	\newtheorem{theorem}{Theorem}[section]
	\newtheorem{corollary}{Corollary}[section]
	\newtheorem{lemma}{Lemma}[section]
	\newtheorem{definition}{Definition}[section]
	\newtheorem{remark}{Remark}[section]
	\newtheorem{example}{Example}[section]
	\newtheorem{prob}{Problem}[section]
	\DeclareMathOperator{\tr}{trace}
	\DeclareMathOperator{\spec}{spec}
		\DeclareMathOperator{\diag}{diag}
				\DeclareMathOperator{\rank}{rank}
       \renewcommand{\Re}{\operatorname{Re}}
	\title{Bounds for the energy of a complex unit gain graph\footnote{This paper is dedicated  to Professor  Ravindra Bhalchandra Bapat on the occasion of his 65th birthday with much admiration. }}
\author{Aniruddha Samanta \thanks{Department of Mathematics, Indian Institute of Technology Kharagpur, Kharagpur 721302, India. Email: aniruddha.sam@gmail.com}\  \and M. Rajesh Kannan\thanks{Department of Mathematics, Indian Institute of Technology Kharagpur, Kharagpur 721302, India. Email: rajeshkannan@maths.iitkgp.ac.in, rajeshkannan1.m@gmail.com }
}
\date{\today}
\begin{document}
\maketitle
\baselineskip=0.25in

\begin{abstract}
	A $\mathbb{T}$-gain graph, $\Phi = (G, \varphi)$, is a  graph in which the function $\varphi$ assigns a unit complex number to each orientation of an edge, and its inverse is assigned to the opposite orientation.  The associated adjacency matrix $ A(\Phi) $ is defined canonically. The energy $ \mathcal{E}(\Phi) $ of a $ \mathbb{T} $-gain graph  $ \Phi $ is the sum of the absolute values of all eigenvalues of $ A(\Phi) $.
	We study the notion of energy of a vertex of a $ \mathbb{T} $-gain graph, and establish bounds for it. For any $ \mathbb{T} $-gain graph $ \Phi$, we prove that $2\tau(G)-2c(G) \leq \mathcal{E}(\Phi) \leq 2\tau(G)\sqrt{\Delta(G)}$, where $ \tau(G), c(G)$ and $ \Delta(G)$ are the vertex cover number, the number of odd cycles and the largest vertex degree of $ G $, respectively. Furthermore, using the properties of vertex energy, we characterize the classes of  $ \mathbb{T} $-gain graphs for which   $ \mathcal{E}(\Phi)=2\tau(G)-2c(G) $ holds. Also,
	we characterize the classes of  $ \mathbb{T} $-gain graphs for which  $\mathcal{E}(\Phi)= 2\tau(G)\sqrt{\Delta(G)} $ holds.  This characterization solves a general version of an open problem. In addition, we establish  bounds for the energy  in terms of the spectral radius of the associated adjacency matrix.
\end{abstract}

{\bf AMS Subject Classification(2010):} 05C50, 05C22, 05C35.
\section{Introduction}
In a simple undirected graph  $ G $ with vertex set $ V(G)=\{  v_1, \dots, v_n\} $ and edge set $ E(G) $,  if two vertices $ v_p $ and  $ v_q $ are adjacent in $G$, then we write $ v_p \sim v_q $, and the edge in between them is denoted by $ e_{p,q} $. The number of vertices adjacent with the vertex $ v_p $, the \emph{degree} of $ v_p $, is denoted by $ d(v_p) $ (or simply $ d_p $).  $ \Delta(G) $ denotes the maximum vertex degree of $ G $.
   A \emph{directed graph(or digraph)} $ X $ is an order pair $ (V(X), E(X)) $, where\break $ V(X)=\{ v_{1}, v_{2}, \dots,v_{n}\} $ is the vertex set and $ E(X) $ is the directed edge set. A directed edge from the vertex $ v_{p} $ to the vertex $ v_{q} $ is denoted by $ \overrightarrow{e_{p,q}} $. If $ \overrightarrow{e_{p,q}} \in E(X)$ and $  \overrightarrow{e_{p,q}}\in E(X)$, then the pair $ \{v_{p},v_{q}\} $ is called a \emph{digon} of $ X $. The  \textit{Hermitian adjacency matrix} \cite{Bojan, Lie} of a digraph $ X $ is  denoted by $H(X)$ and is defined as follows:
    $$\mbox{$ (p,q){th}$ entry of }H(X)=h_{p,q}=\begin{cases}
    1& \text{if both } \mbox{$\overrightarrow{e_{p,q}}$ \text{and} $\overrightarrow{e_{q,p}} \in E(X)$},\\
    i& \text{if  } \mbox{$\overrightarrow{e_{p,q}} \in E(X)$ \text{and} $\overrightarrow{e_{q,p}} \notin E(X)$},\\
    -i& \text{if  } \mbox{$\overrightarrow{e_{p,q}} \notin E(X)$ \text{and} $\overrightarrow{e_{q,p}} \in E(X)$},\\
    0&\text{otherwise.}\end{cases}$$
        The Hermitian adjacency matrix can be thought of as the adjacency matrix of a $\mathbb{T}$-gain graph with the gains are from the set $ \{1, \pm i\}$. A digraph is said to be an \emph{oriented graph} if it has no digons. A graph contains both directed and undirected edges is called a \emph{mixed graph} and it is denoted by $ D_G $, where $ G $ is the underlying simple graph. When we consider Hermitian adjacency matrix, $ H(D_G) $  of a mixed graph $ D_{G} $, the undirected edges are treated as digons.

 From a simple graph $G$, by orienting each undirected edge $ e_{p,q} \in E(G)$ in two opposite directions, namely $ \overrightarrow{ e_{p,q}}$ and $ \overrightarrow{e_{q,p}}$, we get a digraph.  Let $ \overrightarrow{E(G)}=\{\overrightarrow{ e_{p,q}}, \overrightarrow{e_{q,p}}: e_{p,q}\in E(G) \} $ and $ \mathbb{T}=\{ z \in \mathbb{C}: |z|=1\} $. A \emph{complex unit gain graph} (simply, $ \mathbb{T} $-gain graph) on a simple graph $ G $ is a pair $ (G, \varphi) $, where $ \varphi: \overrightarrow{E(G)} \rightarrow \mathbb{T} $ is a mapping  such that $ \varphi( \overrightarrow{e_{p,q}}) =\varphi(\overrightarrow{e_{q,p}})^{-1}$. A $ \mathbb{T} $-gain graph $ (G, \varphi) $ is  denoted by $ \Phi $.  For more details about the $\mathbb{T}$-gain graphs, we refer to \cite{Our-gain1, reff1,Reff2016, Our-paper-2, Zas4}.

The \emph{ adjacency matrix} of $ \Phi$ is the Hermitian  matrix $ A(\Phi)=(a_{p,q})_{n \times n }$  defined as follows:
    $$a_{p,q}=\begin{cases}
    \varphi(\overrightarrow{e_{p,q}})&\text{if } \mbox{$v_p\sim v_q$},\\
    0&\text{otherwise.}\end{cases}$$

    Let $ \{\lambda_1, \dots, \lambda_n\} $ be the spectrum of $ A(\Phi) $ (or the spectrum of $ \Phi $), and is denoted by $\spec(\Phi)$. The energy of $ \Phi $, denoted by  $ \mathcal{E}(\Phi) $, is defined by $\sum\limits_{j=1}^{n}|\lambda_j| $.

    For a vertex $ v_j $ of $ G $, the \emph{energy of the vertex $ v_j $}, denoted by $ \mathcal{E}_{G}(v_j) $, is defined by $ \mathcal{E}_{G}(v_j)=|A(G)|_{jj} $, where $ |A(G)|=( A(G)A(G)^{*} )^{\frac{1}{2}}$ and $ |A(G)|_{jj} $ is the $ (j,j)$-th entry of $ |A(G)| $. Then $ \mathcal{E}(G)=\sum\limits_{j=1}^{n}\mathcal{E}_{G}(v_j) $ \cite{Vertexenergy1}. In Section \ref{ver-ener}, we establish bounds for $\mathcal{E}_{\Phi}(v_j) $, the vertex energy of  a $\mathbb{T}$-gain graph, in terms degree of the vertex $v_j$, and characterize the classes of graphs for which the bounds are sharp. As a consequence of these bounds, we provide a couple of bounds for the energy of a $\mathbb{T}$-gain graph in terms of the energy of the underlying graph and the number of vertices of the graph.

    A \emph{matching} in a graph $ G $ is a set of edges of $ G $ such that no two edges are incident with the same vertex. The cardinality of a matching with the maximum number of edges is the \emph{matching number} of $ G $, and is denoted by $ \mu(G) $. A matching that saturates all the vertices of $ G $ is known as a \emph{perfect matching} of $ G $. A \emph{vertex cover} $ U $ of a graph $ G $ is a subset of $ V(G) $ such that every edge  of $ G $ is incident with at least one vertex of $ U $. The cardinality of a vertex cover with the minimum number of vertices is the \emph{vertex cover number} of $ G $, and is denoted by $ \tau(G)$. For any $ \mathbb{T} $-gain graph $ \Phi=(G, \varphi) $, the matching number, and the vertex cover number of $ \Phi $ are the matching number and the vertex cover number of the underlying graph $ G $, respectively.

     In \cite{Wang-Ma}, the authors derived a lower bound for the  energy of an undirected graph in terms of the vertex cover number and the number of odd cycles.
    \begin{theorem}[{\cite[Theorem 4.2]{Wang-Ma}}]\label{lm.15}
    If $ G $ is a graph with the vertex cover number $ \tau(G) $ and the number of odd cycle $ c(G) $, then $ \mathcal{E}(G)\geq 2\tau(G)-2c(G) $. Equality occurs if and only if each component of $ G $ is a complete bipartite graph with perfect matching together with some isolated vertices.
    \end{theorem}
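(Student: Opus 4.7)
The plan is to establish the lower bound $\mathcal{E}(G) \geq 2\tau(G) - 2c(G)$ by chaining two simpler inequalities: a spectral inequality $\mathcal{E}(G) \geq 2\mu(G)$ relating the energy to the matching number, and a combinatorial inequality $\mu(G) \geq \tau(G) - c(G)$. The spectral inequality is a classical result that follows from Coulson's integral formula together with a coefficient-wise comparison of the characteristic polynomial of $A(G)$ and the matching polynomial of $G$; it can be taken as an established ingredient.

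For the combinatorial inequality, I would choose one edge from each odd cycle of $G$ to form a set $F$ with $|F| \leq c(G)$. Any odd cycle in $G - F$ would be an odd cycle of $G$ whose chosen edge lies in $F$, contradicting its presence in $G - F$; hence $G - F$ is bipartite. Since a vertex cover of $G - F$ augmented by one endpoint of each edge of $F$ is a vertex cover of $G$, one obtains $\tau(G - F) \geq \tau(G) - |F| \geq \tau(G) - c(G)$. Every matching in $G - F$ is a matching in $G$, so applying K\"onig's theorem to the bipartite graph $G - F$ gives
\[
\mu(G) \;\geq\; \mu(G - F) \;=\; \tau(G - F) \;\geq\; \tau(G) - c(G).
\]
Combining this with $\mathcal{E}(G) \geq 2\mu(G)$ yields the desired lower bound.

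For the equality characterization, equality in the chained inequality forces both $\mathcal{E}(G) = 2\mu(G)$ and $\mu(G) = \tau(G) - c(G)$. The known structural characterization of the first equality states that $G$ must be a disjoint union of complete bipartite graphs with perfect matchings together with isolated vertices. Such graphs are bipartite, so $c(G) = 0$ and $\tau(G) = \mu(G)$ by K\"onig, and the second equality holds automatically. Conversely, for any $G$ of this form, a direct spectral computation for each component $K_{a,a}$ (whose eigenvalues are $\pm a$ with multiplicity one and $0$ with multiplicity $2a - 2$) gives $\mathcal{E}(G) = 2\sum_i a_i = 2\tau(G) = 2\tau(G) - 2c(G)$, closing the equivalence.

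The main obstacle is the forward direction of the equality characterization $\mathcal{E}(G) = 2\mu(G)$: one must show that strict inequality holds whenever some component is not a complete bipartite graph with perfect matching. This requires a careful analysis of when the Coulson integral bound is attained with equality, typically by identifying a forbidden substructure (such as a pendant edge or an odd cycle) whose presence creates a strict gap between the coefficients of the characteristic and matching polynomials, and hence between $\mathcal{E}(G)$ and $2\mu(G)$.
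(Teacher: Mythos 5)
First, a point of context: the paper does not prove this statement at all --- it is quoted verbatim from Wang--Ma as background --- and the paper's own contribution is the generalization to $\mathbb{T}$-gain graphs (Theorem \ref{th8}), proved by induction on $c(G)$: delete a vertex lying on an odd cycle, use strict monotonicity of energy under removal of the incident cut set (Lemma \ref{lm13}), and reduce to the bipartite case where $\tau(G)=\mu(G)$ and the matching bound applies. Your route to the \emph{inequality} is genuinely different and, as far as it goes, correct: the combinatorial lemma $\mu(G)\ge\tau(G)-c(G)$, obtained by deleting one edge from each odd cycle and applying K\"onig's theorem to the resulting bipartite graph $G-F$, is a clean and valid reduction, and chaining it with $\mathcal{E}(G)\ge 2\mu(G)$ does yield the lower bound. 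One caveat even here: for non-bipartite $G$ the inequality $\mathcal{E}(G)\ge 2\mu(G)$ is \emph{not} obtainable by the Coulson-integral/quasi-order comparison of the characteristic and matching polynomials that you invoke; that technique requires sign control of the characteristic-polynomial coefficients which fails outside the bipartite setting. The general-graph statement is Theorem \ref{lm.16} (equivalently Theorem \ref{th4} here), and its known proofs use quite different cut-set and induction arguments, so this ingredient must be cited as a theorem rather than derived as you describe.

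The genuine gap is in the equality characterization. Your argument correctly reduces equality to the assertion that $\mathcal{E}(G)=2\mu(G)$ forces every component to be a complete bipartite graph with a perfect matching or an isolated vertex; you first invoke this as ``known'' and then concede it is ``the main obstacle,'' offering only a sketch. That assertion is the entire content of the hard direction, and it splits into two nontrivial pieces: (i) strict inequality $\mathcal{E}(G)>2\mu(G)$ for every connected non-bipartite graph, and (ii) the identification of connected bipartite graphs with $\mathcal{E}(G)=2\mu(G)$ as exactly the $K_{a,a}$. Your proposed mechanism --- locating a forbidden substructure that forces a coefficient gap between the characteristic and matching polynomials --- does not address (i) at all, for the same reason as above: in the non-bipartite case the Sachs-type cycle contributions destroy the coefficient comparison. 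The paper handles (i) by passing to the bipartite double $\Phi\otimes K_2$ (Lemma \ref{lm15}) and (ii) by a separate structural induction using cut-set energy strictness (Theorem \ref{th6}, building on Lemmas \ref{lm13}--\ref{lm8}); Wang--Ma's Lemma 4.1 (Lemma \ref{lm.14}) covers only the bipartite case. Without supplying proofs, or at least precise citations, of both (i) and (ii), the equality part of your proposal is incomplete.
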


     In \cite{Wei-Li}, the authors extended  Theorem \ref{lm.15} for Hermitian adjacency matrices of mixed graphs.

     \begin{theorem}[{\cite[Theorem 4.5]{Wei-Li}}] \label{th2}
      Let $ D_G $ be a mixed graph with vertex cover number $ \tau(G) $ and number of odd cycles $ c(G) $. Then $ \mathcal{E}_{H}(D_G) \geq 2\tau(G)-2c(G) $. Equality occurs if and only if $ D_G $ is switching equivalent to its underlying graph $ G $, where each component of $ G $ is either a complete bipartite graph with equal partition size or isolated vertices.
     \end{theorem}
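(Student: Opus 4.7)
\medskip
\noindent\textbf{Proof plan.} The strategy splits into three stages: a matching-based lower bound on the Hermitian energy, a combinatorial comparison between the matching number and the vertex cover number, and a rigidity analysis for the equality case.

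First, I would show that for every mixed graph $D_G$ one has $\mathcal{E}_H(D_G)\geq 2\mu(G)$. The argument uses the trace-norm/operator-norm duality $\|H\|_{*}=\sup_{\|X\|_{\mathrm{op}}\leq 1}|\tr(XH)|$, where $\|H\|_{*}=\sum_j|\lambda_j|=\mathcal{E}_H(D_G)$ for Hermitian $H$. Given a maximum matching $\{e_1,\dots,e_{\mu(G)}\}$ with $e_\ell=\{u_\ell,v_\ell\}$, I would construct a Hermitian matrix $X$ that agrees with $H(D_G)$ on the matching edges and vanishes elsewhere. After reordering, $X$ is a block-diagonal sum of $2\times 2$ Hermitian blocks whose off-diagonal entries are unit complex numbers, so each block has spectrum $\{+1,-1\}$. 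Hence $\|X\|_{\mathrm{op}}=1$ and $\tr(X H(D_G))=2\mu(G)$, which immediately yields $\mathcal{E}_H(D_G)\geq 2\mu(G)$.

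Second, I would invoke the combinatorial inequality $\mu(G)\geq\tau(G)-c(G)$, which can be obtained by deleting one vertex from each cycle in a carefully chosen vertex-disjoint family of odd cycles and applying K\"onig's theorem to the resulting bipartite subgraph. Combined with the first step, this gives $\mathcal{E}_H(D_G)\geq 2\mu(G)\geq 2\tau(G)-2c(G)$, which is the required inequality.

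The main obstacle is the equality characterization. Equality would force saturation of both inequalities: $\mu(G)=\tau(G)-c(G)$ and $\mathcal{E}_H(D_G)=2\mu(G)$. The latter, via the duality argument, constrains the nonzero spectrum of $H(D_G)$ to be $\pm 1$ with multiplicity $\mu(G)$ each, and pins the eigenspaces to align rigidly with the $2\times 2$ blocks coming from the matching. Propagating this rigidity along alternating walks through the graph should force every cycle of $D_G$ to have trivial gain, so $D_G$ is balanced and hence switching equivalent to its underlying graph $G$; since switching preserves the spectrum, one can then invoke Theorem \ref{lm.15} on $G$ itself to identify each component as a complete bipartite graph with equal part sizes or an isolated vertex. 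Verifying the consistency of the two saturation conditions with this component structure, and checking sufficiency in the other direction, will be the most delicate step.
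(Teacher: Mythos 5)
First, a remark on what to compare against: the paper does not prove this statement itself (it is quoted from Wei--Li as Theorem \ref{th2}); what the paper proves is the strictly more general Theorem \ref{th8} for $\mathbb{T}$-gain graphs, so your proposal should be measured against that proof and the chain of lemmas supporting it.

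Your two inequality stages are correct and take a genuinely different route. For $\mathcal{E}_H(D_G)\ge 2\mu(G)$ the paper (Theorem \ref{th4}) inducts on $\mu(G)$, peeling off a matching edge and using the cut-set monotonicity of Lemma \ref{lm6}, which in turn rests on the block-matrix energy inequality of Theorem \ref{lm11}; your trace-norm duality certificate, with $X$ supported on a maximum matching, $\|X\|_{\mathrm{op}}=1$ and $\tr(XH(D_G))=2\mu(G)$, gives the same bound in one step and is arguably cleaner (it is von Neumann's inequality, Theorem \ref{th1}, used as a lower-bound device, much as the paper does in Theorems \ref{th-4.1} and \ref{lm2}). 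For the passage to $2\tau(G)-2c(G)$ the paper runs a second induction on $c(G)$, deleting a vertex of an odd cycle and invoking the strict inequality of Lemma \ref{lm13}; your combinatorial inequality $\tau(G)\le\mu(G)+c(G)$ achieves the same, though your phrasing needs repair: deleting one vertex from a vertex-disjoint family of odd cycles need not destroy all odd cycles. The correct elementary argument is to pick one vertex from \emph{every} odd cycle, obtaining a set $S$ with $|S|\le c(G)$ such that $G-S$ is bipartite, whence $\tau(G)\le\tau(G-S)+|S|=\mu(G-S)+|S|\le\mu(G)+c(G)$ by K\"onig.

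The genuine gap is the equality characterization, which is where essentially all the work in the paper lies. Your intermediate claim that $\mathcal{E}_H(D_G)=2\mu(G)$ forces the nonzero spectrum to be $\pm1$ with multiplicity $\mu(G)$ each is false: the extremal graphs $K_{p,p}$ have spectrum $\{p,0,\dots,0,-p\}$. What saturation of your duality bound actually yields is $\sum_{i>2\mu(G)}\sigma_i(H)=0$, i.e.\ $\rank H(D_G)\le 2\mu(G)$, together with the requirement that eigenvectors of nonzero eigenvalues lie in the $\pm1$-eigenspaces of $X$; this is a useful rigidity statement, but ``propagating it along alternating walks'' is not yet an argument, and it is precisely here that the difficulty sits. (Reducing to the undirected Theorem \ref{lm.15} once balancedness is established is legitimate, but proving that equality forces balancedness \emph{and} the complete-bipartite component structure is the whole problem.) For its generalization the paper needs Lemmas \ref{lm4}, \ref{lm5}, \ref{lm7}, \ref{lm8}, the structural induction of Theorem \ref{th6} (excluding induced $P_4$'s and the configuration of Figure \ref{fig2}), the bipartite-double Kronecker argument of Lemma \ref{lm15} to rule out non-bipartite components, and Theorem \ref{th5} to pin down the gain. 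Your plan needs a comparable amount of substance at this point before it can be considered a proof.
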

     Further extensions of  Theorem \ref{lm.15}  are discussed in \cite{Tian-Wong, Wong-Wang-Chu}.

     In Section \ref{lower-bound}, we obtain  lower bounds for  $ \mathcal{E}(\Phi) $ in terms  of the gains of fundamental cycles [Theorem \ref{th-4.1} and Theorem \ref{lm2}]. We show that a connected $\mathbb{T}$-gain bipartite graph has exactly one positive eigenvalue if and only if it is the balanced complete bipartite graph [Theorem \ref{lem5}].  We establish a bound for the energy of a $\mathbb{T}$-gain graph in terms of the spectral radius of $ \Phi $, and characterize the sharpness of the inequality [Theorem \ref{Th6}]. Further, we establish lower bounds for  $ \mathcal{E}(\Phi) $ in terms of the vertex cover number, the number of odd cycles, and the matching number [Theorem \ref{th4(ii)} and Theorem \ref{th8}]. After completion of this work, we learned that Theorem \ref{th4(ii)} has been proved in \cite{gain-ver-cov-new} independently. However, our proof uses the properties of vertex energy of $\mathbb{T}$-gain graphs, and hence the proof is different from the proof given in \cite{gain-ver-cov-new}.

      In \cite{Wang-Ma}, the authors established an upper bound of the energy of an undirected graph in terms of the vertex cover number and the largest vertex degree.

         \begin{theorem}
         \cite[Theorem 3.1]{Wang-Ma}\label{th-1.3} If $ G $ is an undirected
         graph with vertex cover number $ \tau(G) $ and maximum vertex degree $
         \Delta(G)$, then $ \mathcal{E}(G)\leq 2\tau(G)\sqrt{\Delta(G)} $. Equality occurs if and only if $ G $ is the disjoint union of $ \tau(G) $ copies of $ K_{1, \Delta(G)} $ together with some isolated vertices.
         \end{theorem}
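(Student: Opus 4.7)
The plan is to combine a rank bound induced by a minimum vertex cover with the Cauchy--Schwarz inequality on the non-zero eigenvalues of $A(G)$, and then to track the equality conditions carefully enough to force the disjoint-star decomposition.

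First, fix a minimum vertex cover $U\subseteq V(G)$ with $|U|=\tau(G)$ and order the vertices so that $U$ comes first. Since $V(G)\setminus U$ is independent, the adjacency matrix decomposes as
\[
A(G)=\begin{pmatrix}B & C\\ C^{T} & 0\end{pmatrix},\qquad B\in\mathbb{R}^{\tau(G)\times\tau(G)}.
\]
Splitting this into its ``$U$-rows'' and ``$(V\setminus U)$-rows'', i.e.\ $A(G)=R_{1}+R_{2}$ with $R_{1}=\bigl(\begin{smallmatrix}B & C\\ 0 & 0\end{smallmatrix}\bigr)$ and $R_{2}=\bigl(\begin{smallmatrix}0 & 0\\ C^{T} & 0\end{smallmatrix}\bigr)$, each summand has rank at most $\tau(G)$, so $\rank A(G)\leq 2\tau(G)$. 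In particular $A(G)$ has at most $2\tau(G)$ non-zero eigenvalues.

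Next, applying the Cauchy--Schwarz inequality to the non-zero eigenvalues of $A(G)$,
\[
\mathcal{E}(G)=\sum_{\lambda_{i}\neq 0}|\lambda_{i}|\;\leq\;\sqrt{\rank A(G)}\cdot\sqrt{\sum_{i}\lambda_{i}^{2}}\;\leq\;\sqrt{2\tau(G)}\cdot\sqrt{\tr A(G)^{2}}.
\]
The trace is $\tr A(G)^{2}=\sum_{v}d(v)=2|E(G)|$. Because every edge is incident to at least one vertex of $U$, we further have $|E(G)|\leq\sum_{v\in U}d(v)\leq \tau(G)\Delta(G)$, yielding
\[
\mathcal{E}(G)\leq\sqrt{2\tau(G)}\cdot\sqrt{2\tau(G)\Delta(G)}=2\tau(G)\sqrt{\Delta(G)}.
\]

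For the equality case, every inequality above must be tight. Tightness of $|E(G)|\leq\sum_{v\in U}d(v)$ forces $U$ to be independent, so $G$ is bipartite with parts $U$ and $V\setminus U$; tightness of $\sum_{v\in U}d(v)\leq\tau(G)\Delta(G)$ forces each vertex of $U$ to have degree exactly $\Delta(G)$. Tightness of Cauchy--Schwarz forces all non-zero eigenvalues of $A(G)$ to have a common absolute value $\sigma$, and the rank to be exactly $2\tau(G)$. Since $G$ is bipartite the non-zero eigenvalues come in pairs $\pm\sigma$, and $2\tau(G)\sigma^{2}=2\tau(G)\Delta(G)$ gives $\sigma=\sqrt{\Delta(G)}$. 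Translated to the biadjacency matrix this reads $CC^{T}=\Delta(G)\,I_{\tau(G)}$: the rows of the $0/1$ matrix $C$ are pairwise orthogonal and each contains exactly $\Delta(G)$ ones, so the neighborhoods in $V\setminus U$ of the vertices of $U$ are pairwise disjoint and of size $\Delta(G)$. Therefore $G$ is the disjoint union of $\tau(G)$ copies of $K_{1,\Delta(G)}$ (centred at the vertices of $U$) together with the remaining isolated vertices of $V\setminus U$, and the converse follows from $\mathcal{E}(K_{1,\Delta})=2\sqrt{\Delta}$ and additivity of energy over components. The main obstacle is this equality analysis: extracting the disjoint-star decomposition from the combination of the bipartite structure, the single-value constraint on the singular values of $C$, and the $0/1$ entry constraint is where one must be careful, whereas the upper bound itself follows from the short Cauchy--Schwarz argument enabled by the vertex-cover rank bound.
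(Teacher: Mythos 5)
Your proof is correct, and it takes a genuinely different route from the one the paper follows for this bound (the paper cites the undirected statement from Wang--Ma and proves the generalization to $\mathbb{T}$-gain graphs in Theorems \ref{th-5.3} and \ref{th-5.4}). The paper's upper bound is obtained by induction on $\tau(G)$: one peels a star $K_{1,d}$ off a cover vertex, writes $A(\Phi)$ as a sum of the star's adjacency matrix and the rest, and applies the singular-value subadditivity of Theorem \ref{lm2.15}; the equality case is then extracted by forcing every inequality in the induction to be tight, proving bipartiteness, and using the rank-$2$ characterization of Theorem \ref{lem5} on each component. Your argument is direct and non-inductive: the decomposition $A(G)=R_1+R_2$ gives $\rank A(G)\le 2\tau(G)$, and Cauchy--Schwarz together with $2|E(G)|\le 2\tau(G)\Delta(G)$ yields the bound in one line; your equality analysis correctly recovers independence of $U$, the degree condition, and $CC^{T}=\Delta(G)I_{\tau(G)}$, from which the disjoint stars follow because orthogonality of $0/1$ rows forces disjoint supports. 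The trade-off is worth noting: your route is shorter and more elementary for undirected graphs, but the final step is exactly where it fails to generalize --- for gain graphs the rows of $C$ have unimodular entries and orthogonality no longer forces disjoint neighborhoods (e.g.\ $(1,1)$ and $(1,-1)$), which is why the paper must route the equality case through the spectral characterization of balanced complete bipartite gain graphs rather than through $CC^{*}=\Delta I$. The paper itself uses your Cauchy--Schwarz-plus-rank idea internally, in the rank-$2$ step of the proof of Theorem \ref{th-5.4}, so the two arguments share a core ingredient while packaging it differently.
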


         In \cite{Wei-Li}, the authors extended this inequality for a mixed graph and proposed the equality part as an open problem.

         \begin{theorem}    [{\cite[Theorem 4.9]{Wei-Li}}] \label{th-1.4}
         Let $ D_G $ be a mixed graph with vertex cover number $ \tau(G) $ and largest vertex degree $
         \Delta(G) $. Then
         \begin{equation}\label{eq1}
         \mathcal{E}_{H}(D_G)\leq 2\tau(G)\sqrt{\Delta(G)}.
         \end{equation}
         \end{theorem}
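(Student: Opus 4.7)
The plan is to prove the bound via a Cauchy--Schwarz inequality applied to the nonzero eigenvalues of $H(D_G)$, combined with a rank bound coming from the vertex cover. Fix a minimum vertex cover $U \subseteq V(G)$ with $|U|=\tau(G)$. Since $\bar{U} := V(G)\setminus U$ is an independent set, ordering vertices so that those in $U$ come first, the Hermitian adjacency matrix has the block form
$$H(D_G)=\begin{pmatrix} B & C \\ C^{*} & 0 \end{pmatrix},$$
where $B$ is $|U|\times|U|$ Hermitian, $C$ is $|U|\times|\bar{U}|$, and the bottom-right zero block is forced by the independence of $\bar{U}$.

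The key step is to bound $\rank H(D_G)$. Split $H(D_G)=H_{1}+H_{2}$, where $H_{1}$ keeps the rows indexed by $U$ and zeros out the rest, and $H_{2}$ keeps only the rows indexed by $\bar{U}$. Then $\rank H_{1}\leq |U|$ since $H_{1}$ has only $|U|$ nonzero rows, and $\rank H_{2}\leq |U|$ since every row of $H_{2}$ is supported on the first $|U|$ coordinates. Hence $\rank H(D_G)\leq 2\tau(G)$, so $H(D_G)$ has at most $2\tau(G)$ nonzero eigenvalues.

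Next I would compute $\|H(D_G)\|_{F}^{2}$. Every nonzero entry of $H(D_G)$ has modulus one, so each edge $e_{pq}$ of $G$ contributes $|h_{pq}|^{2}+|h_{qp}|^{2}=2$, giving $\sum_{i}\lambda_{i}^{2}=\|H(D_G)\|_F^2=2|E(G)|$. Since each edge has at least one endpoint in $U$ and each vertex in $U$ has degree at most $\Delta(G)$, we obtain $|E(G)|\leq \tau(G)\Delta(G)$.

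Finally, applying the Cauchy--Schwarz inequality to the (at most) $2\tau(G)$ nonzero eigenvalues yields
$$\mathcal{E}_H(D_G)=\sum_{i}|\lambda_{i}|\leq \sqrt{\rank H(D_G)}\cdot\sqrt{\sum_{i}\lambda_{i}^{2}}\leq \sqrt{2\tau(G)}\cdot\sqrt{2\tau(G)\Delta(G)}=2\tau(G)\sqrt{\Delta(G)}.$$
The main obstacle is the rank bound; the remaining steps are routine edge counting and Cauchy--Schwarz. Since the argument uses only the zero pattern of $H(D_G)$ and the unit moduli of its nonzero entries, the same template should extend verbatim to the $\mathbb{T}$-gain version of the inequality promised in the abstract, with $A(\Phi)$ replacing $H(D_G)$.
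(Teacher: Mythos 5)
Your proof is correct, and it takes a genuinely different route from the paper. The paper does not prove Theorem \ref{th-1.4} directly (it is quoted from Wei--Li); what it proves is the $\mathbb{T}$-gain generalization, Theorem \ref{th-5.3}, and it does so by induction on $\tau(G)$: pick a vertex $x$ of a minimum vertex cover, write $A(\Phi)$ as the sum of the adjacency matrix of the star $K_{1,d}$ at $x$ and that of $\Phi-x$, apply the singular-value subadditivity of Theorem \ref{lm2.15} to get $\mathcal{E}(\Phi)\le 2\sqrt{d}+\mathcal{E}(\Phi-x)$, and use $\tau(G-x)=\tau(G)-1$. Your argument is non-inductive: the row-support splitting $H=H_1+H_2$ gives $\rank H(D_G)\le 2\tau(G)$, the unit moduli give $\sum_i\lambda_i^2=2|E(G)|\le 2\tau(G)\Delta(G)$, and Cauchy--Schwarz over the nonzero spectrum (legitimate, since for a Hermitian matrix the rank equals the number of nonzero eigenvalues) closes the estimate; and, as you say, nothing beyond the zero pattern and unit moduli is used, so the same computation proves Theorem \ref{th-5.3} verbatim. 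It is worth noting that the paper does deploy exactly your three ingredients --- a rank bound of $2\tau$, the count $|E|\le\tau\Delta$, and Cauchy--Schwarz --- but only locally, inside the equality analysis of Theorem \ref{th-5.4}, and there the rank bound is obtained via matchings (Lemma \ref{2-lm5} together with K\"onig's theorem for bipartite graphs) rather than by your more elementary row splitting. What the paper's peeling decomposition buys is a scaffold for the equality characterization: equality forces equality at every inductive step, which immediately yields that every cover vertex has degree $\Delta(G)$ and that no two cover vertices are adjacent. Your route gets the inequality more cheaply, but characterizing equality would require a separate analysis of when the rank bound, the edge count, and Cauchy--Schwarz are simultaneously tight.
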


         In Section \ref{upper-bound}, we extend Theorem \ref{th-1.4} for the $\mathbb{T}$-gain graphs [Theorem \ref{th-5.3}].

         \begin{prob}[{\cite[Problem 4.1]{Wei-Li}}]\label{problem1}
          Characterize all mixed graphs which make the equality in \eqref{eq1}  hold.
         \end{prob}

         We solve this  problem for the $\mathbb{T}$-gain graphs [Theorem \ref{th-5.4}]. The Hermitian adjacency matrices of mixed graphs are particular cases of the adjacency matrices of the $\mathbb{T}$-gain graphs. Also, in a recent manuscript \cite{gain-ver-cov-new}, the author mentioned the difficulties in extending  Theorem \ref{th-1.4}, and characterizing the graphs for which equality hold for the $\mathbb{T}$-gain graphs.

         This article is organized as follows: In Section \ref{prelim}, we collect needed known definitions and results. In Section \ref{ver-ener}, we extend the notion of vertex energy for $\mathbb{T}$-gain graphs, and establish some of the properties.  In Section \ref{lower-bound}, we establish various lower bounds for the energy of $\mathbb{T}$-gain graphs, and  Section \ref{upper-bound} is devoted to upper bounds for the energy of $\mathbb{T}$-gain graphs.

\section{Definitions, notation and preliminary results}\label{prelim}

In this section we recall some of the needed graph theory and linear algebra terminologies and some of the basic results.
A subgraph $ H $ of a graph $ G $ is  an \emph{induced subgraph} if  two vertices of $ H $ are adjacent in $ G $, then they are adjacent in $ H $. For an induced subgraph $ H $ of $G$ the \emph{complement} of $ H $  in $ G $, denoted by $G-H$,  defined as the induced subgraph of $ G $ with vertex set $ V(G)\setminus V(H) $. The subgraphs  $ H $ and $ G-H $ are called \emph{complementary induced subgraphs} in $ G $. If $ E $ is any edge set of $ G $, then $ G-E $ denotes the spanning subgraph of $ G $ with edge set $ E(G)\setminus E $ and vertex set $ V(G) $. A \emph{cut} of a graph $ G $ is a partition of the vertex set $ V(G) $ into two sets $ U $ and $ W $. A \emph{cut set} of $ G $ is a set of edges $ \{e_{p,q}\in E(G): v_p \in U, v_q \in W\} $, where $U$ and $V$ partition the vertex set $V(G)$. Suppose $ E $ is a cut set, then there are two induced subgraphs $ H $ and $ G-H $ complement to each other such that each edge of $ E $ is incident to a vertex of  $ H $ and to another vertex of $ G-H $ \cite{Day-So}. Then we denote $ H \oplus (G-H)=G-E $.

Let $ e_{p,q} \in E(G) $. To avoid confusion, we denote $ G-[e_{p,q}] $ as an induced subgraph of $ G $ whose vertex set is $ V(G)\setminus\{v_p, v_q\} $.  If $ K$ is a spanning subgraph of $ G $, then for any edge $ e\in E(G)\setminus E(K) $, $ K+e $ denotes a spanning subgraph of $ G $ with the edge set $ E(K)\cup\{e\} $. If   $ G $ is a connected graph  and $T$ is a spanning tree  of $ G $, then  any edge $ e\in E(G)\setminus E(T) $ induces a unique cycle in $ T+e $.  This is called a \emph{fundamental cycle} in $ G $ with respect to $ T $.

The \textit{adjacency matrix} of a simple graph $ G $, denoted by $ A(G) $,  is the symmetric  $ n \times n $  matrix whose  $ (p,q)th $ entry  is defined by $ a_{p,q}=1 $ if $ v_{p} \sim v_{q} $, and  $ a_{p,q} = 0 $ otherwise. The energy of the graph $G$, denoted by $\mathcal{E}(G)$, is the sum of the absolute values of  the eigenvalues of $A(G)$.

\begin{lemma}[{\cite[Theorem 3.6]{Day-So}}]\label{lm.13}
Let $ L $ and $ M $ be two complementary induced subgraph of a graph $ G $ and $ E $ be the cut set in between them. If $E$ is not empty and all edges in $E$ are incident to one and only one vertex in $ M $, then $ \mathcal{E}(G-E) < \mathcal{E}(G)$.
\end{lemma}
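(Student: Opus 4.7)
The plan is to compare the energies via the Coulson integral formula
\[
\mathcal{E}(H)=\frac{1}{\pi}\int_{-\infty}^{\infty}\ln\frac{|\phi(H,ix)|}{|x|^{n}}\,dx,
\]
which reduces the desired strict inequality $\mathcal{E}(G-E)<\mathcal{E}(G)$ to the pointwise bound $|\phi(G,ix)|\ge|\phi(G-E,ix)|$ for every real $x$, with strict inequality on a set of positive measure.

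Since every edge of $E$ meets the single vertex $v\in V(M)$, the graph $G-E$ is the disjoint union $L\sqcup M$, so $\phi(G-E,x)=\phi(L,x)\,\phi(M,x)$, and $G$ is the coalescence, at the identified vertex $v'\equiv v$, of $M$ with the auxiliary graph $\tilde L$ obtained from $L$ by attaching a new vertex $v'$ adjacent to the cut-neighbours $u_{1},\dots,u_{d}$ of $v$. Combining the classical coalescence formula with Schwenk's vertex-deletion identity yields the key polynomial identity
\[
\phi(G,x)=\phi(G-E,x)+\phi(M-v,x)\bigl[\phi(\tilde L,x)-x\,\phi(L,x)\bigr].
\]

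With this identity I would translate the comparison to the imaginary axis. Using $|\phi(H,iy)|^{2}=\det\!\bigl(y^{2}I+A(H)^{2}\bigr)=\sum_{k=0}^{n}y^{2(n-k)}\,s_{k}\!\bigl(A(H)^{2}\bigr)$, I would prove the coefficient-wise inequality $s_{k}(A(G)^{2})\ge s_{k}(A(G-E)^{2})$ for every $k$. Strictness already appears at $k=1$: $s_{1}(A(H)^{2})=\operatorname{tr}\!\bigl(A(H)^{2}\bigr)=2|E(H)|$ and the hypothesis $E\ne\emptyset$ gives $|E(G)|>|E(G-E)|$. For general $k$ I would use the Cauchy--Binet identity $s_{k}(A(H)^{2})=\sum_{|S|=|T|=k}\bigl(\det A(H)_{S,T}\bigr)^{2}$ together with a case analysis, driven by the polynomial identity above, of how the off-diagonal blocks of $A(G)_{S,T}$ differ from those of $A(G-E)_{S,T}$; the one-vertex concentration of the cut restricts these blocks to a rigid rank-$1$ (or rank-$2$) form centred at $v$.

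The main obstacle is this last step. A bare nuclear-norm block inequality gives only the non-strict form $\mathcal{E}(G)\ge\mathcal{E}(G-E)$ without any structural assumption on the cut, and equality really can occur if the one-vertex hypothesis is dropped: for $G=C_{4}$ cut into $L\sqcup M=K_{2}\sqcup K_{2}$, both energies equal $4$. The concentration of $E$ at the single vertex $v$ is precisely what forces the extra contributions in the Cauchy--Binet expansion to arrange into non-negative quantities, so that $|\phi(G,iy)|^{2}-|\phi(G-E,iy)|^{2}$ has non-negative coefficients as a polynomial in $y^{2}$, is non-zero by the $k=1$ term, and is therefore strictly positive on an open set. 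The Coulson formula then delivers $\mathcal{E}(G-E)<\mathcal{E}(G)$.
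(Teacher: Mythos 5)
This lemma is quoted from Day and So and the paper does not reprove it; the closest thing to ``the paper's proof'' is its argument for the $\mathbb{T}$-gain analogue, Lemma \ref{lm13}, which runs through Theorem \ref{lm11}: the block singular-value inequality $\mathcal{E}(A)\geq\mathcal{E}(A_{11})+\mathcal{E}(A_{22})$ gives $\mathcal{E}(G-E)\leq\mathcal{E}(G)$ for any cut set at once, and the equality characterization (existence of unitaries $U,V$ making the modified block matrix positive semidefinite) is then shown to be incompatible with a nonempty cut all of whose edges meet one vertex of $M$, by forcing the off-diagonal block $X$ to vanish. Your route is genuinely different, and unfortunately it is a plan rather than a proof: the entire weight of the argument rests on the claimed coefficient-wise domination $s_k\bigl(A(G)^2\bigr)\geq s_k\bigl(A(G-E)^2\bigr)$ for all $k$, and this is exactly the step you leave open (``the main obstacle is this last step''). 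Nothing in the proposal actually uses the one-vertex hypothesis beyond the (correct) observation that it must be used somewhere, as the $C_4=K_2\sqcup K_2$ example shows; the ``rigid rank-$1$ (or rank-$2$) form'' of the cross blocks is named but never exploited, and the coalescence/Schwenk identity you derive is stated and then not fed into the Cauchy--Binet comparison. So the argument as written establishes neither the non-strict inequality nor, a fortiori, strictness; only the $k=1$ term is verified.

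Two further cautions. First, your target inequality is strictly stronger than what the Coulson formula needs (pointwise domination of $|\phi(\cdot,ix)|$ on the imaginary axis would suffice without coefficientwise domination), so you have chosen the hardest version of the hard step; I could not find a small counterexample to the coefficientwise claim under the one-vertex hypothesis, but neither is there any argument for it here, and quasi-order comparisons of this type are known to fail for general edge deletions in non-bipartite graphs. Second, even granting domination, you should say explicitly that $G$ and $G-E$ share the vertex set so that the difference form of the Coulson integral applies. If you want a complete and short proof, the block-matrix route of Theorem \ref{lm11}/Lemma \ref{lm13} is the one to follow: non-strictness is immediate, and strictness reduces to a finite linear-algebra contradiction driven precisely by the concentration of $E$ at a single vertex.
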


Let $ \Phi=(G, \varphi) $ be any $ \mathbb{T} $-gain graph, and $ H $ be a subgraph of $ G $. We call  $(H , \xi)$  a subgraph of $\Phi$ if the  function $\xi$ is the restriction of $ \varphi $ on $\overrightarrow{E(H)}$, and is denoted by $ (H, \varphi) $ (instead of $(H , \xi)$). If $ H $ is an induced subgraph of $ G $ and $ E $ is any edge set of $ G $, then similar to undirected graphs we can define $ \Phi-H $ and $ \Phi-E $.

The \emph{adjacency matrix} of $ \Phi=(G, \varphi) $, denoted by $ A(\Phi) $, is defined as the Hermitian matrix  whose $ (p,q)$-th element is $ \varphi(\overrightarrow{e_{p,q}}) $ if $ v_p \sim v_q $ and, zero otherwise. The spectrum of $ \Phi $, denoted  by $ \spec(\Phi) $, is  the spectrum of $ A(\Phi) $. The spectral radius of $ \Phi $ is denoted by $ \rho(\Phi) $.	 The
 \textit{energy} of $ \Phi $, denoted by $ \mathcal{E}(\Phi) $,  is defined as $ \mathcal{E}(\Phi)=\sum_{j=1}^{n}|\lambda_j| $, where $\lambda_j$ are the eigenvalues of $\Phi$.
Two $ \mathbb{T} $-gain graphs $ \Phi=(G, \varphi) $ and $ \Phi^{'}=(G, \varphi^{'}) $ are  \emph{switching equivalent} if there exists a unitary diagonal matrix $ U $ such that $ A(\Phi^{'})=UA(\Phi)U^{*} $. If $ \Phi $ and $ \Phi^{'} $ are switching equivalent, then it is denoted by $ \Phi \sim \Phi^{'} $.

 A directed cycle  is called an \emph{oriented cycle} if all of its edges are directed such that each edge is traversed in the same direction. An undirected cycle of $ k $ vertices $ C \equiv v_1-v_2-\dots-v_k-v_1 $   has two oriented cycles. If one of  the orientation, say $ v_1 \rightarrow v_2\rightarrow\dots v_k \rightarrow v_1 $, is denoted by $ \overrightarrow{C} $, then opposite oriented cycle is denoted by $ \overrightarrow{C}^{*} $. The gain of an oriented cycle $ \overrightarrow{C} $ is defined as $ \varphi(\overrightarrow{C})=\varphi(\overrightarrow{e_{1,2}})\varphi(\overrightarrow{e_{2,3}})\cdots \varphi(\overrightarrow{e_{k,1}}) $. Therefore, $ \varphi(\overrightarrow{C}^{*})=\left\{\varphi(\overrightarrow{C}) \right\}^{-1} $.  For any complex number $ \lambda$, $ \Re(\lambda) $ denotes the real part of $ \lambda $. If $ \varphi(\overrightarrow{C}) = \varphi(\overrightarrow{C}^{*})=1$, then we simply write $ \varphi(C)=1 $. Similarly, for any cycle $ C $, $ \Re (\varphi(\overrightarrow{C}))=\Re (\varphi(\overrightarrow{C}^{*})) $. Thus, we simple write $ \Re(\varphi(C)) $.

 A $ \mathbb{T} $-gain graph $ \Phi=(G, \varphi) $ is called \emph{balanced} if $ \varphi(\overrightarrow{C}) =1$, for any cycle $ C $ in $ G $.  If $ \Phi $ is balanced, then $ \Phi \sim (G,1) $. Some of the properties of $\mathbb{T}$-gain graphs are collected in the next couple of results.
\begin{theorem}[{ \cite[Lemma 4.1, Theorem 4.4]{Our-gain1}}] \label{th-2.1}
Let $ \Phi=(G, \varphi) $ be any $ \mathbb{T} $-gain graph on a connected graph $ G $. Then $ \rho(\Phi) \leq \rho(G)$. Equality occur if and only if either $ \Phi $ or $ -\Phi $ is balanced.
\end{theorem}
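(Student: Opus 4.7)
The plan is to prove both inequality and equality parts via an entrywise domination $|A(\Phi)| = A(G)$ combined with Perron--Frobenius applied to the connected (hence irreducible) underlying graph.

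First, I would record the pointwise relation $|a_{p,q}| = 1_{v_p \sim v_q}$, so that $|A(\Phi)|$ (entrywise absolute value) equals $A(G)$. For any eigenvector $\mathbf{y}$ of $A(\Phi)$ with eigenvalue $\lambda$ of maximum modulus, the triangle inequality gives
\begin{equation*}
\rho(\Phi)\,|\mathbf{y}| \;=\; |\lambda \mathbf{y}| \;=\; |A(\Phi)\mathbf{y}| \;\leq\; |A(\Phi)|\,|\mathbf{y}| \;=\; A(G)\,|\mathbf{y}|.
\end{equation*}
Pairing with the Perron eigenvector $\mathbf{x} > 0$ of $A(G)$ (which exists and is unique up to scalars because $G$ is connected) and using $\mathbf{x}^{\top} A(G) = \rho(G)\mathbf{x}^{\top}$ then yields $\rho(\Phi)\, \mathbf{x}^{\top}|\mathbf{y}| \leq \rho(G)\, \mathbf{x}^{\top}|\mathbf{y}|$, whence $\rho(\Phi) \leq \rho(G)$.

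For the equality case, assume $\rho(\Phi) = \rho(G)$ and that $\rho(G)$ itself is an eigenvalue of $A(\Phi)$ (the other case $-\rho(G) \in \spec(A(\Phi))$ is handled by replacing $\Phi$ with $-\Phi$, noting that $A(-\Phi) = -A(\Phi)$ and that $-\Phi$ is also a $\mathbb{T}$-gain graph on $G$). Chasing the inequalities backwards, equality forces $A(G)|\mathbf{y}| = \rho(G)|\mathbf{y}|$, so $|\mathbf{y}|$ is a positive scalar multiple of the Perron vector and in particular $|\mathbf{y}| > 0$. Writing $y_p = |y_p|\,e^{i\theta_p}$ and setting $U = \diag(e^{i\theta_1}, \ldots, e^{i\theta_n})$, the switched matrix $A(\Phi') := U^{*}A(\Phi)U$ satisfies $A(\Phi')\,|\mathbf{y}| = \rho(G)\,|\mathbf{y}|$. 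Now the key step: the equality case of the triangle inequality forces, for each $p$, all nonzero summands $a'_{p,q}|y_q|$ (with $|y_q|>0$) to share a common argument, which must equal the argument of $\rho(G)|y_p|$, namely $0$. Since $|a'_{p,q}|\in\{0,1\}$, this gives $a'_{p,q} \in \{0,1\}$, so $A(\Phi') = A(G)$. Hence $\Phi \sim (G,1)$, i.e., $\Phi$ is balanced.

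The converse is straightforward: if $\Phi$ is balanced, then $A(\Phi)$ is unitarily similar (via a diagonal switching) to $A(G)$, and so $\spec(\Phi) = \spec(G)$, giving $\rho(\Phi) = \rho(G)$; if $-\Phi$ is balanced, the same argument applied to $-\Phi$ together with $A(-\Phi) = -A(\Phi)$ gives $\rho(\Phi) = \rho(-\Phi) = \rho(G)$. The main obstacle I anticipate is the argument-matching step that extracts $A(\Phi') = A(G)$ from the equality in $|A(\Phi)\mathbf{y}| = |A(\Phi)||\mathbf{y}|$; it must be justified carefully using both the strict positivity of $|\mathbf{y}|$ (which itself comes from irreducibility of $A(G)$ via Perron--Frobenius) and the fact that every gain lies on the unit circle.
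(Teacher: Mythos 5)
This statement is quoted in the paper from \cite{Our-gain1} (its Lemma 4.1 and Theorem 4.4) and is not proved here, so there is no in-paper proof to compare against. Your argument is correct and is the standard one for results of this type: entrywise domination $|A(\Phi)|=A(G)$ plus the positive left Perron vector of the irreducible matrix $A(G)$ gives the bound, and in the equality case Perron--Frobenius forces $|\mathbf{y}|>0$, after which the equality case of the triangle inequality (together with $|a'_{p,q}|=1$ and $\rho(G)>0$) pins every switched gain to $1$, yielding $\Phi\sim(G,1)$; the reduction of the $-\rho(G)$ case to $-\Phi$ and the converse via unitary diagonal similarity are also handled correctly. No gaps beyond the trivial one-vertex case, which is vacuous.
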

	
 \begin{theorem}[{\cite[Theorem 4.5]{Our-gain1} }]\label{th-2.2}
 Let $G$ be a connected graph. Then we have the following:
 \begin{enumerate}
  \item If $ G $ is bipartite, then whenever $ \Phi $ is balanced implies $ -\Phi $ is balanced.
  \item If $ \Phi $ is balanced implies $ -\Phi $ is balanced for some gain, then $ G $ is bipartite.
 \end{enumerate}
 \end{theorem}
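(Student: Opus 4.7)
The plan is to isolate a single identity describing how cycle gains transform under $\Phi \mapsto -\Phi$, and then deduce both parts from it together with the parity of cycle lengths. Since $-\Phi$ is the gain graph whose adjacency matrix is $-A(\Phi)$, each of its edge gains is the negative of the corresponding gain in $\Phi$. Consequently, for any cycle $C$ of length $k$ in $G$,
\[
\varphi_{-\Phi}(\overrightarrow{C}) \;=\; \prod_{j=1}^{k}\bigl(-\varphi(\overrightarrow{e_{j,j+1}})\bigr) \;=\; (-1)^{k}\,\varphi_{\Phi}(\overrightarrow{C}).
\]

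For part (1), I would assume $G$ is bipartite and $\Phi$ is balanced. Every cycle $C$ of $G$ then has even length, so $(-1)^{|C|}=1$, and the identity above forces $\varphi_{-\Phi}(\overrightarrow{C}) = \varphi_{\Phi}(\overrightarrow{C}) = 1$ for every cycle $C$. Hence $-\Phi$ is balanced.

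For part (2), I would argue by contrapositive: assuming $G$ is not bipartite, exhibit a single gain $\varphi$ for which $\Phi$ is balanced but $-\Phi$ is not. The natural witness is the trivial gain $\varphi \equiv 1$. With this choice every cycle gain of $\Phi$ equals $1$, so $\Phi$ is balanced. On the other hand, $G$ contains an odd cycle $C_{o}$ of some odd length $k$ by hypothesis, and the identity yields $\varphi_{-\Phi}(\overrightarrow{C_{o}}) = (-1)^{k} = -1 \neq 1$, so $-\Phi$ is not balanced. This contradicts the assumption that the implication ``$\Phi$ balanced $\Rightarrow -\Phi$ balanced'' holds, so $G$ must in fact be bipartite.

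There is no real obstacle here; the only point that requires care is the cycle-gain identity itself, specifically confirming from the definitions that negating the adjacency matrix corresponds to negating each edge gain and hence to multiplying each length-$k$ cycle gain by $(-1)^{k}$. Once that identity is in hand, both directions collapse to a parity check, and the bipartite/odd-cycle dichotomy immediately closes the argument.
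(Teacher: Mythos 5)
The paper does not prove this statement at all: it is imported verbatim as \cite[Theorem 4.5]{Our-gain1}, so there is no in-paper proof to compare against. Judged on its own merits, your argument is essentially correct, and the identity $\varphi_{-\Phi}(\overrightarrow{C})=(-1)^{k}\varphi_{\Phi}(\overrightarrow{C})$ for a $k$-cycle is indeed the whole content: part (1) is an immediate parity check, and the odd-cycle computation settles part (2).

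One point of logical hygiene in part (2). As you have phrased the hypothesis (``the implication `$\Phi$ balanced $\Rightarrow-\Phi$ balanced' holds for some gain''), the statement would be vacuously satisfiable by any unbalanced gain on a non-bipartite graph, so your contrapositive --- which exhibits only the single witness $\varphi\equiv 1$ where the implication fails --- would not refute an existential hypothesis. The intended reading, consistent with how the theorem is invoked later in the paper, is that there exists a gain for which $\Phi$ \emph{and} $-\Phi$ are both balanced. Under that reading your computation still works, but it should be applied to the given balanced $\Phi$ rather than to a freshly constructed one: for any odd cycle $C$ of length $k$, balancedness of $\Phi$ gives $\varphi_{-\Phi}(\overrightarrow{C})=(-1)^{k}\cdot 1=-1\neq 1$, contradicting balancedness of $-\Phi$; hence $G$ has no odd cycles and is bipartite. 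This is a one-line repair, not a missing idea --- the cycle-gain identity you isolate is exactly the right tool.
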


 \begin{lemma}[{\cite[Corollary 3.2]{Our-paper-2}}]\label{lm-2.1}
 Let $ \Phi_1=(G, \varphi_1) $ and $ \Phi_2=(G, \varphi_2) $ be two $\mathbb{T}$-gain graphs on a connected graph $ G $ with $ n $ vertices and $ m $ edges. Let $ \{ C_1, C_2, \dots, C_{m-n+1}\} $ be the fundamental cycles of $ G $ with respect to a normal spanning tree of $ G $. Then $ \Phi_1 \sim \Phi_2 $ if and only if $ \varphi_1(\overrightarrow{C_j}) = \varphi_2(\overrightarrow{C_j})$, for all $ j=1, 2, \dots, (m-n+1)$.
 \end{lemma}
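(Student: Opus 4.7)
The plan is to prove the two directions separately, with the easy direction coming from a telescoping product on cycles and the harder direction requiring an explicit construction of the switching matrix along a spanning tree.

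For the forward direction, suppose $\Phi_1 \sim \Phi_2$, so there is a unitary diagonal matrix $U = \diag(u_1,\dots,u_n)$ with $A(\Phi_2) = U A(\Phi_1) U^{*}$. Reading off entries gives $\varphi_2(\overrightarrow{e_{p,q}}) = u_p\, \varphi_1(\overrightarrow{e_{p,q}})\, \overline{u_q}$ whenever $v_p \sim v_q$. For any cycle $C \equiv v_{i_1}\text{--}v_{i_2}\text{--}\cdots\text{--}v_{i_k}\text{--}v_{i_1}$, multiplying these identities around the cycle causes the factors $u_{i_j}$ and $\overline{u_{i_j}}$ to pair off and cancel (using $|u_{i_j}|=1$), so $\varphi_2(\overrightarrow{C}) = \varphi_1(\overrightarrow{C})$. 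This holds for every cycle, in particular for the fundamental cycles $C_1,\dots,C_{m-n+1}$.

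For the backward direction, fix a normal (rooted) spanning tree $T$ of $G$ with root $r$. I would build the switching matrix $U = \diag(u_1,\dots,u_n)$ by traversing $T$ from the root. Set $u_r = 1$, and for any vertex $v$ with parent $p$ in $T$, set
\[
u_v \;=\; u_p \cdot \varphi_1(\overrightarrow{e_{p,v}})\,\overline{\varphi_2(\overrightarrow{e_{p,v}})}.
\]
This choice makes $u_p\, \varphi_1(\overrightarrow{e_{p,v}})\,\overline{u_v} = \varphi_2(\overrightarrow{e_{p,v}})$ on every tree edge, i.e.\ the switched gain graph $\Phi_1'$ defined by $U A(\Phi_1) U^{*}$ agrees with $\Phi_2$ on all of $T$. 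Since each $u_v$ is a product of unit complex numbers, $U$ is unitary.

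It remains to show $\Phi_1'$ agrees with $\Phi_2$ on the non-tree edges. Let $e \in E(G)\setminus E(T)$ and let $C_e$ be the fundamental cycle containing $e$. By the forward-direction computation applied to $\Phi_1$ and $\Phi_1'$, we have $\varphi_1'(\overrightarrow{C_e}) = \varphi_1(\overrightarrow{C_e})$, and by hypothesis $\varphi_1(\overrightarrow{C_e}) = \varphi_2(\overrightarrow{C_e})$. Now both $\Phi_1'$ and $\Phi_2$ assign the same gain to every tree edge of $C_e$, so the telescoping product of gains around $C_e$ forces the single remaining factor contributed by $e$ to match in $\Phi_1'$ and $\Phi_2$. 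Hence $A(\Phi_1') = A(\Phi_2)$, and $\Phi_1 \sim \Phi_2$. The main delicate point is to arrange the bookkeeping for cycle orientation correctly, which is precisely why one uses a spanning tree: each non-tree edge lies on a unique fundamental cycle, so the cycle condition determines its gain unambiguously once the tree edges are fixed.
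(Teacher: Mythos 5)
Your proof is correct. Note that the paper does not prove this lemma at all---it is imported verbatim as Corollary~3.2 of the cited reference \cite{Our-paper-2}---so there is no in-paper argument to compare against; your two-directional argument (telescoping cancellation of the $u_{i_j}\overline{u_{i_j}}$ factors around any cycle for the forward direction, and propagating the switching function from the root along tree edges and then using the fundamental-cycle gains to pin down each non-tree edge for the converse) is the standard and complete proof of this fact. One small remark: your argument never uses that the spanning tree is \emph{normal}; any spanning tree of the connected graph $G$ works, since all that is needed is that each non-tree edge lies on a unique fundamental cycle whose remaining edges are tree edges.
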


Let $C_n$ denote the cycle on  $n$ vertices.
\begin{theorem}[{\cite[Theorem 6.1]{reff1}}] \label{th-2.3}
 Let $ \Phi=(C_n, \varphi) $ be a $ \mathbb{T} $-gain graph with $ \varphi(\overrightarrow{C_{n}})=e^{i\theta} $. Then
\begin{equation}
\spec(\Phi)=\left\{2\cos \left( \frac{\theta+2\pi j}{n}\right): j=0,1, \dots, (n-1) \right\}.
\end{equation}
\end{theorem}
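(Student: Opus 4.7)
The plan is to reduce to a canonical representative of the switching equivalence class, and then solve a simple recurrence for the eigenvectors. By Lemma \ref{lm-2.1}, two $\mathbb{T}$-gain graphs on the same underlying graph are switching equivalent if and only if they agree on every fundamental cycle. Since $C_{n}$ has exactly one independent cycle, any $\Phi=(C_{n},\varphi)$ with $\varphi(\overrightarrow{C_{n}})=e^{i\theta}$ is switching equivalent to the $\mathbb{T}$-gain graph $\Phi_{0}$ in which every edge on the path $v_{1}-v_{2}-\cdots-v_{n}$ carries gain $1$ and the closing edge $\overrightarrow{e_{n,1}}$ carries gain $e^{i\theta}$. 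As switching equivalence preserves the spectrum, it suffices to compute $\spec(A(\Phi_{0}))$.

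Next I would write $A(\Phi_{0})$ as the Hermitian tridiagonal-plus-corners matrix with $1$'s on the super- and sub-diagonals, $e^{i\theta}$ in the $(n,1)$ entry and $e^{-i\theta}$ in the $(1,n)$ entry, and search for eigenvectors of the form $v=(1,z,z^{2},\dots,z^{n-1})^{T}$ with $z\in\mathbb{T}$. The interior rows $2\le k\le n-1$ of $A(\Phi_{0})v$ give $z^{k-2}+z^{k}=z^{k-1}(z+z^{-1})$, which forces $\lambda=z+z^{-1}$. The first and last rows produce the same single compatibility condition
\[
z^{n}=e^{i\theta}.
\]

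This equation has exactly $n$ distinct solutions $z_{j}=\exp\!\bigl(i(\theta+2\pi j)/n\bigr)$, $j=0,1,\dots,n-1$, and for each one we obtain the eigenvalue
\[
\lambda_{j}=z_{j}+z_{j}^{-1}=2\cos\!\left(\frac{\theta+2\pi j}{n}\right).
\]
Because the $z_{j}$ are pairwise distinct, the corresponding eigenvectors form a nonsingular Vandermonde matrix, hence they are linearly independent. This accounts for all $n$ eigenvalues of the Hermitian matrix $A(\Phi_{0})$, yielding the claimed multiset.

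The only potentially delicate step is the bookkeeping in the boundary rows: one must verify that the conjugate pair $e^{\pm i\theta}$ placed at the corners is consistent with the orientation convention used to define $\varphi(\overrightarrow{C_{n}})$, so that the compatibility condition comes out as $z^{n}=e^{i\theta}$ rather than $z^{n}=e^{-i\theta}$. Since cosine is even, either convention ultimately yields the same multiset of eigenvalues, so even this subtlety is benign. Everything else is a routine Vandermonde counting argument.
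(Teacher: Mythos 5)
Your proof is correct. Note that the paper itself gives no argument for this statement: it is imported verbatim as \cite[Theorem~6.1]{reff1}, so there is no internal proof to compare against. Your argument is the standard one (and essentially the one in the cited source): reduce by switching to the representative with all gains $1$ except $e^{i\theta}$ on the closing edge, then test vectors $(1,z,\dots,z^{n-1})^{T}$, which force $\lambda=z+z^{-1}$ from the interior rows and $z^{n}=e^{i\theta}$ from the two boundary rows. The Vandermonde independence of the $n$ resulting eigenvectors correctly pins down the full multiset of eigenvalues even when some $\lambda_{j}$ coincide, and your closing remark about the orientation convention is right: replacing $e^{i\theta}$ by $e^{-i\theta}$ permutes the roots by $j\mapsto n-j$ and leaves the multiset $\left\{2\cos\left(\frac{\theta+2\pi j}{n}\right)\right\}$ unchanged. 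One small bookkeeping point worth making explicit is that the path $v_{1}-\cdots-v_{n}$ is a spanning tree of $C_{n}$ whose unique fundamental cycle is $C_{n}$ itself, so Lemma~\ref{lm-2.1} does apply and $\Phi\sim\Phi_{0}$ as claimed.
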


\begin{lemma}[{ \cite[Theorem 1.13]{He-Xia-Dong}}]\label{2-lm5}
Let $ \Phi=(G, \varphi) $ be any connected $ \mathbb{T} $-gain graph. Then $$2\max\limits_{V_0}\mu(G-V_0)\leq r(G, \varphi)\leq 2\mu(G)+b(G),$$
where $ V_0 $ is any proper subset of $ V(G) $ such that $ G-V_0 $ is acyclic and $ b(G) $ is the minimum integer $ |U| $ such that $ G-U $ is bipartite, $ U \subset V(G) $.
\end{lemma}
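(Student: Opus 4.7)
The plan is to establish the two inequalities separately, both relying on Cauchy interlacing combined with the structure of $\mathbb{T}$-gain graphs on two restricted graph classes: forests for the lower bound and bipartite graphs for the upper bound. Throughout, I use the standard fact that $r(\Phi)=n_+(A(\Phi))+n_-(A(\Phi))$, where the inertia of any principal submatrix of a Hermitian matrix interlaces with that of the ambient matrix.

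For the lower bound, fix $V_0 \subset V(G)$ achieving the maximum on the left-hand side, so that $F := G - V_0$ is acyclic. Because $F$ has no cycles, every $\mathbb{T}$-gain graph on $F$ is trivially balanced in the sense of Section~\ref{prelim}, so by Lemma~\ref{lm-2.1} the restricted gain graph $(\Phi - V_0)$ is switching equivalent to $(F, 1)$, and hence $A(\Phi - V_0)$ is unitarily similar via a diagonal unitary to $A(F)$. Invoking the classical formula $\rank(A(F)) = 2\mu(F)$ for any forest, I conclude $r(\Phi - V_0) = 2\mu(F)$. Since $A(\Phi - V_0)$ is a principal submatrix of the Hermitian $A(\Phi)$, the rank of the submatrix is bounded by the rank of the full matrix, giving $r(\Phi) \geq 2\mu(G - V_0)$. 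Taking the maximum over all admissible $V_0$ completes this direction.

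For the upper bound, choose $U \subset V(G)$ with $|U| = b(G)$ and $G - U$ bipartite. On the bipartite graph, writing $A(\Phi - U)$ in block-antidiagonal form $\left(\begin{smallmatrix} 0 & C \\ C^{*} & 0 \end{smallmatrix}\right)$ relative to the bipartition yields $r(\Phi - U) = 2\rank(C) \leq 2\mu(G - U) \leq 2\mu(G)$, where the inequality $\rank(C)\le \mu(G-U)$ is a standard combinatorial rank bound on bipartite biadjacency matrices (embed any linearly independent set of rows/columns into a system of distinct representatives). The remaining task is to reinsert the $b(G)$ vertices of $U$ and control the total rank increase.

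The main obstacle lies exactly here: a naive application of Cauchy interlacing only yields a rank jump of at most $2$ per added vertex, which would give the weaker bound $2\mu(G)+2b(G)$. To sharpen this by a factor of two, I would proceed by induction on $b(G)$. In the inductive step, pick $u \in U$ and consider $\Phi - u$, where $b(G-u) \leq b(G)-1$. The key structural input is that, after switching, the column of $A(\Phi)$ indexed by $u$ decomposes into a part supported on one side of the bipartition of $G-U$ and a part supported on $U\setminus\{u\}$; projecting onto the null space of $A(\Phi - u)$ and exploiting the block-antidiagonal form of $A(\Phi-U)$, one shows that the added row/column cannot simultaneously produce a new positive and a new negative eigenvalue. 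This forces the rank to increase by at most $1$ at each step, closing the induction. Carrying this careful inertia analysis on the bipartite backbone is the delicate technical heart of the proof.
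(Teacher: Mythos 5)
This lemma is not proved in the paper at all: it is quoted verbatim from the cited reference (He--Xia--Dong, Theorem~1.13), so there is no in-paper argument to measure your attempt against; it has to stand on its own. Your proof of the lower bound does stand: a gain graph on an acyclic graph is vacuously balanced, hence switching equivalent to the forest with all gains $1$ (apply the switching argument componentwise, since Lemma~\ref{lm-2.1} is stated for connected graphs), the rank of a forest equals twice its matching number, and the rank of a principal submatrix never exceeds the rank of the full matrix. That direction is complete.

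The upper bound, however, contains a genuine gap, and the repair you sketch is provably false. Take $G$ to be $C_4=v_1v_2v_3v_4v_1$ with an extra vertex $u$ joined to the adjacent pair $v_1,v_2$, all gains equal to $1$. Then $U=\{u\}$ is a minimum odd transversal ($b(G)=1$), $\mu(G)=\mu(G-u)=2$, $A(G-u)=A(C_4)$ has rank $2$ with one positive and one negative eigenvalue, while $A(G)$ has rank $4$; by interlacing its inertia must be $(2,2,1)$, so reinserting $u$ creates one new positive \emph{and} one new negative eigenvalue simultaneously, and the rank jumps by $2$, not $1$. This refutes both your key claim and your structural premise: for any $u$ in a \emph{minimum} odd transversal $U$, the neighbours of $u$ in $G-U$ must meet both sides of the bipartition (otherwise $U\setminus\{u\}$ would already make $G$ bipartite), so the column of $A(\Phi)$ indexed by $u$ can never be ``supported on one side of the bipartition.'' Note that the inequality still holds in this example ($4\le 2\cdot 2+1$) only because there is slack in the bipartite base case ($r(C_4)=2<2\mu(C_4)=4$); any correct proof must exploit this kind of global slack — the published arguments run a more delicate induction on vertices or edges, with case analysis on pendant vertices and vertices lying on cycles, tracking how $r$, $\mu$ and $b$ move together — rather than a per-vertex rank-increment count, which cannot work.
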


 In \cite{Vertexenergy1}, the authors studied the notion of vertex energy of a graph.
\begin{definition} [{\cite[Definition 2.1]{Vertexenergy1}}]
Let $ G $ be a graph with vertex set $ V(G)=\{ v_1, v_2, \dots, v_n\} $. Then the \emph{energy of a vertex} $ v_j $, denoted by $ \mathcal{E}_{G}(v_j) $, is defined as $\mathcal{E}_{G}(v_j)=|A(G)|_{jj} $, where $ |A(G)|=(A(G)A(G)^{*})^{\frac{1}{2}} $.
\end{definition}

Next,  we recall a few results related to the vertex energy.
\begin{lemma}[{\cite[Lemma 2.2]{Vertexenergy1}}]\label{lm-2.2}
 Let $ G $ be an undirected graph with vertex set $ V(G)=\{ v_1, v_2, \dots, v_n\} $. Then
\begin{equation}
\mathcal{E}_{G}(v_i)=\sum\limits_{j=1}^{n}Q_{ij}|\lambda_j|, \text{  for } i=1,2, \dots n.
\end{equation}
where $ Q_{ij}=q_{ij}^{2} $ and $ Q=(q_{ij}) $ is the orthogonal matrix whose columns are the eigenvectors of $ G $ and $ \lambda_j $ is the $ j$-th  eigenvalue of $ G $.	  	
\end{lemma}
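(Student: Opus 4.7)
The plan is to deduce the formula directly from the spectral decomposition of the adjacency matrix. Since $G$ is undirected, $A(G)$ is a real symmetric matrix, so by the spectral theorem there exists a real orthogonal matrix $Q=(q_{ij})$ whose columns are eigenvectors of $A(G)$ such that
\begin{equation*}
A(G)=Q\Lambda Q^{T}, \qquad \Lambda=\diag(\lambda_{1},\lambda_{2},\dots,\lambda_{n}),
\end{equation*}
where the $\lambda_{j}$'s are the eigenvalues of $A(G)$. This diagonalization is the only structural input I will need.

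Next, I would unwind the definition $|A(G)|=(A(G)A(G)^{*})^{1/2}$. Because $A(G)$ is real symmetric, $A(G)^{*}=A(G)$, so $A(G)A(G)^{*}=A(G)^{2}=Q\Lambda^{2}Q^{T}$. Applying the positive square root via functional calculus on the diagonal yields
\begin{equation*}
|A(G)|=Q\,|\Lambda|\,Q^{T},\qquad |\Lambda|=\diag(|\lambda_{1}|,|\lambda_{2}|,\dots,|\lambda_{n}|).
\end{equation*}

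Finally, I would simply read off the $(i,i)$-entry of this product. Writing out the matrix multiplication gives
\begin{equation*}
|A(G)|_{ii}=\sum_{j=1}^{n}q_{ij}\,|\lambda_{j}|\,q_{ij}=\sum_{j=1}^{n}q_{ij}^{2}\,|\lambda_{j}|=\sum_{j=1}^{n}Q_{ij}|\lambda_{j}|,
\end{equation*}
which is precisely $\mathcal{E}_{G}(v_{i})$ by the definition of vertex energy. There is essentially no obstacle here: the argument is a one-line consequence of the spectral theorem combined with the functional-calculus interpretation of the matrix absolute value. The only small point worth flagging, should an issue arise, is that $|A(G)|=Q|\Lambda|Q^{T}$ is well-defined even when some $\lambda_{j}=0$, since the square root of the nonnegative diagonal matrix $\Lambda^{2}$ is unique.
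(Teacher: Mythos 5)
Your argument is correct and is exactly the standard spectral-decomposition computation; the paper itself only cites this lemma from the reference, but its proof of the $\mathbb{T}$-gain analogue (Lemma 3.2) proceeds by the identical route of diagonalizing the adjacency matrix and reading off the $(i,i)$-entry of $Q|\Lambda|Q^{*}$. Nothing further is needed.
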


\begin{lemma}[{\cite[Theorem 3.3]{Vertexenergy1}}] \label{th3}
If $ G $ is a connected graph on $n$ vertices with at least one edge, then
\begin{equation}
\mathcal{E}_{G}(v_j) \geq \frac{d_j}{\Delta(G)},\qquad \text{ for all $ v_j \in V(G) $}.
\end{equation}
Equality occurs if and only if $ G $ is a complete bipartite graph with equal partition size.
\end{lemma}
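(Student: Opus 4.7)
The plan is to combine Lemma \ref{lm-2.2} with the elementary identity $(A(G)^2)_{jj} = d_j$. Writing the spectral decomposition $A(G) = Q\,\diag(\lambda_1,\ldots,\lambda_n)\,Q^T$ with $Q = (q_{ij})$ orthogonal and $Q_{jk} = q_{jk}^2$, Lemma \ref{lm-2.2} gives $\mathcal{E}_G(v_j) = \sum_k Q_{jk}|\lambda_k|$, while the same decomposition yields
\[
d_j \;=\; (A(G)^2)_{jj} \;=\; \sum_{k=1}^n Q_{jk}\,\lambda_k^2.
\]
Since $G$ is a simple graph, $|\lambda_k| \leq \rho(G) \leq \Delta(G)$ for every $k$, so $\lambda_k^2 \leq \Delta(G)\,|\lambda_k|$ termwise. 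As the weights $Q_{jk}$ are nonnegative, we conclude
\[
d_j \;\leq\; \Delta(G)\sum_k Q_{jk}|\lambda_k| \;=\; \Delta(G)\,\mathcal{E}_G(v_j),
\]
which is the claimed inequality.

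For the equality characterization, the ``if'' direction is a direct computation on $K_{m,m}$: the nonzero eigenvalues are $\pm m$, each carrying a single normalized eigenvector whose squared $j$th coordinate equals $1/(2m)$, so Lemma \ref{lm-2.2} gives $\mathcal{E}_G(v_j) = 1 = d_j/\Delta(G)$. For the ``only if'' direction, equality for every $v_j$ forces $\lambda_k^2 = \Delta(G)|\lambda_k|$ whenever some $Q_{jk} > 0$; since each eigenvector has a nonzero coordinate, this forces every eigenvalue of $G$ to lie in $\{-\Delta(G),0,\Delta(G)\}$. In particular $\rho(G) = \Delta(G)$, and since $G$ is connected this forces $G$ to be $\Delta(G)$-regular. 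The trace condition $\tr A(G) = 0$ then forces $-\Delta(G)$ to be an eigenvalue, so $G$ is bipartite and, by connectedness, both $\pm\Delta(G)$ are simple; the spectrum is $\{\Delta(G),-\Delta(G),0,\ldots,0\}$. Counting edges via $\sum_k \lambda_k^2 = 2|E(G)| = n\,\Delta(G)$ gives $n = 2\Delta(G)$. Consequently $G$ is a $\Delta(G)$-regular bipartite graph on equal parts of size $\Delta(G)$, whose biadjacency matrix is a $\Delta(G)\times\Delta(G)$ 0-1 matrix with all row and column sums equal to $\Delta(G)$; the unique such matrix is the all-ones matrix, so $G = K_{\Delta(G),\Delta(G)}$.

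The main obstacle is the forward direction of the equality case: converting the purely spectral statement ``every eigenvalue lies in $\{-\Delta(G),0,\Delta(G)\}$'' into the combinatorial identification $G = K_{m,m}$. This is handled by peeling off one structural property at a time (regularity from $\rho(G)=\Delta(G)$ on a connected graph, bipartiteness from $\tr A(G)=0$, the vertex count from $\sum_k \lambda_k^2 = n\Delta(G)$), after which the biadjacency matrix is pinned down uniquely by its line sums.
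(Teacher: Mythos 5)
Your proof is correct. Note that the paper does not actually prove this lemma --- it is quoted verbatim from \cite{Vertexenergy1} as a preliminary --- so there is no internal proof to compare against; the closest analogue is the paper's proof of Theorem \ref{Th.5}, the $\mathbb{T}$-gain version. Your inequality argument is exactly the one used there: write $\mathcal{E}_G(v_j)=\sum_k Q_{jk}|\lambda_k|$ and $d_j=\sum_k Q_{jk}\lambda_k^2$ via the spectral decomposition and compare termwise using $|\lambda_k|\le\Delta(G)$. Where you genuinely add something is the equality case: the paper's Theorem \ref{Th.5} reduces the gain setting to the underlying graph and then invokes Lemma \ref{th3} as a black box, whereas you carry the spectral information all the way to the combinatorial conclusion (spectrum contained in $\{-\Delta,0,\Delta\}$, hence $\rho(G)=\Delta(G)$ and regularity by connectedness, bipartiteness from the trace, $n=2\Delta(G)$ from $\sum_k\lambda_k^2=2|E(G)|$, and finally the all-ones biadjacency matrix). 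Each of these steps is sound, including the observation that every eigenvector has a nonzero coordinate so that equality at all vertices pins down every eigenvalue, and the use of Perron--Frobenius to make $\pm\Delta(G)$ simple. The result is a self-contained proof of a statement the paper only cites.
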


Let $ G $ and $ G_1 $ be two simple graphs. Let  $ D_G $ be a mixed graph on $ G $. The mixed Kronecker product, denoted by  $ D_{G}\otimes G_1 $,  is the Kronecker product of the Hermitian adjacency matrix of $G$ and the adjacency matrix of the simple graph $G_1$ \cite{Wei-Li}.

\begin{lemma}[{\cite[ Lemma 2.7]{Wei-Li}}]\label{lm.12} Let $ \{ \lambda_1, \lambda_2, \dots, \lambda_s \} $ be the spectrum of $ G_1 $, and $ \{\gamma_1, \gamma_2, \dots, \gamma_t \} $ be the spectrum of $ D_G $(with respect to the Hermitian adjacency matrix), then the spectrum of a mixed Kronecker product $ D_G \otimes G_1 $ is $ \{\lambda_i\gamma_j: 1 \leq i \leq s, 1 \leq j\leq t \} $
\end{lemma}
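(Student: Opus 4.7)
The plan is to invoke the standard spectral behavior of the Kronecker product, adapted to the fact that one of the two factors (namely $H(D_G)$) is Hermitian rather than real symmetric. Since the Kronecker product of a Hermitian matrix with a real symmetric matrix is Hermitian, the resulting matrix $H(D_G)\otimes A(G_1)$ admits a full orthonormal eigenbasis, and we only need to exhibit the eigenvectors explicitly.

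First, I would use the Hermitian spectral theorem to write $H(D_G)x_j = \gamma_j x_j$ for an orthonormal basis $\{x_1,\dots,x_t\}$ of $\mathbb{C}^t$, and $A(G_1)y_i = \lambda_i y_i$ for an orthonormal basis $\{y_1,\dots,y_s\}$ of $\mathbb{R}^s \subset \mathbb{C}^s$. Then I would compute
\[
\bigl(H(D_G)\otimes A(G_1)\bigr)\bigl(x_j\otimes y_i\bigr)
= \bigl(H(D_G)x_j\bigr)\otimes \bigl(A(G_1)y_i\bigr)
= \gamma_j\lambda_i \,(x_j\otimes y_i),
\]
using the standard identity $(A\otimes B)(u\otimes v) = Au\otimes Bv$, which is purely formal and does not use Hermiticity. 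This immediately shows that every product $\gamma_j\lambda_i$ is an eigenvalue of $H(D_G)\otimes A(G_1)$, with eigenvector $x_j\otimes y_i$.

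Next, I would verify that the family $\{x_j\otimes y_i\}_{1\le j\le t,\;1\le i\le s}$ consists of $st$ mutually orthogonal nonzero vectors, via
\[
\langle x_j\otimes y_i,\; x_{j'}\otimes y_{i'}\rangle
= \langle x_j,x_{j'}\rangle\,\langle y_i,y_{i'}\rangle
= \delta_{jj'}\delta_{ii'}.
\]
Since $H(D_G)\otimes A(G_1)$ is a Hermitian $st\times st$ matrix and we have produced $st$ orthonormal eigenvectors, these exhaust the spectrum. Hence the multiset of eigenvalues is exactly $\{\lambda_i\gamma_j : 1\le i\le s,\;1\le j\le t\}$, counted with multiplicity.

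There is essentially no substantive obstacle: the proof is a direct transcription of the classical Kronecker-product spectral theorem, and the only minor point worth stating carefully is that Hermiticity (rather than symmetry) of $H(D_G)$ is enough, because orthonormality of the combined basis follows from the inner-product factorization above, and Hermiticity guarantees that the $st$ eigenvectors we construct form a complete basis of $\mathbb{C}^{st}$.
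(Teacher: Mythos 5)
The paper does not prove this lemma; it is quoted verbatim from [Wei-Li, Lemma 2.7] and used as a black box. Your argument is the standard and correct proof: the identity $(A\otimes B)(u\otimes v)=Au\otimes Bv$ produces $st$ eigenvectors, and their orthonormality (which already gives linear independence, so even the appeal to Hermiticity is more than is strictly needed) shows they diagonalize the $st\times st$ matrix, so the products $\lambda_i\gamma_j$ exhaust the spectrum with multiplicity.
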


 The \emph{Hermitian energy} of a mixed graph $ D_G $ is the sum of the absolute values of the eigenvalues of $ H(D_G) $, and is denoted by $ \mathcal{E}_{H}(D_G) $.

 Let us collect a few results  on energy in terms of matching number.

\begin{lemma}[{\cite[Lemma 4.1]{Wang-Ma}}] \label{lm.14}
 For any bipartite graph $ G $, $ \mathcal{E}(G)\geq 2\mu(G) $. Equality occur if and only if each component of $ G $ is complete bipartite graph with perfect matching together with some isolated vertices.
\end{lemma}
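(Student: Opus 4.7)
The plan is to exploit the bipartite block structure of $A(G)$ together with Ky Fan's variational characterisation of partial sums of singular values: the inequality will drop out once a maximum matching is fed into Ky Fan's identity, and equality will then be read off from a rigidity argument combined with additivity over connected components.

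First I would fix a bipartition $V(G) = X \sqcup Y$ and write
\[
A(G) = \begin{pmatrix} 0 & B \\ B^{T} & 0 \end{pmatrix},
\]
where $B$ is the biadjacency matrix. The nonzero eigenvalues of $A(G)$ are exactly $\pm \sigma_1(B), \dots, \pm \sigma_r(B)$, with $\sigma_1(B) \geq \cdots \geq \sigma_r(B) > 0$ the nonzero singular values of $B$ and $r = \rank(B)$. Hence $\mathcal{E}(G) = 2\sum_{i=1}^{r} \sigma_i(B)$, and the lemma reduces to proving $\sum_{i=1}^{r} \sigma_i(B) \geq \mu(G)$. Pick a maximum matching $M = \{e_{p_j, q_j}\}_{j=1}^{\mu(G)}$ with $v_{p_j} \in X$ and $v_{q_j} \in Y$, and let $U, V$ be the matrices whose columns are the standard basis vectors at $\{v_{p_j}\}$ and $\{v_{q_j}\}$ respectively; these have orthonormal columns and $U^{T} B V = I_{\mu(G)}$. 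Ky Fan's identity
\[
\sum_{i=1}^{\mu(G)} \sigma_i(B) = \max_{U^{T}U = V^{T}V = I_{\mu(G)}} \tr(U^{T} B V)
\]
then gives $\sum_{i=1}^{r} \sigma_i(B) \geq \mu(G)$, and hence $\mathcal{E}(G) \geq 2\mu(G)$.

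For the equality characterisation, additivity of both $\mathcal{E}$ and $\mu$ over components reduces matters to the connected case. In a connected bipartite $G$ with $\mathcal{E}(G) = 2\mu(G)$, one gets $\rank(B) \leq \mu(G)$ and $\sigma_1(B) + \cdots + \sigma_{\mu}(B) = \mu(G)$. I would then combine AM--GM on these singular values with the integrality of determinants of $\mu \times \mu$ $0/1$-submatrices of $B$, together with Perron positivity of the top singular vector on a connected graph, to force $\rank(B) = 1$; the $0/1$-structure of $B$ and the connectedness of $G$ then give $B = \mathbf{1}_{|X|} \mathbf{1}_{|Y|}^{T}$, hence $G = K_{|X|, |Y|}$, and the identity $\sigma_1(B) = \sqrt{|X||Y|} = \min(|X|, |Y|)$ forces $|X| = |Y|$, so $G = K_{r,r}$ with $r = \mu(G)$. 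The main obstacle is precisely this rank-$1$ rigidity in the connected case: the inequality is a one-line consequence of Ky Fan, but equality demands a delicate interplay between the matching structure, Perron--Frobenius positivity, and the integrality of the entries of $B$. An alternative route, perhaps cleaner, is induction on the number of edges using the Day--So cut-set lemma recalled earlier, which yields a strict decrease in $\mathcal{E}$ upon deleting an appropriate cut set and thereby rules out every connected bipartite graph not equal to $K_{r,r}$.
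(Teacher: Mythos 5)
Your inequality argument is sound, and it is a genuinely different route from anything in the paper: the paper only cites this lemma from Wang--Ma and instead proves the gain-graph analogue (Theorem \ref{th4}) by induction on $\mu(G)$ via the cut-set Lemma \ref{lm6}. Writing $A(G)$ in bipartite block form and applying Ky Fan's maximum principle to the isometries supported on a maximum matching gives $\sum_{i=1}^{\mu}\sigma_i(B)\geq\tr(U^{T}BV)=\mu(G)$ directly. The only slip is the assertion $U^{T}BV=I_{\mu(G)}$: the off-diagonal entries of $U^{T}BV$ record adjacencies between endpoints of distinct matching edges and need not vanish; but since only the trace enters, this is harmless.

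The equality characterisation, however, has a genuine gap. Everything hinges on the claim that a connected bipartite $G$ with $\mathcal{E}(G)=2\mu(G)$ must have $\rank(B)=1$, and the ingredients you list do not visibly deliver it. What they do give is this: any nonzero $r\times r$ minor of $B$ (with $r=\rank(B)$) has absolute value at least $1$, and since singular values of submatrices are dominated by those of $B$, one gets $\prod_{i=1}^{r}\sigma_i(B)\geq 1$; AM--GM then yields $\mu=\sum_{i=1}^{r}\sigma_i\geq r$, which is merely consistent with $r\leq\mu(G)$ and forces nothing. In the boundary case $r=\mu(G)$ the AM--GM equality condition does force all $\sigma_i=1$, hence $BB^{T}$ idempotent, hence all degrees at most $1$ and $G=K_2$; but for $1<r<\mu(G)$ there is no equality in AM--GM to exploit, and neither integrality of minors nor Perron positivity is brought to bear in any concrete way. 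You would need to exclude, for instance, a connected bipartite graph with $\mu=3$, $\rank(B)=2$ and $\sigma_1+\sigma_2=3$, and nothing in the sketch does so. Your fallback --- induction using the strict energy decrease of Lemma \ref{lm.13} under deletion of a suitable cut set --- is indeed the viable route, and is essentially how the paper handles the gain version in Theorem \ref{th6}; but the substance there is the combinatorial argument that any connected bipartite graph other than $K_{r,r}$ contains an induced subgraph (a $P_4$, or the configuration of Figure \ref{fig2}) whose presence forces $\mathcal{E}(G)>2\mu(G)$. That analysis is the actual content of the equality case and is entirely absent from the proposal.
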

	
\begin{theorem}[{\cite[Theorem 1.1]{Wong-Wang-Chu}}] \label{lm.16}
Let $ G $ be a graph with matching number $ \mu(G) $. Then $ \mathcal{E}(G)\geq 2\mu(G) $. If all cycles (if any) of $ G $ are pairwise vertex disjoint, then equality holds if and only if each component of $ G $ is either an edge or $ 4 $-cycle or an isolated vertices.
\end{theorem}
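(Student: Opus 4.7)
My plan is to prove the inequality $\mathcal{E}(G)\geq 2\mu(G)$ via a matrix trace-duality argument, and then to analyze the equality case componentwise using the same duality. Fix a maximum matching $M=\{u_1v_1,\dots,u_\mu v_\mu\}$ of $G$ and let $B$ be the $n\times n$ symmetric $0$-$1$ matrix with $B_{u_iv_i}=B_{v_iu_i}=1$ for $i=1,\dots,\mu$ and all other entries zero, i.e.\ the adjacency matrix of $M$ viewed as a spanning subgraph. Then $B$ is a block-diagonal sum of $\mu$ copies of $\bigl(\begin{smallmatrix}0&1\\1&0\end{smallmatrix}\bigr)$ and a zero block, so $\|B\|_{\mathrm{op}}=1$, and $\tr(BA(G))=2\mu(G)$ because each edge of $M$ lies in $E(G)$ and contributes $1+1$ to the trace. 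The Schatten trace-norm/operator-norm H\"{o}lder inequality for Hermitian matrices then yields
\[
\mathcal{E}(G)=\|A(G)\|_{1}\geq \tr\!\bigl(BA(G)\bigr)=2\mu(G).
\]

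For the equality characterization under the hypothesis that all cycles of $G$ are pairwise vertex disjoint, each block of $G$ is either an edge or a cycle, so every component is a cactus; since $\mathcal{E}$ and $\mu$ are both additive over components, equality must hold componentwise. If a component $H$ is a tree, then $H$ is bipartite and Lemma \ref{lm.14} forces $H=K_{m,m}$ with a perfect matching, leaving only $H=K_2$ (or the single isolated vertex). If $H$ contains a cycle, I would use the equality case of the H\"{o}lder step: $\tr(BA(H))=\|A(H)\|_{1}$ together with $\|B\|_{\mathrm{op}}=1$ forces $Bv=\operatorname{sgn}(\lambda)\,v$ for every eigenvector $v$ of $A(H)$ with nonzero eigenvalue $\lambda$. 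Comparing the $\pm 1$ spectrum of $B$ (supported on matched pairs) with the cycle spectra supplied by Theorem \ref{th-2.3}, the only cycle consistent with this spectral rigidity is $C_4$; attaching pendant trees or joining further cycles at cut vertices would create eigenvectors of $A(H)$ whose components fail the rigidity, and moreover Lemma \ref{lm.13} can be invoked to show that any bridging edge between a cycle block and any other component of $H$ strictly increases $\mathcal{E}(H)$ without increasing $\mu(H)$ proportionately.

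The main obstacle is precisely this last step: ruling out every nontrivial cactus other than $K_2$, $C_4$, and the isolated vertex. A clean route is via Sachs's formula, which for a cactus expresses the characteristic polynomial as a signed sum over disjoint unions of matchings and vertex-disjoint cycles; this allows $\mathcal{E}(H)-2\mu(H)$ to be bounded strictly from below by a sum of positive ``cycle defects'' whenever $H\notin\{K_1,K_2,C_4\}$. An alternative is a pendant-vertex induction using Cauchy interlacing on $A(H)$ under vertex deletion, with the base cases settled by explicit eigenvalue computations for cycles $C_n$ through Theorem \ref{th-2.3} (which shows $\mathcal{E}(C_n)>2\mu(C_n)$ for all $n\neq 4$).
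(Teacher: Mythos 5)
First, note that the paper does not prove this statement at all: Theorem \ref{lm.16} is quoted verbatim from \cite{Wong-Wang-Chu} as a preliminary, so there is no in-paper proof to compare against. The closest relatives inside the paper are Theorem \ref{th4} and Theorem \ref{th4(ii)}, which are proved by a quite different method (deleting the cut set around a matched edge, Lemma \ref{lm6}/\ref{lm13}, and inducting on $\mu$). Your proof of the inequality $\mathcal{E}(G)\geq 2\mu(G)$ is correct and complete: taking $B$ to be the adjacency matrix of a maximum matching, $\|B\|_{\mathrm{op}}=1$, $\tr(BA(G))=2\mu(G)$, and the trace-norm duality (equivalently, von Neumann's inequality, Theorem \ref{th1}) gives the bound. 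This is a genuinely different and arguably cleaner route than the paper's induction.

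The equality characterization, however, has a real gap, and you name it yourself: you never actually rule out the cacti other than $K_1$, $K_2$ and $C_4$. The rigidity you extract from equality --- every eigenvector $v$ of $A(H)$ with eigenvalue $\lambda\neq 0$ satisfies $Bv=\operatorname{sgn}(\lambda)v$ --- is correct, but the sentence ``attaching pendant trees or joining further cycles at cut vertices would create eigenvectors whose components fail the rigidity'' is an assertion, not an argument, and the two fallback routes (Sachs's formula, pendant-vertex induction) are only announced. As written, the proof establishes the ``if'' direction and the inequality but not the ``only if'' direction. A concrete way to close the gap using only results already quoted in the paper: split the nontrivial components by parity. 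A bipartite component with $\mathcal{E}(H)=2\mu(H)$ is, by Lemma \ref{lm.14}, a complete bipartite graph with a perfect matching, i.e.\ $K_{m,m}$; the hypothesis that all cycles are pairwise vertex disjoint forces $m\leq 2$, leaving exactly $K_2$ and $C_4=K_{2,2}$. A connected non-bipartite component satisfies $\mathcal{E}(H)>2\mu(H)$ by Lemma \ref{lm.11} (or Lemma \ref{lm15} specialized to the all-ones gain), so none occur. Alternatively, your rigidity does yield one useful fact for free --- an unmatched vertex $u$ has $d(u)=(A^2)_{uu}=\sum_i\lambda_i^2|v_i(u)|^2=0$, hence is isolated --- but that alone does not finish the classification.
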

	
\begin{theorem}[{\cite[Theorem1.1, Theorem 1.2]{Wei-Li}}]\label{lm.18}
Let $ D_G $ be a mixed graph with matching number $ \mu(G) $, then $ \mathcal{E}_{H}(D_G) \geq 2\mu(G) $. Equality occur if and only if $ D_G $ is switching equivalent to its underlying graph $ G $, where each component of $ G $ is either a complete bipartite graph with equal partition size or isolated vertices.
\end{theorem}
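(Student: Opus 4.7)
My plan is to model the proof on the classical bipartite result Lemma~\ref{lm.14}, using the matching polynomial of $G$ as a bridge between the Hermitian energy of $D_G$ and the bound $2\mu(G)$. Since both $\mathcal{E}_H$ and $\mu$ are additive over connected components, I would reduce to the case of a connected mixed graph $D_G$ at the outset, and regard $H(D_G)$ as the Hermitian adjacency matrix of a $\mathbb{T}$-gain graph with gains confined to $\{1,\pm i\}$, so that the gain-graph machinery of the paper is fully available.

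For the lower bound I would invoke the Coulson-type integral representation
\begin{equation*}
\mathcal{E}_H(D_G) = \frac{1}{\pi}\int_{-\infty}^{\infty}\log\frac{|\phi(D_G;ix)|}{|x|^n}\,dx,
\end{equation*}
which is valid because $H(D_G)$ is Hermitian (so $\phi(D_G;x)$ has real coefficients) and is monotone in the absolute values of the coefficients of $\phi(D_G;x)=\sum_k(-1)^k c_k(D_G)\,x^{n-k}$. A Sachs-type expansion yields
\begin{equation*}
c_{2k}(D_G) \;=\; m_k(G) \;+\; (\text{contributions from elementary subgraphs containing a cycle}),
\end{equation*}
where each cycle contribution carries a factor $\Re(\varphi(\overrightarrow{C}))$ of the cycle $C$ involved. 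The main analytic step is the coefficient inequality $|c_{2k}(D_G)|\geq m_k(G)$, which I would establish by carefully separating the pure-matching part from the cycle-containing part and applying the triangle inequality to the latter. Feeding this bound into the Coulson integral and comparing with the matching polynomial $M(G;x)=\sum_k(-1)^k m_k(G)\,x^{n-2k}$ then gives
\begin{equation*}
\mathcal{E}_H(D_G)\ \geq\ \mathcal{E}\bigl(M(G;x)\bigr)\ \geq\ 2\mu(G),
\end{equation*}
the last step being an AM--GM argument on the nonvanishing (real, $\pm$-symmetric) roots of $M(G;x)$, using that the bottom nonzero coefficient equals $m_{\mu(G)}(G)\geq 1$.

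For the equality characterization I would trace the chain of inequalities backward. Saturation forces $|c_{2k}(D_G)|=m_k(G)$ for every $k$, which in turn forces every cycle contribution in the Sachs expansion to vanish. By Lemma~\ref{lm-2.1} this is equivalent to $\varphi(\overrightarrow{C})=1$ on every fundamental cycle, so $\Phi$ is balanced and $D_G\sim(G,1)$; hence $\mathcal{E}_H(D_G)=\mathcal{E}(G)=2\mu(G)$, and the equality clause of Lemma~\ref{lm.14} identifies each component of $G$ as either an isolated vertex or a complete bipartite graph with a perfect matching, i.e.\ $K_{a,a}$. The step I expect to be hardest is the coefficient inequality $|c_{2k}(D_G)|\geq m_k(G)$ in the presence of many cycles with non-trivial gains: the cycle contributions are real but fluctuate in sign, and ruling out their destructive interference with the matching count $m_k(G)$ is the main delicate point. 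This same inequality is also what enforces the rigidity required for the equality case, since equality cannot tolerate any non-vanishing cycle term.
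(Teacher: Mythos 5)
First, a framing point: the paper does not prove this statement --- it is quoted verbatim from \cite{Wei-Li} as a known result, and what the paper actually proves is the strictly more general $\mathbb{T}$-gain version (Theorem \ref{th4} together with Theorem \ref{th4(ii)}), by induction on $\mu(G)$ using cut-set deletion and the singular-value inequalities for partitioned Hermitian matrices (Theorems \ref{lm11}, \ref{lm12} and \ref{lm2.15}), with the non-bipartite case eliminated by passing to the bipartite double $\Phi\otimes K_2$ (Lemma \ref{lm15}). Your route is entirely different from both that argument and the one in \cite{Wei-Li}, and unfortunately it founders on what you yourself identify as the main analytic step.

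The coefficient inequality $|c_{2k}(D_G)|\geq m_k(G)$ is false. Take $D_G$ to be the undirected $4$-cycle $C_4=K_{2,2}$ (all edges digons, hence a legitimate mixed graph, and in fact one of the equality cases of the very theorem you are proving). Its characteristic polynomial is $x^4-4x^2$, so the constant coefficient is $0$, whereas $m_2(C_4)=2$. In the Sachs expansion the two perfect matchings contribute $(-1)^2\cdot 2=+2$ while the balanced $4$-cycle contributes $(-1)^{1}\cdot 2\,\Re\big(\varphi(C_4)\big)=-2$: the cancellation is exact. So the destructive interference you flag as ``the main delicate point'' genuinely occurs, and it occurs precisely on the extremal graphs $K_{a,a}$, so no refinement of the triangle-inequality step can rescue the approach: any coefficient-monotonicity argument through the Coulson integral must dominate the matching polynomial coefficientwise, and that domination already fails for $K_{2,2}$. (This is also why the classical Theorem \ref{lm.16} for undirected graphs with cycles is not proved by quasi-order/Coulson methods, which are essentially confined to forests, where $\phi(G;x)=M(G;x)$.) The parts of your plan that do survive are the reduction to connected components, the AM--GM step showing that the matching polynomial has root-sum at least $2\mu(G)$, and the intended use of Lemma \ref{lm-2.1} and Lemma \ref{lm.14} in the equality analysis; but the bridge between $\mathcal{E}_H(D_G)$ and $M(G;x)$ must be replaced wholesale, for instance by the deletion--induction on the matching number via Lemma \ref{lm6} and Lemma \ref{lm13} that the paper uses for Theorem \ref{th4}, followed by the bipartite-double and $P_4$-exclusion arguments of Lemma \ref{lm15} and Theorem \ref{th6} for the equality case.
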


 \begin{lemma}[{\cite[Lemma 3.8]{Wei-Li}}]\label{lm.11}
 Let $ D_G $ be a mixed graph on a connected non bipartite graph $ G $. Then $ \mathcal{E}_{H}(D_G) > 2 \mu(G) $.
 \end{lemma}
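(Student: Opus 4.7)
The plan is to derive the strict inequality directly from the equality characterization supplied by Theorem \ref{lm.18}. That theorem already gives the non-strict bound $\mathcal{E}_H(D_G) \geq 2\mu(G)$ for any mixed graph and specifies precisely when equality holds; the hypothesis that $G$ is connected and non-bipartite will rule out that equality case immediately, upgrading the bound to a strict inequality.

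Concretely, I would first invoke Theorem \ref{lm.18} to obtain $\mathcal{E}_H(D_G) \geq 2\mu(G)$. Next, to promote this to a strict inequality, I would assume for contradiction that $\mathcal{E}_H(D_G) = 2\mu(G)$. The characterization half of Theorem \ref{lm.18} then forces $D_G$ to be switching equivalent to its underlying graph $G$ and forces each connected component of $G$ to be either a complete bipartite graph with equal partition sizes or an isolated vertex. In particular, every connected component of $G$ is bipartite, so $G$ itself is bipartite. But by hypothesis $G$ is connected and non-bipartite, giving the required contradiction. Hence $\mathcal{E}_H(D_G) > 2\mu(G)$.

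The only thing to verify along the way is that the characterization of Theorem \ref{lm.18} genuinely applies under our hypotheses, and it does, since that theorem imposes no bipartiteness restriction on $G$. So there is essentially no obstacle beyond unpacking the equality clause of the preceding theorem, which is why the result can be recorded as a short lemma. If the characterization from Theorem \ref{lm.18} were unavailable and a self-contained argument were required, the main difficulty would shift to quantifying how the presence of an odd cycle destroys the spectral symmetry about $0$ (which holds for bipartite underlying graphs by the diagonal $\pm 1$ switching argument underlying Theorem \ref{th-2.2}) responsible for the tightness of $\mathcal{E}_H(D_G) = 2\mu(G)$; routing through the characterization avoids that difficulty entirely.
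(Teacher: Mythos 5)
Your deduction is formally valid as a one-line consequence of Theorem \ref{lm.18}, but it is circular relative to how this result is actually established. Lemma \ref{lm.11} is \cite[Lemma 3.8]{Wei-Li}, while Theorem \ref{lm.18} collects \cite[Theorems 1.1, 1.2]{Wei-Li}: in that source the strict inequality for connected non-bipartite graphs is one of the lemmas used to \emph{prove} the equality characterization, not a corollary of it. The same logical order is mirrored in the present paper's $\mathbb{T}$-gain generalization: the strict inequality (Lemma \ref{lm15}) is proved first and then invoked in Lemma \ref{lm17} to reduce the equality case to the bipartite situation, which in turn yields Theorem \ref{th4(ii)}. The content of the lemma you are asked to prove is precisely the step that lets one pass from the bipartite equality characterization to the general one; quoting the full characterization to obtain the lemma inverts that dependency. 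You half-acknowledge this in your closing remark, but the ``difficulty'' you defer there is the entire substance of the lemma.

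The paper's actual argument (given for the more general Lemma \ref{lm15}; the mixed-graph statement itself is only quoted in the preliminaries) sidesteps any direct analysis of how an odd cycle breaks spectral symmetry by a bipartite-double reduction. Assume $\mathcal{E}(\Phi)=2\mu(G)$; then $G$ has a perfect matching (Lemma \ref{lm4}), so $\mu(G)=n/2$ and $\mathcal{E}(\Phi)=n$. Form the Kronecker product $\Phi\otimes K_2$: it is bipartite, has matching number $n$, has energy $2\mathcal{E}(\Phi)=2n$ by Lemma \ref{lm10}, and is \emph{connected} precisely because $G$ is non-bipartite (an odd closed walk in $G$ joins $(v,x)$ to $(v,y)$). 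The bipartite equality characterization (Theorem \ref{th6}, whose proof does not rely on Lemma \ref{lm15}) then forces $\Phi\otimes K_2\sim(K_{n,n},1)$, whence $2m=n^{2}$, i.e.\ $m=n^{2}/2>\binom{n}{2}$, a contradiction. If you want a non-circular proof, this is the route to take.
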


 \begin{lemma}[{\cite[Lemma 3.6]{Wei-Li}}]\label{lm14}
Let $ D_G $ be a mixed graph without isolated vertices. If $\mathcal{E}_{H}(D_G)=2\mu(G)  $, then $ G $ has a perfect matching.
\end{lemma}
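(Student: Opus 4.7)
The plan is to argue by contrapositive: assume $G$ has no perfect matching and deduce $\mathcal{E}_{H}(D_G) > 2\mu(G)$, contradicting the hypothesis. The strategy combines a vertex-deletion reduction that preserves the matching number with a strict energy-drop argument based on Cauchy interlacing.

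Since $G$ has no perfect matching, fix a maximum matching $M$ of $G$ and a vertex $v$ left unsaturated by $M$. Because $D_G$ has no isolated vertex, $v$ has some neighbour $u$, which by maximality of $M$ must be $M$-saturated; let $w$ denote the $M$-partner of $u$. Consider $D_G-w$: the set $M' := (M\setminus\{uw\})\cup\{uv\}$ is a matching of $G-w$ of size $\mu(G)$, so $\mu(G-w)=\mu(G)$. Applying the baseline lower bound $\mathcal{E}_{H}\geq 2\mu$ to the smaller graph $D_G-w$ (available by induction on $|V(G)|$, after discarding any newly created isolated vertices, which contribute nothing to either quantity) yields $\mathcal{E}_{H}(D_G-w)\geq 2\mu(G)$.

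Cauchy interlacing applied to the principal submatrix $H(D_G-w)$ of $H(D_G)$, together with the zero-trace identity $\mathcal{E}_{H}=2\sum_{\lambda_i>0}\lambda_i$, gives the weak inequality $\mathcal{E}_{H}(D_G)\geq \mathcal{E}_{H}(D_G-w)$ via a short case analysis on the count $p$ of positive eigenvalues: if $p' := \#\{\mu_j>0\}=p-1$ the drop is at least $\lambda_p>0$, while if $p'=p$ one has $\sum_{i=1}^{p}\lambda_i \geq \sum_{i=1}^{p}\mu_i$ componentwise. The analogous comparison holds for the negative parts.

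The main obstacle is upgrading this to a strict inequality. Suppose for contradiction that $\mathcal{E}_{H}(D_G)=\mathcal{E}_{H}(D_G-w)$. The case analysis then forces $p'=p$ and $\mu_i=\lambda_i$ for every positive index, and analogously for the negative eigenvalues. By the equality case of Cauchy interlacing, for each nonzero eigenvalue of $H(D_G)$ there is an eigenvector vanishing at coordinate $w$; collectively these eigenvectors span $\mathrm{range}\,H(D_G)$, so $\mathrm{range}\,H(D_G)\subseteq\{x:x_w=0\}$. Equivalently, $e_w\perp\mathrm{range}\,H(D_G)=(\ker H(D_G))^{\perp}$, hence $e_w\in\ker H(D_G)$, meaning the $w$-th row of $H(D_G)$ is zero and $w$ is isolated in $D_G$. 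This contradicts $uw\in E(G)$. Therefore $\mathcal{E}_{H}(D_G)>\mathcal{E}_{H}(D_G-w)\geq 2\mu(G)$, contradicting the hypothesis $\mathcal{E}_{H}(D_G)=2\mu(G)$, so $G$ must admit a perfect matching.
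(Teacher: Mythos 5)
Your overall architecture is sound and essentially coincides with the route this paper takes for its gain-graph analogue of this statement (Lemma \ref{lm4}): assume there is no perfect matching, find a vertex whose deletion leaves $\mu$ unchanged, apply the general bound $\mathcal{E}_{H}\geq 2\mu$ to the smaller graph (this is Theorem \ref{lm.18}, a standing result --- no fresh induction is needed), and show that the deletion strictly decreases the energy unless the deleted vertex is isolated. Your choice to delete $w$, the $M$-partner of a neighbour of the unsaturated vertex, rather than the unsaturated vertex itself as the paper does, is harmless; both preserve the matching number.

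The gap is in your strictness argument. From $\mathcal{E}_{H}(D_G)=\mathcal{E}_{H}(D_G-w)$ you correctly deduce $p'=p$ and $\mu_i=\lambda_i$ for every positive index (and the shifted identities for the negative ones). The equality case of interlacing does then give, for each nonzero eigenvalue $\theta$ of $H(D_G)$, \emph{some} eigenvector for $\theta$ vanishing at $w$; but the next step, ``collectively these eigenvectors span $\mathrm{range}\,H(D_G)$,'' does not follow when $\theta$ has multiplicity $m\geq 2$: interlacing only yields a subspace of the $\theta$-eigenspace of codimension at most one vanishing at $w$, not the whole eigenspace, so you cannot conclude $e_w\perp\mathrm{range}\,H(D_G)$. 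Hermitian adjacency matrices routinely have repeated nonzero eigenvalues (any $C_4$ or $K_{p,q}$ component, for instance), so this is not a degenerate case. The repair is short: your equality analysis shows the nonzero spectra of $H(D_G)$ and $H(D_G-w)$ coincide as multisets, hence $\tr\big(H(D_G)^{2}\big)=\tr\big(H(D_G-w)^{2}\big)$; but the left side equals $2|E(G)|$ and the right side equals $2\big(|E(G)|-d(w)\big)$, forcing $d(w)=0$ and contradicting $uw\in E(G)$. Alternatively, you may simply invoke the equality case of Theorem \ref{lm12} (or Theorem \ref{lm11}), which is exactly how the paper obtains strictness in Lemma \ref{lm13} before using it to prove Lemma \ref{lm4}.
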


A graph $ G $ is  \emph{bipartite graph} if its vertex set $ V(G) $ can be partitioned into two sets, $ X $ and $ Y $ such that every edge of $ G $  joins a vertex of $ X $ with a  vertex of $ Y $. If every vertex in $ X $ is adjacent to every vertex in $ Y $, then the graph $ G $ is called a \emph{complete bipartite graph}. If $ G $ is a complete bipartite graph with $ |X|=p $ and $ |Y|=q $, then $ G $ is denoted by $ K_{p,q} $. For instance, $ K_{p,p} $ is a complete bipartite graph with a perfect matching. A graph $ G $ is called an \emph{ $ r $-regular graph ( or regular graph )} if every vertex of $G $ has the same degree $ r $. A graph $ G $ is called a \emph{semiregular bipartite graph } with parameter $ (n_a, n_b, r_a, r_b) $ if $ G $ is a bipartite graph with $|X|=n_a $ and $ |Y|=n_b $ such that all the vertices of $ X $ have the same degree $ r_a $ ,and the vertices of $ Y $ have the same degree $ r_b $.

\begin{theorem}[{\cite[Theorem 3]{Gutman-Zare-Pena-Rada}}] \label{lm.19}
 If $ G $ is a $ d $-regular graph of $ n $ vertices, then $ \mathcal{E}(G)\geq n $. Equality holds if and only if each component is isomorphic to $ K_{d,d} $.
\end{theorem}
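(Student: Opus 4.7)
The plan is to derive $\mathcal{E}(G)\geq n$ from one sharp per-eigenvalue inequality arising from the spectral radius bound for a $d$-regular graph. Since $G$ is $d$-regular, every eigenvalue $\lambda$ of $A(G)$ satisfies $|\lambda|\leq d$, and hence $|\lambda|\geq \lambda^{2}/d$. Summing over all eigenvalues and using $\sum_{i=1}^{n}\lambda_{i}^{2}=\tr(A(G)^{2})=2|E(G)|=nd$ gives
$$ \mathcal{E}(G)=\sum_{i=1}^{n}|\lambda_{i}|\;\geq\;\frac{1}{d}\sum_{i=1}^{n}\lambda_{i}^{2}\;=\;n, $$
which settles the inequality in one line.

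For the equality statement, each per-eigenvalue inequality is tight precisely when $\lambda\in\{0,d,-d\}$, so $\mathcal{E}(G)=n$ forces $\spec(A(G))\subseteq\{0,\pm d\}$. I would then work component by component. Let $H$ be a connected component; it is itself $d$-regular with spectrum in $\{0,\pm d\}$. Perron--Frobenius makes $d$ a simple eigenvalue of $A(H)$, while the vanishing trace of $A(H)$ forces $-d$ to occur (so $H$ is bipartite, and $-d$ is also simple). Comparing $\tr(A(H)^{2})=|V(H)|\,d$ with $d^{2}+d^{2}$ (all remaining eigenvalues being $0$) gives $|V(H)|=2d$. An edge-count on each side of the bipartition then forces the two parts to have size exactly $d$, and $d$-regularity forces every vertex on one side to be adjacent to every vertex on the other, i.e.\ $H\cong K_{d,d}$. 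The converse is immediate from $\spec(K_{d,d})=\{d,0^{(2d-2)},-d\}$, which contributes $2d$ to the energy and $2d$ to the vertex count per component.

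The delicate step is the passage from the spectral datum $\spec(H)\subseteq\{0,\pm d\}$ to the structural conclusion $H\cong K_{d,d}$: it requires chaining Perron--Frobenius, the bipartiteness criterion for $-d\in\spec(H)$, a $\tr(A(H)^{2})$ computation to pin the vertex count, and an edge-counting argument on the bipartition. None of these is individually difficult, but they must be assembled for each connected component separately. The main inequality itself is a single-line spectral computation and poses no real obstacle.
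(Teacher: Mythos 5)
The paper does not prove this statement: Theorem~\ref{lm.19} is quoted verbatim from \cite[Theorem 3]{Gutman-Zare-Pena-Rada} as a preliminary, so there is no in-paper proof to compare against. Your argument is a correct, self-contained proof. The inequality $|\lambda_i|\geq \lambda_i^2/d$ together with $\sum_i\lambda_i^2=\tr(A(G)^2)=nd$ gives $\mathcal{E}(G)\geq n$, and your equality analysis is sound: $\spec(G)\subseteq\{0,\pm d\}$, then per component Perron--Frobenius gives $d$ simple, the zero trace forces $-d$ to occur (and with multiplicity one), so the component is bipartite; $\tr(A(H)^2)=2d^2$ pins $|V(H)|=2d$, and regularity plus the bipartition count yields $K_{d,d}$. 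The only caveat is the implicit assumption $d\geq 1$ (you divide by $d$), which is harmless since the statement is vacuous or false only in the degenerate case $d=0$, where a $0$-regular graph has energy $0$; the cited theorem is understood to concern graphs with edges.
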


Let $ G $ be a semiregular bipartite graph with partition size  $n_a $ and $ n_b $, and  the vertex degree of each vertex of first and second partition is  $ r_a$ and $ r_b $, respectively. The next result provides a bound of  $ \mathcal{E}(G).$
\begin{theorem}[{\cite[Theorem 5]{Gutman-Zare-Pena-Rada}}]\label{lm.20}
If $ G $ is a semiregular graph with the parameter $ (n_a, n_b, r_a, r_b ) $.  Then
$ \mathcal{E}(G)\geq n_a \sqrt{\frac{r_a}{r_b}}+n_b\sqrt{\frac{r_b}{r_a}} $ and equality occur if and only if every component of  $ G $ is $ K_{r_{a}, r_{b}} $.
\end{theorem}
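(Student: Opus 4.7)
The plan is to obtain the bound from the elementary inequality $\mathcal{E}(G)\cdot\rho(G)\geq\sum_i \lambda_i^2$, which is valid because $|\lambda_i|\leq\rho(G)$ for every eigenvalue, combined with two explicit spectral facts for a semiregular bipartite graph: $\sum_i \lambda_i^2 = 2|E(G)|$ and $\rho(G)=\sqrt{r_a r_b}$.

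First I would write $A(G)$ in its bipartite block form
\[
A(G)=\begin{pmatrix}0 & B \\ B^{T} & 0\end{pmatrix}
\]
and record that $\tr A(G)^{2}=2|E(G)|$; the double-counting identity $n_a r_a=n_b r_b$ rewrites this as $n_a r_a+n_b r_b$. Second, I would identify the spectral radius: the vector constant equal to $\sqrt{r_b}$ on the part of size $n_a$ and constant equal to $\sqrt{r_a}$ on the part of size $n_b$ is an eigenvector of $A(G)$ with eigenvalue $\sqrt{r_a r_b}$, while the standard bipartite matrix-norm bound $\|B\|_{2}^{2}\leq\|B\|_{1}\|B\|_{\infty}\leq r_a r_b$ gives $\rho(G)\leq\sqrt{r_a r_b}$. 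Substituting into the headline inequality yields
\[
\mathcal{E}(G)\;\geq\;\frac{n_a r_a+n_b r_b}{\sqrt{r_a r_b}}\;=\;n_a\sqrt{\tfrac{r_a}{r_b}}+n_b\sqrt{\tfrac{r_b}{r_a}}.
\]

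For the equality characterization, equality in $\sum_i\lambda_i^{2}\leq\rho(G)\mathcal{E}(G)$ forces every nonzero eigenvalue to have modulus exactly $\sqrt{r_a r_b}$. I would then pass to each connected component $C$ of $G$, which is itself semiregular bipartite with the same parameters $r_a,r_b$: by Perron--Frobenius, $\sqrt{r_a r_b}$ is a simple eigenvalue of $A(C)$, and the bipartite $\pm$-symmetry of its spectrum makes $-\sqrt{r_a r_b}$ simple as well, so $\rank A(C)=2$ and the biadjacency matrix of $C$ has rank $1$. A $\{0,1\}$-matrix of rank $1$ with constant row sums $r_a$, constant column sums $r_b$ and no zero rows or columns must be the all-ones matrix of shape $r_b\times r_a$; hence $C\cong K_{r_a,r_b}$, and no isolated vertices can appear because semiregularity forces each vertex to have positive degree.

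The main obstacle is the equality half. The inequality itself is one textbook step plus two routine spectral identities, but the structural conclusion depends on the component-by-component reduction and the Perron--Frobenius simplicity that pins down $\rank A(C)=2$, followed by the rank-one characterization of the $\{0,1\}$-biadjacency matrix that forces $K_{r_a,r_b}$.
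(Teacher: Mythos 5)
This statement is quoted in the paper as a known result (Theorem 5 of Gutman--Zare Firoozabadi--de la Pe\~na--Rada) and the paper gives no proof of it, so there is no in-paper argument to compare against; your proposal is essentially the standard proof from that reference, namely $\mathcal{E}(G)\geq \tr A(G)^2/\rho(G)=2|E(G)|/\rho(G)$ together with $\rho(G)=\sqrt{r_ar_b}$, and it is correct. The only slip is in your Perron eigenvector: with the convention that the part of size $n_a$ has degree $r_a$, the vector should take the value $\sqrt{r_a}$ on that part and $\sqrt{r_b}$ on the part of size $n_b$ (your assignment gives $(Av)_x=r_a\sqrt{r_a}$, which is not $\sqrt{r_ar_b}\cdot\sqrt{r_b}$ unless $r_a=r_b$); with the corrected vector one indeed gets eigenvalue $\sqrt{r_ar_b}$, and in any case equality in your chain already forces $\rho(G)=\sqrt{r_ar_b}$ via the Schur-test upper bound. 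The equality half is handled cleanly: every nonzero eigenvalue has modulus $\rho$, Perron--Frobenius plus bipartite symmetry pins each connected component to rank $2$, and the rank-one $\{0,1\}$ biadjacency matrix with constant row and column sums and no zero rows or columns must be all ones, giving $K_{r_a,r_b}$; the routine converse computation (each component contributes $2\sqrt{r_ar_b}$ and there are $n_a/r_b=n_b/r_a$ of them) is worth a sentence but is immediate.
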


For an $n \times n $ complex square matrix $A$, $\tr(A)$ denotes the trace of the matrix $A$. The next result is known as the von Neumann's trace theorem.
\begin{theorem}[{\cite{horn-john2}}] \label{th1}
Let $ A $ and $ B $ be two square complex matrices with singular values $ \lambda_1(A)\geq \lambda_2(A)\geq \dots \geq  \lambda_n(A) $ and $\lambda_1(B)\geq \lambda_2(B)\geq \dots \geq \lambda_n(B) $, respectively. Then
\begin{equation}
\Re(\tr(AB))\leq \sum\limits_{j=1}^{n}\lambda_{j}(A)\lambda_{j}(B).
\end{equation}
\end{theorem}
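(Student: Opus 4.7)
The plan is to reduce the statement, via the singular value decomposition, to a rearrangement-type inequality for nonnegative decreasing sequences coupled through a doubly substochastic matrix. This is the classical route to von Neumann's trace inequality.

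First, I would apply the SVD: write $A = U_1 \Sigma_A V_1^*$ and $B = U_2 \Sigma_B V_2^*$, where $\Sigma_A, \Sigma_B$ are the diagonal matrices of singular values in decreasing order and the $U_i, V_i$ are unitary. Using the cyclic invariance of the trace,
\[
\tr(AB) \;=\; \tr\bigl(\Sigma_A R\,\Sigma_B S\bigr),
\]
where $R := V_1^* U_2$ and $S := V_2^* U_1$ are both unitary.

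Second, I would expand this trace entrywise and take the real part to get
\[
\Re\bigl(\tr(AB)\bigr) \;\le\; \sum_{i,j} \lambda_i(A)\,\lambda_j(B)\,|R_{ij} S_{ji}|.
\]
Set $t_{ij} := |R_{ij} S_{ji}|$. The Cauchy--Schwarz inequality combined with the unitarity of $R$ and $S$ yields $\sum_j t_{ij} \le 1$ and $\sum_i t_{ij} \le 1$, so $(t_{ij})$ is doubly substochastic.

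Third, it remains to establish the combinatorial lemma: for non-increasing nonnegative sequences $(a_i),(b_j)$ and any doubly substochastic matrix $P = (p_{ij})$,
\[
\sum_{i,j} a_i b_j\, p_{ij} \;\le\; \sum_k a_k b_k.
\]
I would complete $P$ to an $(n{+}1)\times(n{+}1)$ doubly stochastic matrix by appending one row and one column that absorb the row and column deficits (whose totals coincide), extend $(a_i)$ and $(b_j)$ by a trailing zero, invoke Birkhoff's theorem to express the completed matrix as a convex combination of permutation matrices, and then apply the classical rearrangement inequality to each permutation summand. Applying this lemma to $(t_{ij})$, $(\lambda_i(A))$, $(\lambda_j(B))$ finishes the proof.

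The main obstacle is this final combinatorial step: the doubly-stochastic completion has to be arranged so that the appended row/column genuinely carry no weight after pairing with the zero-padded sequences, and the Birkhoff decomposition has to be combined with the rearrangement inequality without accidentally reversing any ordering. An alternative that avoids Birkhoff is a direct induction on $n$ by a peeling argument, subtracting diagonal mass from $(t_{ij})$; both routes work, but the Birkhoff path is conceptually the cleanest.
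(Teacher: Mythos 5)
The paper offers no proof of this statement: it is imported verbatim from Horn and Johnson as the classical von Neumann trace inequality, so there is nothing internal to compare your argument against. On its own merits, your outline follows the standard route and is sound in all but one step: the SVD reduction to $\Re\big(\tr(\Sigma_A R\,\Sigma_B S)\big)$ with $R=V_1^*U_2$, $S=V_2^*U_1$ unitary, the entrywise bound $\Re(\tr(AB))\le\sum_{i,j}\lambda_i(A)\lambda_j(B)\,t_{ij}$ with $t_{ij}=|R_{ij}S_{ji}|$, and the Cauchy--Schwarz verification that $(t_{ij})$ is doubly substochastic are all correct.

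The step that fails as written is the completion of $(t_{ij})$ to an $(n+1)\times(n+1)$ doubly stochastic matrix. Writing $r_i,c_j$ for the row and column sums of $T=(t_{ij})$ and $s=\sum_{i,j}t_{ij}$, the appended column must carry the deficits $1-r_i$ (total $n-s$) and the appended row the deficits $1-c_j$ (total $n-s$), which forces the corner entry to be $1-(n-s)$; this is negative whenever $s<n-1$ (take $T=0$ and $n\ge 2$), so no such completion exists in general. The coincidence of the two deficit totals is not enough. Two standard repairs: embed $T$ into the $2n\times 2n$ doubly stochastic matrix $\left(\begin{smallmatrix} T & I-D_r\\ I-D_c & T^{\top}\end{smallmatrix}\right)$ with $D_r=\diag(r_1,\dots,r_n)$, $D_c=\diag(c_1,\dots,c_n)$, and pad both sequences with $n$ trailing zeros (which preserves the non-increasing order, so Birkhoff plus rearrangement applies and the padded terms contribute nothing); or invoke the von Neumann--Mirsky theorem that every doubly substochastic matrix is entrywise dominated by a doubly stochastic one, which suffices here because the weights $\lambda_i(A)\lambda_j(B)$ are nonnegative. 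With either repair your argument closes correctly.
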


\begin{theorem}[{\cite[Corollary 2.4.]{Day-So}}]\label{lm12}
If $ A=\left[ \begin{array}{cc}
B & X\\ Y& C \end{array} \right]$ is any partition matrix with $ A $ and $ B $ are the square matrices, then $ \mathcal{E}(A)\geq \mathcal{E}(B) $. Equality occurs if and only if $ X, Y $ and $ C $ are all zero matrices.
\end{theorem}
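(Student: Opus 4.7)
\medskip

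\noindent\textbf{Proof proposal.} My plan is to derive the inequality from von Neumann's trace theorem (Theorem \ref{th1}) by choosing a clever test matrix built from the SVD of $B$, and then to deduce the equality characterization by tracing backwards through the two inequalities that the argument produces.

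First I would let $k$ be the size of $B$ and $n$ the size of $A$, and write a singular value decomposition $B = U_B\Sigma_BV_B^{*}$ with $U_B,V_B$ unitary $k\times k$ and $\Sigma_B=\diag(\sigma_1(B),\dots,\sigma_k(B))$, so that $\mathcal{E}(B)=\tr(\Sigma_B)$. Setting $W:=V_BU_B^{*}$ (a $k\times k$ unitary) and defining
\[
\tilde M \;:=\; \left[\begin{array}{cc} W & 0 \\ 0 & 0 \end{array}\right]\in\mathbb{C}^{n\times n},
\]
the singular values of $\tilde M$ are $1$ with multiplicity $k$ followed by $0$ with multiplicity $n-k$. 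A block computation then gives
\[
\tr(A\tilde M) \;=\; \tr(BW) \;=\; \tr(U_B\Sigma_BV_B^{*}V_BU_B^{*}) \;=\; \tr(\Sigma_B) \;=\; \mathcal{E}(B),
\]
which is real and nonnegative. Applying Theorem \ref{th1} to the pair $A,\tilde M$ with their ordered singular values yields the chain
\[
\mathcal{E}(B) \;=\; \Re\,\tr(A\tilde M) \;\leq\; \sum_{j=1}^{n}\sigma_j(A)\sigma_j(\tilde M) \;=\; \sum_{j=1}^{k}\sigma_j(A) \;\leq\; \sum_{j=1}^{n}\sigma_j(A) \;=\; \mathcal{E}(A),
\]
which is the desired inequality.

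For the equality case, if $\mathcal{E}(A)=\mathcal{E}(B)$ then both inequalities in the chain above are saturated. The second forces $\sigma_{k+1}(A)=\cdots=\sigma_n(A)=0$, so $\rank(A)\leq k$. The equality clause in the von Neumann trace inequality forces a simultaneous-SVD alignment of $A$ and $\tilde M$; since the left and right singular vectors of $\tilde M$ corresponding to its nonzero singular values are supported entirely in the first $k$ coordinates, the same must hold for every left/right singular vector of $A$ corresponding to a nonzero singular value. Consequently the range and co-range of $A$ both lie in the top coordinate subspace, which (reading off blocks by applying $A$ and $A^{*}$ to vectors of the form $(0,v)^{T}$) is precisely the statement that $X=0$, $Y=0$ and $C=0$. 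The reverse implication is immediate, since then $A=\diag(B,0)$ has the same singular values as $B$.

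The main technical obstacle is the equality step: one must not merely observe that the singular values of $A$ in positions $k+1,\dots,n$ vanish, but also extract the structural constraint on the singular \emph{vectors} that follows from saturating von Neumann's inequality, and then translate this subspace information into the vanishing of the three blocks $X,Y,C$. The inequality direction itself is essentially a one-line consequence of Theorem \ref{th1} once $\tilde M$ is chosen correctly.
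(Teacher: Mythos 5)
A preliminary remark: the paper does not actually prove this statement; it is imported verbatim from Day and So (their Corollary 2.4), so there is no internal proof to compare yours against. On its own terms, your argument for the inequality is correct and efficient: with $B=U_B\Sigma_BV_B^{*}$ and $\tilde M=\diag(V_BU_B^{*},0)$ one indeed has $\tr(A\tilde M)=\tr(BV_BU_B^{*})=\tr(\Sigma_B)=\mathcal{E}(B)$, the singular values of $\tilde M$ are $k$ ones and $n-k$ zeros, and Theorem \ref{th1} gives $\mathcal{E}(B)\le\sum_{j=1}^{k}\sigma_j(A)\le\mathcal{E}(A)$, where $\sigma_j(\cdot)$ denotes the $j$-th largest singular value.

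The equality direction, however, has a genuine gap. Theorem \ref{th1} as stated carries no equality clause, and the ``simultaneous singular value decomposition'' characterization of equality in von Neumann's trace inequality that you invoke is itself a nontrivial theorem which you neither prove nor cite; nothing in the paper supplies it. Granting that characterization, your deduction does go through (the top-$k$ left and right singular subspaces of $A$ must coincide with $\mathrm{span}(e_1,\dots,e_k)$, which together with $\rank(A)\le k$ forces $X=Y=C=0$), but as written the decisive step rests on an unproved strengthening of Theorem \ref{th1} whose proof is comparable in difficulty to the statement you are establishing. A self-contained route using only what the paper provides: apply Theorem \ref{lm11} with $A_{11}=B$ and $A_{22}=C$ to get $\mathcal{E}(A)\ge\mathcal{E}(B)+\mathcal{E}(C)\ge\mathcal{E}(B)$. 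If equality holds, then $\mathcal{E}(C)=0$, so $C=0$, and the equality clause of Theorem \ref{lm11} yields unitaries $U,V$ for which the Hermitian matrix $\left[\begin{array}{cc} UB & UX\\ VY & 0\end{array}\right]$ is positive semidefinite; a positive semidefinite matrix with a vanishing diagonal block has vanishing off-diagonal blocks in the corresponding rows and columns, so $UX=0$ and $VY=0$, whence $X=Y=0$. This is essentially how Day and So themselves deduce the corollary from their Theorem 2.2.
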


\begin{theorem}[{\cite[Theorem 2.2]{Day-So}}]\label{lm11}
 Let $ A= \left [\begin{array}{cc}
 A_{11} & A_{12}\\ A_{21} & A_{22}
\end{array}\right] $ be a complex block matrix such that both the diagonal blocks are square matrices. Then $ \mathcal{E}(A_{11})+\mathcal{E}(A_{22}) \leq \mathcal{E}(A)  $, where $ \mathcal{E}(A) $ is the sum of the singular values of $ A $ . Equality occurs if and only if there exist unitary matrices $ U $ and $ V $ such that  $ \left [\begin{array}{cc}
UA_{11} & UA_{12}\\ VA_{21} & VA_{22}
\end{array}\right] $ is positive semidefinite.
\end{theorem}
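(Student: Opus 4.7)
The plan is to realize $\mathcal{E}(A)$ as a maximum of real traces over unitary matrices, test this maximum against a well-chosen block-diagonal unitary, and then extract the equality condition from when the test is tight. The key fact to exploit is the dual formulation of the Schatten $1$-norm, which is essentially a corollary of von Neumann's trace theorem (Theorem \ref{th1}): for any square complex matrix $M$ and any unitary $W$ of the same size, $\Re(\tr(WM)) \leq \mathcal{E}(M)$, with equality whenever $W$ is obtained from a polar decomposition of $M$.

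First I would write polar decompositions $A_{11} = U^{*}|A_{11}|$ and $A_{22} = V^{*}|A_{22}|$ of the two square diagonal blocks, with $U,V$ unitary, so that $\tr(UA_{11}) = \tr(|A_{11}|) = \mathcal{E}(A_{11})$ and $\tr(VA_{22}) = \mathcal{E}(A_{22})$. Forming the block-diagonal unitary $W = \diag(U,V)$ of the size compatible with $A$, a direct computation gives
\begin{equation*}
\tr(WA) = \tr(UA_{11}) + \tr(VA_{22}) = \mathcal{E}(A_{11}) + \mathcal{E}(A_{22}),
\end{equation*}
which is already a non-negative real number. The dual bound applied to this particular $W$ then yields $\mathcal{E}(A_{11}) + \mathcal{E}(A_{22}) = \Re(\tr(WA)) \leq \mathcal{E}(A)$, which is the required inequality.

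For the equality characterization I would establish and use the auxiliary fact that for a square matrix $M$, one has $\tr(M) = \mathcal{E}(M)$ as real numbers if and only if $M$ is positive semidefinite; this is proved by taking an SVD $M = P\Sigma Q^{*}$, rewriting $\tr(M) = \sum_i \sigma_i (Q^{*}P)_{ii}$, and using $|(Q^{*}P)_{ii}| \leq 1$ together with $\sigma_i \geq 0$ to force $Q^{*}P$ to act as the identity on the range of $\Sigma$, which in turn forces $M = |M|$. Since $W$ is unitary we have $\mathcal{E}(WA) = \mathcal{E}(A)$, so equality in the main inequality is equivalent to $\tr(WA) = \mathcal{E}(WA)$, i.e.\ to $WA$ being PSD; unpacking $WA$ block-by-block gives exactly the condition that $\begin{pmatrix} UA_{11} & UA_{12} \\ VA_{21} & VA_{22} \end{pmatrix}$ is positive semidefinite.

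The main obstacle is this equality characterization: the equality case of von Neumann's inequality is a little subtle, and one must be careful to convert ``trace equals nuclear norm'' into ``matrix is PSD'' without assuming invertibility or distinct singular values. Once the SVD manipulation is handled cleanly, the remainder of the argument is a direct and essentially mechanical application of the dual formula to the block-diagonal test matrix $W = \diag(U,V)$.
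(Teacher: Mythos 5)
The paper does not prove this statement at all: it is imported verbatim as Theorem 2.2 of Day and So, so there is no in-house argument to compare against and your proof has to stand on its own. It does. The duality step is valid ($\Re(\tr(WM))\leq \mathcal{E}(M)$ for unitary $W$ follows from Theorem \ref{th1} because every singular value of a unitary equals $1$), the test matrix $W=\diag(U,V)$ built from polar decompositions of the diagonal blocks gives $\tr(WA)=\mathcal{E}(A_{11})+\mathcal{E}(A_{22})$ exactly as you compute, and your auxiliary lemma that $\tr(M)=\mathcal{E}(M)$ forces $M$ to be positive semidefinite is correctly handled: writing $T=Q^{*}P$, the identity $\tr(M)=\sum_i\sigma_i t_{ii}$ together with $|t_{ii}|\leq 1$ forces $t_{ii}=1$ whenever $\sigma_i>0$, hence the corresponding rows and columns of the unitary $T$ are standard basis vectors, $T\Sigma=\Sigma$, and $M=Q\Sigma Q^{*}$. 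The only place to tighten is the equality characterization: your chain of equivalences is phrased for the particular $W$ coming from the polar decompositions, which correctly yields the \emph{only if} half (equality produces explicit unitaries making the block matrix PSD), but the \emph{if} half must start from \emph{arbitrary} unitaries $U',V'$ for which $W'A$ is positive semidefinite. That half is a one-line addition: the diagonal blocks $U'A_{11}$ and $V'A_{22}$ of a positive semidefinite matrix are themselves positive semidefinite, so $\tr(U'A_{11})=\mathcal{E}(A_{11})$ and $\tr(V'A_{22})=\mathcal{E}(A_{22})$, whence $\mathcal{E}(A)=\mathcal{E}(W'A)=\tr(W'A)=\mathcal{E}(A_{11})+\mathcal{E}(A_{22})$. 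With that remark added, the argument is a complete and standard proof of the cited result.
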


\begin{theorem}\cite{Cheng-Horn-Li}\label{lm2.15}
Let $ C , C_1$ and $C_2$ be three square complex matrices of order $ n $ such that \break $ C=C_1+C_2 $. If $ S_j(\cdot{}) $ is the $j$-th singular value of corresponding matrix, then \break $ \sum\limits_{p}S_{p}(C)\leq \sum\limits_{p}S_{p}(C_1)+\sum\limits_{p}S_{p}(C_2)$.
\end{theorem}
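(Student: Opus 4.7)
The plan is to reduce the stated triangle inequality for the sum of singular values to von Neumann's trace theorem (Theorem \ref{th1}) via a duality argument. The key observation is that the functional $A\mapsto \sum_{p}S_p(A)$ (the nuclear / trace norm) admits the variational characterization
\begin{equation*}
\sum_{p=1}^{n}S_{p}(A) \;=\; \max\bigl\{\,\Re\,\tr(UA)\;:\; U\in\mathbb{C}^{n\times n}\text{ unitary}\,\bigr\}.
\end{equation*}
Once this is in hand, the triangle inequality follows in a single line, since if $U_{0}$ is a unitary attaining the maximum for $C$, then by linearity of the trace
\begin{equation*}
\sum_{p}S_{p}(C)=\Re\,\tr(U_{0}C_{1})+\Re\,\tr(U_{0}C_{2})\;\leq\;\sum_{p}S_{p}(C_{1})+\sum_{p}S_{p}(C_{2}),
\end{equation*}
because each $\Re\,\tr(U_{0}C_{j})$ is bounded above by $\max_{U}\Re\,\tr(UC_{j})=\sum_{p}S_{p}(C_{j})$.

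So the real work is to establish the variational formula, and this is exactly where Theorem \ref{th1} enters. For the upper bound, I would take any unitary $U$: its singular values are all equal to $1$, so Theorem \ref{th1} applied to the product $UA$ yields $\Re\,\tr(UA)\leq \sum_{j}S_{j}(U)S_{j}(A)=\sum_{j}S_{j}(A)$. For attainment, I would invoke the singular value decomposition $A=V\Sigma W^{*}$ with $V,W$ unitary and $\Sigma=\diag(S_{1}(A),\dots,S_{n}(A))$, and choose $U_{0}=WV^{*}$, which is unitary; then $U_{0}A=WV^{*}V\Sigma W^{*}=W\Sigma W^{*}$, whence $\tr(U_{0}A)=\tr(\Sigma)=\sum_{j}S_{j}(A)$ is real and equals the claimed maximum.

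The main obstacle is conceptual rather than computational: recognizing that Theorem \ref{th1} furnishes precisely the dual pairing needed between the sum of singular values and the unitary group. After this, combining the SVD-based attainment with the linearity of the trace makes the additive subadditivity of $\sum_{p}S_{p}(\cdot{})$ transparent. An alternative route would be to prove the Ky Fan dominance theorem (that $\sum_{p=1}^{k}S_{p}$ is a norm for every $k$, not just $k=n$) by noting that each $\sum_{p=1}^{k}S_{p}(A)=\max\{|\tr(U^{*}AV)|:U,V\text{ are }n\times k\text{ partial isometries}\}$, but for the present statement this is overkill; the one-line dual formula above, read off from Theorem \ref{th1}, suffices.
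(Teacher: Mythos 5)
Your proof is correct. Note, however, that the paper does not prove Theorem \ref{lm2.15} at all: it is imported from the cited reference and used as a black box (in the proof of Theorem \ref{th-5.3}), so there is no internal argument to compare yours against. What you have written is the standard duality proof of subadditivity of the trace norm, and every step checks out: the upper bound $\Re\tr(UA)\leq\sum_{j}S_{j}(A)$ for unitary $U$ is exactly Theorem \ref{th1} with all singular values of $U$ equal to $1$; attainment via $U_{0}=WV^{*}$ from the SVD $A=V\Sigma W^{*}$ gives $U_{0}A=W\Sigma W^{*}$ with real trace $\sum_{j}S_{j}(A)$; and linearity of the trace then yields the triangle inequality in one line. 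Your argument is thus a self-contained proof, relying only on a result the paper already states, of a fact the paper merely cites.
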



\section{Energy of a vertex of $ \mathbb{T} $-gain graphs}\label{ver-ener}

The energy of a vertex in an undirected graph is studied  in \cite{Vertexenergy1}. In this section, first we extend this notion for the $\mathbb{T}$-gain graphs, and establish some of the properties.
\begin{definition}\label{def1}
The energy of a vertex $ v_i $ of a $ \mathbb{T} $-gain graph $ \Phi $ is denoted by $ \mathcal{E}_{\Phi}(v_i) $ and is defined by
\begin{equation*}
\mathcal{E}_{\Phi}(v_i)=|A(\Phi)|_{ii}, \text{ for } i=1,2 \dots,n,
\end{equation*}
where $ |A(\Phi)|_{ii} $ is the $ (i,i) $-th entry of $ (A(\Phi)A(\Phi)^{*})^{\frac{1}{2}} $.
\end{definition}

It is easy to see that, the energy of a $ \mathbb{T} $-gain graph can be expressed as the sum of the energies of  vertices of $ \Phi $. That is,
\begin{equation}\label{eq2}
\mathcal{E}(\Phi)=\mathcal{E}_{\Phi}(v_1)+\mathcal{E}_{\Phi}(v_2)+\dots+\mathcal{E}_{\Phi}(v_n).
\end{equation}
	  	
Energy of a vertex of a $ \mathbb{T} $-gain graph can be obtained from the eigenvalues and the eigenvectors of $\Phi$. This is done in the next Lemma, and this result is an extension of Lemma \ref{lm-2.2} for the $\mathbb{T}$-gain graphs.  	
\begin{lemma} \label{lemma.1}
Let $ \Phi $ be a $ \mathbb{T} $-gain graph with the vertex set  $ \{v_1, v_2, \dots, v_n\}$. Then
\begin{equation*}
\mathcal{E}_{\Phi}(v_i)=\sum\limits_{j=1}^{n}Q_{ij}|\lambda_j|, \text{  for } i=1,2, \dots n.
\end{equation*}
where $ Q_{ij}=|q_{ij}|^{2} $ and $ Q=(q_{ij}) $ is the unitary matrix whose columns are the eigenvectors of $ \Phi $ and $ \lambda_j $ is the $ j $-th eigenvalue of $ \Phi $.
\end{lemma}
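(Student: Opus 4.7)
The plan is to mimic the proof of Lemma \ref{lm-2.2} but with the spectral theorem for Hermitian (rather than real symmetric) matrices, since $A(\Phi)$ is Hermitian and its eigenvectors form a unitary (rather than orthogonal) basis.

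First I would apply the spectral theorem to $A(\Phi)$: write $A(\Phi) = Q \Lambda Q^{*}$, where $\Lambda = \diag(\lambda_1, \ldots, \lambda_n)$ is real diagonal and $Q = (q_{ij})$ is the unitary matrix whose $j$th column is a unit eigenvector corresponding to $\lambda_j$. Since the $\lambda_j$ are real, $\Lambda^{*} = \Lambda$, and so
\[
A(\Phi) A(\Phi)^{*} \;=\; Q \Lambda Q^{*} Q \Lambda Q^{*} \;=\; Q \Lambda^{2} Q^{*}.
\]
Taking the unique positive semidefinite square root gives
\[
|A(\Phi)| \;=\; \bigl(A(\Phi) A(\Phi)^{*}\bigr)^{1/2} \;=\; Q \, \diag(|\lambda_1|, \ldots, |\lambda_n|) \, Q^{*}.
\]

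Second, I would read off the $(i,i)$ entry. Expanding the matrix product,
\[
|A(\Phi)|_{ii} \;=\; \sum_{j=1}^{n} q_{ij} \, |\lambda_j| \, \overline{q_{ij}} \;=\; \sum_{j=1}^{n} |q_{ij}|^{2} \, |\lambda_j| \;=\; \sum_{j=1}^{n} Q_{ij} |\lambda_j|,
\]
which, by Definition \ref{def1}, equals $\mathcal{E}_{\Phi}(v_i)$, establishing the claim.

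There is no real obstacle here; the only thing to be careful about is the use of $Q^{*}$ rather than $Q^{T}$ in the complex setting and the observation that the modulus squared $|q_{ij}|^{2}$ (not $q_{ij}^{2}$) naturally appears because $q_{ij}$ is now a complex entry. As a sanity check, summing the identity over $i$ gives $\sum_{i} \mathcal{E}_{\Phi}(v_i) = \sum_{j} |\lambda_j| \sum_{i} |q_{ij}|^{2} = \sum_{j} |\lambda_j| = \mathcal{E}(\Phi)$, consistent with \eqref{eq2} since the columns of $Q$ are unit vectors.
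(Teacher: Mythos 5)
Your proof is correct and follows the same route as the paper: diagonalize the Hermitian matrix $A(\Phi)$ by a unitary $Q$, form $|A(\Phi)| = Q\,\diag(|\lambda_1|,\dots,|\lambda_n|)\,Q^{*}$, and read off the $(i,i)$ entry. The paper compresses the final computation into ``it is easy to see,'' whereas you write it out explicitly; there is no substantive difference.
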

\begin{proof}
 Since $ A(\Phi) $ is Hermitian, so there exists a unitary matrix $ Q=(q_{ij}) $ such that $ A(\Phi) = Q D Q^{*}$, where $ D=\diag(\lambda_{1}, \lambda_{2}, \dots, \lambda_{n}) $. Therefore, the columns of $ Q $ are eigenvectors of $ A(\Phi) $. Now, it is easy to see that \begin{equation*}
\mathcal{E}_{\Phi}(v_i)=\sum\limits_{j=1}^{n}Q_{ij}|\lambda_j|, \text{  for } i=1,2, \dots n.
\end{equation*}
where $ Q_{ij}=|q_{ij}|^{2} $.
\end{proof}

Let $ \mathbb{C}_{n\times n} $ denote the set of all $ n\times n $ complex matrices. Consider the function $ \Omega_{i}: \mathbb{C}_{n\times n}\rightarrow \mathbb{C} $ such that $ \Omega_{i}(B)=b_{i,i} $, for $ i=1,2,\dots, n$,  where $ b_{i,i} $ is the $ (i,i)$-th entry of $ B$. Let $ \Phi=(G, \varphi) $ be a $ \mathbb{T} $-gain graph on $ n $ vertices with adjacency matrix $ A(\Phi) $. Then, $ \Omega_{i}(|A(\Phi)|)=\mathcal{E}_{\Phi}(v_i) $, $ i=1,2, \dots, n $. Now, it is clear that, for any two complex matrices $ B $  and $ C $, $ |\Omega_{i}(BC)|\leq \Omega_i(|BC|) $. Since $ \Omega_i $ is a positive linear functional, so the H$ \ddot{\text{o}}$lder inequality holds, see \cite{Vertexenergy1}. That is, if $ 0< s,t \leq \infty $ with $ 1=\frac{1}{s}+\frac{1}{t} $, then
\begin{equation} \label{eq8}
\Omega_{i}\big(|BC|\big)\leq \Omega_{i}\big(|B|^{s}\big)^{\frac{1}{s}}\Omega_{i}\big(|C|^{t}\big)^{\frac{1}{t}}.
\end{equation}
	 \begin{lemma}
	Let $ \Phi=(G, \varphi) $ be a $ \mathbb{T} $-gain graph on $ G $ of $ n $ vertices and at least one edge. If $ r\geq 2 $, $ 0< s, t<\infty $ such that $ \frac{1}{s}+\frac{1}{t}=1 $, then
	\begin{equation}\label{eq4}
	\frac{\Big(\Omega_{p}\big(|A(\Phi)|^{r}\big)\Big)^{t}}{\Big(\Omega_{p}\big(|A(\Phi)|^{s(r-1)+1}\big)\Big)^{\frac{t}{s}}} \leq \mathcal{E}_{\Phi}(v_p), \qquad p=1,2, \dots, n.
	\end{equation}
	 \end{lemma}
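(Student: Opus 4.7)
The plan is to view this as a straightforward consequence of the H\"older-type inequality \eqref{eq8} for the positive linear functional $\Omega_p$, with a carefully chosen factorization of $|A(\Phi)|^r$. The key algebraic observation is that since $\frac{1}{s}+\frac{1}{t}=1$, we have $\frac{s}{t}=s-1$, so the exponent $s(r-1)+1$ appearing on the left-hand side of \eqref{eq4} should arise as $s\bigl(r-\frac{1}{t}\bigr)$.

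Guided by this, I would split
\[
|A(\Phi)|^{r} \;=\; B\,C, \qquad B:=|A(\Phi)|^{\,r-\frac{1}{t}}, \quad C:=|A(\Phi)|^{\frac{1}{t}}.
\]
Since $r\ge 2$, both exponents are nonnegative, so $B$ and $C$ are positive semidefinite; moreover they are simultaneously diagonalizable (both are polynomials in $|A(\Phi)|$), hence $BC=CB$ is positive semidefinite and $|BC|=BC=|A(\Phi)|^{r}$. Applying \eqref{eq8} then gives
\[
\Omega_{p}\bigl(|A(\Phi)|^{r}\bigr)
\;=\;\Omega_{p}\bigl(|BC|\bigr)
\;\le\;\Omega_{p}\bigl(|B|^{s}\bigr)^{\frac{1}{s}}\,\Omega_{p}\bigl(|C|^{t}\bigr)^{\frac{1}{t}}.
\]
Using $|B|^{s}=|A(\Phi)|^{s(r-1/t)}=|A(\Phi)|^{s(r-1)+1}$ (via $s/t=s-1$) and $|C|^{t}=|A(\Phi)|$, this becomes
\[
\Omega_{p}\bigl(|A(\Phi)|^{r}\bigr)
\;\le\;\Omega_{p}\bigl(|A(\Phi)|^{s(r-1)+1}\bigr)^{\frac{1}{s}}\,\Omega_{p}\bigl(|A(\Phi)|\bigr)^{\frac{1}{t}}.
\]

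Raising both sides to the $t$-th power and recalling that $\Omega_{p}(|A(\Phi)|)=\mathcal{E}_{\Phi}(v_p)$, I would obtain
\[
\Omega_{p}\bigl(|A(\Phi)|^{r}\bigr)^{t}
\;\le\;\Omega_{p}\bigl(|A(\Phi)|^{s(r-1)+1}\bigr)^{\frac{t}{s}}\,\mathcal{E}_{\Phi}(v_p),
\]
and rearranging yields \eqref{eq4}, provided the denominator $\Omega_{p}\bigl(|A(\Phi)|^{s(r-1)+1}\bigr)^{t/s}$ is nonzero. This is where the assumption that $G$ has at least one edge enters: since $|A(\Phi)|$ is positive semidefinite with $A(\Phi)\ne 0$ and every diagonal entry of a positive semidefinite matrix is nonnegative, the hypothesis of a single edge forces at least one diagonal entry of $|A(\Phi)|$ to be positive at $v_p$ whenever $v_p$ lies on a component with edges; one then checks that if $v_p$ is isolated both sides of \eqref{eq4} are $0$ (with the convention $0/0=0$) so the inequality is trivial in that case.

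The only non-routine step is identifying the correct factorization $|A(\Phi)|^{r}=B\cdot C$; once the exponents $r-\frac{1}{t}$ and $\frac{1}{t}$ are chosen, everything reduces to the identity $s/t=s-1$ and to invoking \eqref{eq8}. Thus I expect the \emph{only} real obstacle to be bookkeeping the exponents correctly, and handling the degenerate case (isolated vertex) where the denominator could vanish.
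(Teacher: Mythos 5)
Your proposal is correct and follows essentially the same route as the paper: the identical factorization $B=|A(\Phi)|^{r-\frac{1}{t}}$, $C=|A(\Phi)|^{\frac{1}{t}}$, followed by the H\"older inequality \eqref{eq8} and the exponent identity $s/t=s-1$. Your extra care about the denominator possibly vanishing (isolated vertex) is a reasonable addition that the paper glosses over, but it does not change the argument.
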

	 \begin{proof}
	 Let $ B=|A(\Phi)|^{r-\frac{1}{t}} $ and $ C=|A(\Phi)|^{\frac{1}{t}} $. Then, by the H$ \ddot{\text{o}}$lder inequality \eqref{eq8}, we have
	
	 \begin{equation*}
	 \Omega_{p}\big(|A(\Phi)|^{r}\big)=\Omega_{p}\big(|A(\Phi)|^{r-\frac{1}{t}}|A(\Phi)|^{-\frac{1}{t}}\big)\leq\Big(\Omega_{p}\big(|A(\Phi)|^{sr-\frac{s}{t}}\big)^{\frac{1}{s}}\Big)\Omega_{p}\Big(|A(\Phi)|\Big)^{\frac{1}{t}}
	 \end{equation*}
	 That is,
	 \begin{equation*}
	 \Big(\Omega_{p}\big(|A(\Phi)|^{r}\big)\Big)^{t}
	 \leq\Big(\Omega_{p}\big(|A(\Phi)|^{sr-\frac{s}{t}}\big)^{\frac{t}{s}}\Big)\Omega_{p}\big(|A(\Phi)|\big)
	 \end{equation*}
	 Since $ 0< s,t<\infty $, and $ \frac{1}{s}+\frac{1}{t}=1 $, so $ rs-\frac{s}{t}=s(r-1)+1 $. Therefore,
	 \begin{equation*}
	 \frac{\Big(\Omega_{p}\big(|A(\Phi)|^{r}\big)\Big)^{t}}{\Big(\Omega_{p}\big(|A(\Phi)|^{s(r-1)+1}\big)\Big)^{\frac{t}{s}}} \leq \Omega_{p}(|A(\Phi)|)=\mathcal{E}_{\Phi}(v_p), \qquad p=1,2 \cdots, n.
	 \end{equation*}
	 \end{proof}
	
	 Let $ M_{k}(\Phi,p) $ denote the sum of the gains of directed $ k $-walk from the vertex $ v_p $ to itself,  in the $ \mathbb{T} $-gain graph $ \Phi $. In the next result, we establish a  bound of the vertex energy  $ \mathcal{E}_{\Phi}(v_p) $ in terms of $ M_{k}(\Phi, p) $ and the vertex degree .

\begin{lemma} \label{Lm6}
Let $ \Phi=(G, \varphi) $ be any $ \mathbb{T} $-gain graph of $ n $ vertices  with at least one edge. Then
\begin{equation*}
\frac{d_p^{\frac{3}{2}}}{M_{4}(\Phi, p)^{\frac{1}{2}}} \leq  \mathcal{E}_{\Phi}(v_p), \text{ for all $ p=1,2, \dots, n. $}
\end{equation*}
\end{lemma}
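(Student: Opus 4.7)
The plan is to deduce the bound by a single application of inequality \eqref{eq4} with a well-chosen triple $(r,s,t)$. First I would make the two identifications that bridge the analytic bound \eqref{eq4} with the combinatorial quantities $d_p$ and $M_4(\Phi,p)$. Since $A(\Phi)$ is Hermitian, $|A(\Phi)|^{2}=A(\Phi)A(\Phi)^{*}=A(\Phi)^{2}$, and therefore
\[
\Omega_{p}\bigl(|A(\Phi)|^{2}\bigr)=\bigl(A(\Phi)^{2}\bigr)_{pp}=\sum_{q=1}^{n}|a_{pq}|^{2}=d_{p},
\]
because $|a_{pq}|\in\{0,1\}$ with $|a_{pq}|=1$ exactly when $v_p\sim v_q$. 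Similarly, $|A(\Phi)|^{4}=A(\Phi)^{4}$, so
\[
\Omega_{p}\bigl(|A(\Phi)|^{4}\bigr)=\bigl(A(\Phi)^{4}\bigr)_{pp}=M_{4}(\Phi,p),
\]
since the $(p,p)$-entry of $A(\Phi)^{4}$ is precisely the sum of gains along closed $4$-walks at $v_p$.

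Next, I would select the parameters so that \eqref{eq4} produces $d_p$ on one side and $M_4(\Phi,p)$ on the other. Taking $r=2$, $s=3$, $t=\tfrac{3}{2}$ (which satisfies $\tfrac{1}{s}+\tfrac{1}{t}=1$), the exponent on the denominator factor becomes $s(r-1)+1=4$, and the numerator factor becomes $|A(\Phi)|^{r}=|A(\Phi)|^{2}$. Plugging in, \eqref{eq4} reads
\[
\frac{\bigl(\Omega_{p}(|A(\Phi)|^{2})\bigr)^{3/2}}{\bigl(\Omega_{p}(|A(\Phi)|^{4})\bigr)^{1/2}}\le \mathcal{E}_{\Phi}(v_{p}),
\]
which, after substituting the two identifications above, becomes exactly $d_p^{3/2}/M_4(\Phi,p)^{1/2}\le \mathcal{E}_{\Phi}(v_p)$.

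There is essentially no obstacle here: the one thing to watch is that $M_4(\Phi,p)>0$ whenever $d_p\ge 1$, so the quotient on the left is well defined, and if $d_p=0$ the inequality is trivial. The only "creative" step is guessing the correct exponents, which I found by setting $a+b=2$, $as=4$, $bt=1$ in the Hölder step so that the three relevant moments line up with $d_p$, $M_4(\Phi,p)$, and $\mathcal{E}_{\Phi}(v_p)$; this gives the unique solution $s=3$, $t=3/2$ used above.
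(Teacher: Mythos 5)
Your proof is correct and follows essentially the same route as the paper: both substitute $r=2$, $s=3$, $t=\tfrac{3}{2}$ into inequality \eqref{eq4} and then identify $\Omega_{p}(A(\Phi)^{2})=d_{p}$ and $\Omega_{p}(A(\Phi)^{4})=M_{4}(\Phi,p)$. Your write-up actually supplies slightly more justification for these identifications (using that $|A(\Phi)|^{2k}=A(\Phi)^{2k}$ for Hermitian $A(\Phi)$) than the paper does.
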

\begin{proof}

In the Inequality \eqref{eq4}, we substitute $ r=2,s=3 $ and $ t=\frac{3}{2} $. Then we get
\begin{equation*}
\frac{\Omega_{p}(A(\Phi)^{2})^{\frac{3}{2}}}{\Omega_{p}(A(\Phi)^{4})^{\frac{1}{2}}} \leq \mathcal{E}_{\Phi}(v_p), \qquad p=1,2, \dots, n.
\end{equation*}
Since $\Omega_{p}(A(\Phi)^{4})=M_{4}(\Phi, p)  $ and $ \Omega_{p}(A(\Phi)^{2})=d_p $, So the corollary follows.
\end{proof}

Now, we establish a  bound for $ \mathcal{E}_{\Phi}(v_j)$ for $ \mathbb{T} $-gain graph in terms of vertex degree of $ v_j $ and the largest vertex degree $ \Delta(G) $. For undirected graph $ G $, these results are presented in \cite{Vertexenergy1}.

\begin{theorem}\label{th7}
Let $ \Phi=(G, \varphi) $ be any connected $ \mathbb{T} $-gain graph with at least one edge. Then
\begin{equation}\label{ver-enr-deg-bou}
\mathcal{E}_{\Phi}(v_j) \geq \sqrt{\frac{d_j}{\Delta(G)}}, \qquad\text{for all $ v_j \in V(G) $.}
\end{equation}
 Equality holds if and only if $ \Phi\sim(K_{d_j, \Delta(G)}, 1) $.
\end{theorem}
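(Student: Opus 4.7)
The plan is to combine Lemma \ref{Lm6} with a direct combinatorial bound on $M_4(\Phi,p)$. Lemma \ref{Lm6} already supplies $\mathcal{E}_\Phi(v_p)\geq d_p^{3/2}/M_4(\Phi,p)^{1/2}$, so it suffices to establish $M_4(\Phi,p)\leq d_p^2\,\Delta(G)$. Since $A(\Phi)$ is Hermitian, $A(\Phi)^2$ is positive semidefinite and
$$M_4(\Phi,p)=(A(\Phi)^4)_{pp}=\sum_{c}\bigl|(A(\Phi)^2)_{pc}\bigr|^2.$$
Writing $(A(\Phi)^2)_{pc}=\sum_{b}\varphi(\overrightarrow{e_{p,b}})\varphi(\overrightarrow{e_{b,c}})$, where the sum runs over $b\in N(p)\cap N(c)$, and applying the triangle inequality (each summand has modulus one) gives $|(A(\Phi)^2)_{pc}|\leq |N(p)\cap N(c)|$. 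Thus
$$M_4(\Phi,p)\leq\sum_{c}|N(p)\cap N(c)|^{2}\leq d_p\sum_{c}|N(p)\cap N(c)|=d_p\sum_{b\in N(p)}d_b\leq d_p^{2}\Delta(G),$$
and the asserted inequality \eqref{ver-enr-deg-bou} follows.

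For the sufficiency of the equality condition, I would verify $\Phi=(K_{d_p,\Delta(G)},1)$ directly. With $v_p$ on the $\Delta(G)$-side, a routine computation gives the block form $A(\Phi)^2=\operatorname{diag}(\Delta(G)\,J_{d_p},\,d_p\,J_{\Delta(G)})$. Using $\sqrt{a\,J_n}=\sqrt{a/n}\,J_n$, one reads off $|A(\Phi)|_{pp}=\sqrt{d_p/\Delta(G)}$. Because switching $A(\Phi)\mapsto UA(\Phi)U^{*}$ by a unitary diagonal $U$ conjugates $|A(\Phi)|$ by $U$ and preserves its diagonal, the same value holds for every $\Phi\sim(K_{d_p,\Delta(G)},1)$.

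For the necessity, I would trace the equality cases through every step. Tightness of the bound on $M_4(\Phi,p)$ yields three conditions: (a) for each $c$ with $N(p)\cap N(c)\neq\emptyset$, all the unit complex numbers $\varphi(\overrightarrow{e_{p,b}})\varphi(\overrightarrow{e_{b,c}})$ agree, which forces every $4$-cycle through $p$ to have gain $1$; (b) whenever $N(p)\cap N(c)\neq\emptyset$, actually $N(p)\subseteq N(c)$; (c) $d_b=\Delta(G)$ for all $b\in N(p)$. Condition (b) makes $N(p)$ independent, and together (b) and (c) carve out a $K_{d_p,\Delta(G)}$ sitting inside $G$ around $p$. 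Finally, tightness of Lemma \ref{Lm6} (the H\"older step applied to the positive functional $\Omega_p$) forces the spectral measure of $|A(\Phi)|$ at $p$ to be concentrated on $\{0,\mu\}$ for a single $\mu>0$, which rules out any vertex of $G$ outside the two-neighborhood of $p$; combined with connectedness this gives $G=K_{d_p,\Delta(G)}$. Condition (a) then says the fundamental $4$-cycles through $p$ relative to a star-like spanning tree all have trivial gain, so Lemma \ref{lm-2.1} yields $\Phi\sim(K_{d_p,\Delta(G)},1)$. The main obstacle is this last stage: graph-theoretic conditions (a)--(c) alone can be satisfied by a $K_{d_p,\Delta(G)}$ with extra pendants, and excluding those peripheral vertices really requires exploiting the sharpness of the H\"older inequality in Lemma \ref{Lm6}, not just the combinatorial bound on $M_4(\Phi,p)$.
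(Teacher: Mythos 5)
Your proof of the inequality and of the sufficiency of the equality condition is correct and follows essentially the paper's route: both arguments rest on Lemma \ref{Lm6} together with the estimate $M_4(\Phi,p)\le d_p^2\Delta(G)$. The paper obtains that estimate by classifying closed $4$-walks from $v_p$ into three types and bounding the gain-sum of each type, whereas you obtain it by bounding $|(A(\Phi)^2)_{pc}|$ by $|N(p)\cap N(c)|$ and summing; these are the same count organized differently. Your direct computation of $|A(\Phi)|_{pp}$ for $(K_{d_p,\Delta(G)},1)$ and the switching-invariance remark (cf.\ Lemma \ref{Lm.4}) are also fine.

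The gap is in the necessity direction, at exactly the point you flag as ``the main obstacle.'' You are right that the combinatorial conditions (a)--(c) extracted from $M_4(\Phi,p)=d_p^2\Delta(G)$ only control the closed $2$-neighbourhood of $v_p$ (for instance, $P_4$ with $p$ an end-vertex satisfies $M_4=d_p^2\Delta$ without being $K_{1,2}$ --- the paper's own one-line deduction ``$M_4=d^2\Delta$, therefore $G=K_{d,\Delta}$'' silently skips this), so the equality case of the H\"older step must be invoked. But your proposal then \emph{asserts}, rather than proves, that concentration of the spectral measure of $|A(\Phi)|$ at $e_p$ on $\{0,\mu\}$ ``rules out any vertex of $G$ outside the two-neighborhood of $p$.'' That implication is not a free-standing fact and is where all the remaining work lives. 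What the two-atom measure actually gives is the identity $A(\Phi)^4e_p=\mu^2A(\Phi)^2e_p$ with $\mu^2=M_4/M_2=d_p\Delta(G)$, equivalently that every even moment is pinned: $M_{2k}(\Phi,p)=d_p^k\Delta(G)^{k-1}$ for all $k\ge1$. Turning this into ``no edges beyond the second neighbourhood'' requires comparing, say, $M_6(\Phi,p)$ with the sum of gains of closed $6$-walks, and here one must show that the extra walks created by any additional edge contribute a \emph{strictly positive} real amount; since gains are arbitrary unit complex numbers, closed $6$-walks that leave the union of $4$-cycles through $v_p$ do not automatically have gain $1$, so this positivity has to be argued (it reduces to $4$-cycle gains only for walks that traverse the extra edge back and forth). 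Until that step is supplied, the characterization of equality is incomplete.
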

\begin{proof}
Let $ V(G)=\{v_1, v_2, \dots, v_n \} $ be the vertex set of $ G $. Let the degree of the vertex $ v_j $ be $ d_j $. Set $ d_j=d $. Then the following three types of directed $ 4 $-walks, starting from the vertex $ v_j $ to itself, are possible:

\begin{enumerate}
 \item $ v_j \rightarrow v_i\rightarrow v_j \rightarrow v_k \rightarrow v_j $;
 \item  $ v_j \rightarrow v_i\rightarrow v_s \rightarrow v_i \rightarrow v_j $, where $v_j \ne v_s $;
 \item $ v_j \rightarrow v_i\rightarrow v_s \rightarrow v_k \rightarrow v_j $, where four vertices are mutually distinct.
\end{enumerate}

Now the maximum value of the sum of the gains of the walks of type 1 is $ d^{2} $. Similarly, for the type 2, the maximum value is $ d(\Delta(G)-1) $, and for the type 3, the maximum value is $ 2 \sum\limits_{t=1}^{p} \cos (\theta_t)$, where $ p \leq \frac{d(\Delta(G) -1)(d-1)}{2} $ and $ \varphi(\overrightarrow{C_m})=e^{i\theta_m} $, $ \overrightarrow{C_m} $ is a $ 4 $-cycle formed by this walk. Thus the maximum value is $ d(\Delta(G) -1)(d-1) $, and hence $ M_{4}(\Phi, j) \leq d^{2}\Delta(G)$.  Now,  by Lemma \ref{Lm6}, we have $ \mathcal{E}_{\Phi}(v_j) \geq \sqrt{\frac{d_j}{\Delta(G)}} $.
	  	
If equality occurs in (\ref{ver-enr-deg-bou}), then $ M_{4}(\Phi, j)= d^{2}\Delta(G)$.  Therefore, $ G=K_{d, \Delta(G)} $. Again from the equality $ M_{4}(\Phi, j)= d^{2}\Delta(G)$, we have $ \varphi(\overrightarrow{C_m}) =1$, for all cycle passing through the vertex $ v_j $. Thus, by the Lemma \ref{lm3}, $ \Phi $ is balanced. Hence $ \Phi \sim (K_{d, \Delta(G)}, 1) $. Converse is easy to verify.

\end{proof}

\begin{corollary} \label{Cor.1}
Let $ \Phi=( G, \varphi) $ be any connected $ \mathbb{T} $-gain graph on a $ r $-regular graph $ G $. Then,
\begin{equation*}
\mathcal{E}_{\Phi}(v_i) \geq 1,
\text{ for all $ v_i \in  V(G) $}.
\end{equation*}
Equality occurs if and only if $\Phi=(K_{r,r}, 1) $.
\end{corollary}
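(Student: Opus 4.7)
My approach is to read off Corollary \ref{Cor.1} as an immediate specialization of the just-proved Theorem \ref{th7}. The $r$-regular hypothesis forces $d_i = r$ for every vertex $v_i$ and $\Delta(G) = r$, so the bound $\mathcal{E}_{\Phi}(v_j) \geq \sqrt{d_j/\Delta(G)}$ supplied by Theorem \ref{th7} instantly reduces to $\mathcal{E}_{\Phi}(v_i) \geq \sqrt{r/r} = 1$, which is the desired inequality. Likewise, the equality clause of Theorem \ref{th7} characterizes equality by $\Phi \sim (K_{d_i, \Delta(G)}, 1)$, which in the $r$-regular setting becomes $\Phi \sim (K_{r,r}, 1)$.

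For the converse direction, I would first observe that vertex energy is a switching invariant: if $A(\Phi') = D A(\Phi) D^{*}$ with $D$ diagonal unitary, then $|A(\Phi')| = D|A(\Phi)|D^{*}$, and the $(i,i)$ entry is preserved since $|d_{ii}|^2 = 1$. Hence it suffices to compute $\mathcal{E}_{(K_{r,r},1)}(v_i)$. The spectrum of $K_{r,r}$ is $\{r, 0, \dots, 0, -r\}$ with $0$ of multiplicity $2r-2$, so $\mathcal{E}(K_{r,r}) = 2r$, and vertex-transitivity of $K_{r,r}$ forces this total energy to be distributed evenly as $2r/(2r) = 1$ at each vertex. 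Thus $\mathcal{E}_{\Phi}(v_i) = 1$ whenever $\Phi \sim (K_{r,r}, 1)$.

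There is essentially no obstacle here; the entire corollary is one line of substitution into Theorem \ref{th7} plus a short switching-invariance/vertex-transitivity computation for the converse. If anything, the only minor point to record carefully is that the connectedness hypothesis ensures the extremal configuration is a single copy of $(K_{r,r}, 1)$ rather than a disjoint union of such copies.
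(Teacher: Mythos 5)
Your proposal is correct and matches the paper's own proof, which likewise just sets $d_i=r=\Delta(G)$ and invokes Theorem \ref{th7} together with its equality characterization. The extra verification you give for the converse (switching invariance plus the vertex-transitivity computation on $K_{r,r}$) is a detail the paper leaves to the reader, but it is the same route.
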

	  	
\begin{proof}
Since $ G $ is a connected $ r $-regular graph with vertex set $ V(G)=\{ v_1, v_2, \dots, v_n \}$, so the degree of each vertex is same. For $ i=1, 2, \cdots, n $, $ d_i=r=\Delta(G) $. Then by the Theorem \ref{th7}, we  have $ \mathcal{E}_{\Phi}(v_i) \geq 1,
  $ for all $ v_i \in  V(G)$. Equality occur if and only if $ \Phi \sim (K_{r,r}, 1) $
\end{proof}
	  	In the next lemma, we show that the energy of a vertex is invariant under the switching equivalence of $\mathbb{T}$-gain graphs.
\begin{lemma}\label{Lm.4}
Let $ \Phi_{1} $ and $ \Phi_2 $ be any two switching equivalent  $ \mathbb{T} $-gain graphs on a  graph $ G $ with the vertex set $V(G)= \{v_1, v_2, \dots, v_n  \} $. Then  for each $ i $,
\begin{equation*}
\mathcal{E}_{\Phi_{1}}(v_i)=\mathcal{E}_{\Phi_{2}}(v_i).
\end{equation*}
\end{lemma}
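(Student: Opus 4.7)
The strategy is to unwind the definition $\mathcal{E}_{\Phi}(v_i)=|A(\Phi)|_{ii}$ and exploit the fact that switching equivalence is implemented by conjugation by a unitary \emph{diagonal} matrix, which leaves the diagonal of an absolute-value matrix unchanged.

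First, I would invoke the definition of switching equivalence: since $\Phi_1\sim\Phi_2$, there exists a unitary diagonal matrix $U=\diag(u_1,\dots,u_n)$ (with $|u_i|=1$) such that
\begin{equation*}
A(\Phi_2)=U\,A(\Phi_1)\,U^{*}.
\end{equation*}
Taking the conjugate transpose gives $A(\Phi_2)^{*}=U\,A(\Phi_1)^{*}U^{*}$, so
\begin{equation*}
A(\Phi_2)A(\Phi_2)^{*}=U\,A(\Phi_1)A(\Phi_1)^{*}\,U^{*}.
\end{equation*}

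Next I would take principal square roots. Since $A(\Phi_1)A(\Phi_1)^{*}$ is positive semidefinite and $U$ is unitary, the matrix $U\bigl(A(\Phi_1)A(\Phi_1)^{*}\bigr)^{1/2}U^{*}$ is positive semidefinite, and its square equals $U\,A(\Phi_1)A(\Phi_1)^{*}\,U^{*}=A(\Phi_2)A(\Phi_2)^{*}$. By uniqueness of the PSD square root,
\begin{equation*}
|A(\Phi_2)|=U\,|A(\Phi_1)|\,U^{*}.
\end{equation*}

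Finally, I would extract the $(i,i)$-entry. Because $U$ is diagonal, the only surviving term on the right-hand side is
\begin{equation*}
\bigl(U\,|A(\Phi_1)|\,U^{*}\bigr)_{ii}=u_i\,|A(\Phi_1)|_{ii}\,\overline{u_i}=|u_i|^{2}\,|A(\Phi_1)|_{ii}=|A(\Phi_1)|_{ii},
\end{equation*}
since $|u_i|=1$. Hence $\mathcal{E}_{\Phi_2}(v_i)=|A(\Phi_2)|_{ii}=|A(\Phi_1)|_{ii}=\mathcal{E}_{\Phi_1}(v_i)$ for every $i$. The only nontrivial step is the passage to square roots, which is handled cleanly by the uniqueness of the PSD square root; everything else is a direct computation.
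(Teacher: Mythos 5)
Your proof is correct. It differs from the paper's in mechanism, though both ultimately rest on the same observation that conjugation by a \emph{diagonal} unitary leaves diagonal entries of moduli unchanged. The paper routes the argument through its Lemma on the eigenvector expansion of vertex energy (the formula $\mathcal{E}_{\Phi}(v_i)=\sum_j |q_{ij}|^2|\lambda_j|$): writing $A(\Phi_2)=QDQ^{*}$ and $A(\Phi_1)=UA(\Phi_2)U^{*}=(UQ)D(UQ)^{*}$, the eigenvector matrices differ by the diagonal unitary $U$, so the weights $|q_{ij}|^2$ are unchanged. You instead work directly with the definition $\mathcal{E}_{\Phi}(v_i)=|A(\Phi)|_{ii}$, showing $|A(\Phi_2)|=U|A(\Phi_1)|U^{*}$ via uniqueness of the positive semidefinite square root and then reading off the $(i,i)$-entry. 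Your route is more self-contained (it does not need the spectral-decomposition lemma) and arguably makes the invariance more transparent; the paper's version is terser because the eigenvector formula has already been established and is reused elsewhere. Both are complete and correct.
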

\begin{proof}
Since $ \Phi_{1} \sim  \Phi_{2}$, so  $ \spec(\Phi_{1})=\spec(\Phi_{2}) $ and there is a diagonal unitary matrix $ U $ such that $ A(\Phi_{1})=UA(\Phi_{2})U^{*} $. Hence, by the Lemma \ref{lemma.1}, we have $ \mathcal{E}_{\Phi_{1}}(v_i)=\mathcal{E}_{\Phi_{2}}(v_i) $ for each $ i $.
\end{proof}
	
	  	In the next lemma, we provide a sufficient condition for the vertex energy of a  $\mathbb{T}$-gain graph equals to the vertex energy of its underlying graph.
\begin{lemma} \label{cor1}
Let $ \Phi=(G,\varphi) $ be any $ \mathbb{T} $-gain graph such that either $ \Phi $ is balanced or $ -\Phi $ is balanced. Then $ \mathcal{E}_{\Phi}(v_i) =\mathcal{E}_{-\Phi}(v_i)=\mathcal{E}_{G}(v_i)$.
\end{lemma}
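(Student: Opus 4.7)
The plan is to reduce the statement to two essentially elementary observations, combining a sign-invariance property of the absolute value with Lemma \ref{Lm.4}.

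First I would establish the purely algebraic equality $\mathcal{E}_\Phi(v_i) = \mathcal{E}_{-\Phi}(v_i)$, independently of any balance hypothesis. Since $A(-\Phi) = -A(\Phi)$, we have
\[
A(-\Phi)A(-\Phi)^{*} = (-A(\Phi))(-A(\Phi))^{*} = A(\Phi)A(\Phi)^{*},
\]
so $|A(-\Phi)| = |A(\Phi)|$ as positive semidefinite matrices. In particular their diagonals coincide, which by Definition \ref{def1} gives $\mathcal{E}_{-\Phi}(v_i)=\mathcal{E}_\Phi(v_i)$ for every vertex $v_i$.

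Next I would use the balance hypothesis to compare with $\mathcal{E}_G(v_i)$. If $\Phi$ is balanced, then by the characterization recalled before Theorem \ref{th-2.1} we have $\Phi \sim (G,1)$. Invoking Lemma \ref{Lm.4} (switching invariance of vertex energy) yields $\mathcal{E}_{\Phi}(v_i) = \mathcal{E}_{(G,1)}(v_i) = \mathcal{E}_G(v_i)$, and the first step then delivers the remaining equality $\mathcal{E}_{-\Phi}(v_i) = \mathcal{E}_G(v_i)$. If instead $-\Phi$ is balanced, apply the same argument to $-\Phi$ to get $\mathcal{E}_{-\Phi}(v_i) = \mathcal{E}_G(v_i)$, and combine with the first step to obtain $\mathcal{E}_\Phi(v_i) = \mathcal{E}_G(v_i)$.

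I do not anticipate a genuine obstacle: once the sign-invariance $|A(-\Phi)|=|A(\Phi)|$ is noted, everything else follows from already-recorded facts (the definition of balanced, its consequence $\Phi \sim (G,1)$, and Lemma \ref{Lm.4}). The only point worth stating carefully is that vertex energy depends only on $A(\Phi)A(\Phi)^{*}$, which makes the sign flip invisible at the level of $|A(\Phi)|$.
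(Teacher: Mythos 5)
Your proof is correct and follows essentially the same two-step structure as the paper's: establish that vertex energy is invariant under negation, then use the balance hypothesis together with Lemma \ref{Lm.4} (switching invariance). The only cosmetic difference is that you derive the sign-invariance directly from $A(-\Phi)A(-\Phi)^{*}=A(\Phi)A(\Phi)^{*}$ and Definition \ref{def1}, whereas the paper routes it through the eigendecomposition and the formula of Lemma \ref{lemma.1}; both are valid and yours is, if anything, slightly more direct.
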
	
\begin{proof}
If $ \Phi $ is balanced, then $ \Phi \sim G $. Thus by the Lemma \ref{Lm.4}, $ \mathcal{E}_{\Phi}(v_i)=\mathcal{E}_{G}(v_i) $, for $ i=1,2, \dots, n $. Let $ \{\lambda_1, \lambda_2, \cdots, \lambda_n\} $ be the spectrum of $ \Phi $. Let $ D=diag(\lambda_1, \lambda_2, \cdots, \lambda_n) $. Then there exist an unitary matrix $ Q $ such that $ A(\Phi)=QDQ^{*} $. Thus $ A(-\Phi)=-A(\Phi)=Q(-D)Q^{*} $. Therefore, by the Lemma \ref{lemma.1}, $ \mathcal{E}_{\Phi}(v_i)=\mathcal{E}_{-\Phi}(v_i) $, for $ i=1, 2, \dots, n$. Thus $ \mathcal{E}_{\Phi}(v_i) =\mathcal{E}_{-\Phi}(v_i)=\mathcal{E}_{G}(v_i) $. If $ -\Phi $ is balanced then we can prove the statement similarly.
\end{proof}

	In the next Theorem, we provide a lower bound for the vertex energy  of a $\mathbb{T}$-gain graph in terms of the degree of the vertex  and the maximum vertex degree of the underlying graph. 	  	 
\begin{theorem}\label{Th.5}
Let $ \Phi=(G, \varphi) $ be any connected  $ \mathbb{T} $-gain graph with  at least one edge. Then
\begin{equation*}
\mathcal{E}_{\Phi}(v_i) \geq \frac{d_i}{\Delta(G)}, ~~~~\text{ for all  $ v_i \in V(G).$}
\end{equation*}
Equality occurs if and only if  $ \Phi \sim (K_{d,d},1) $, for some $d$.
\end{theorem}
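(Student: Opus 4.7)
The plan is to derive Theorem \ref{Th.5} as an essentially immediate consequence of Theorem \ref{th7}, which was just established. Since $G$ is connected with at least one edge, every vertex degree satisfies $0 < d_i \leq \Delta(G)$, hence $d_i/\Delta(G) \in (0,1]$ and consequently $\sqrt{d_i/\Delta(G)} \geq d_i/\Delta(G)$. Chaining this elementary observation with Theorem \ref{th7} produces the bound
$$\mathcal{E}_{\Phi}(v_i) \;\geq\; \sqrt{d_i/\Delta(G)} \;\geq\; d_i/\Delta(G).$$

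For the equality characterization, I would argue as follows. Assume $\mathcal{E}_{\Phi}(v_i) = d_i/\Delta(G)$. Then every inequality in the chain
$$d_i/\Delta(G) \;=\; \mathcal{E}_{\Phi}(v_i) \;\geq\; \sqrt{d_i/\Delta(G)} \;\geq\; d_i/\Delta(G)$$
must be an equality. In particular, $\sqrt{d_i/\Delta(G)} = d_i/\Delta(G)$ combined with $d_i>0$ forces $d_i = \Delta(G)$; and the identity $\mathcal{E}_{\Phi}(v_i) = \sqrt{d_i/\Delta(G)}$ together with the equality case of Theorem \ref{th7} yields $\Phi \sim (K_{d_i, \Delta(G)}, 1) = (K_{d,d}, 1)$ with $d = \Delta(G)$.

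The reverse direction is straightforward: if $\Phi \sim (K_{d,d}, 1)$, then the underlying graph is $d$-regular so $d_i/\Delta(G) = 1$, and by Corollary \ref{Cor.1} we have $\mathcal{E}_{\Phi}(v_i) = 1$, confirming the equality. I do not anticipate a substantive obstacle: the argument simply splices together Theorem \ref{th7} and Corollary \ref{Cor.1} with the elementary inequality $\sqrt{x} \geq x$ on $(0,1]$. The only mildly delicate point is ensuring $d_i > 0$ before dividing by $\sqrt{d_i/\Delta(G)}$, which is guaranteed by the connectedness hypothesis and the presence of at least one edge.
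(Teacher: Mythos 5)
Your proposal is correct, but it follows a genuinely different route from the paper. The paper proves Theorem \ref{Th.5} directly from the spectral decomposition: by Lemma \ref{lemma.1}, $\mathcal{E}_{\Phi}(v_i)=\sum_j |q_{ij}|^2|\lambda_j|$, and since Theorem \ref{th-2.1} places every eigenvalue in $[-\Delta(G),\Delta(G)]$, one has $|\lambda_j|\geq \lambda_j^2/\Delta(G)$ termwise, whence $\mathcal{E}_{\Phi}(v_i)\geq \sum_j |q_{ij}|^2\lambda_j^2/\Delta(G)=d_i/\Delta(G)$. For equality the paper argues that $\pm\Delta(G)$ must be an eigenvalue, so $\rho(\Phi)=\rho(G)=\Delta(G)$, hence $\Phi$ or $-\Phi$ is balanced by Theorem \ref{th-2.1}; it then reduces to the undirected result (Lemma \ref{th3}) to get $G\cong K_{d,d}$, using Theorem \ref{th-2.2} to dispose of the $-\Phi$ case. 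You instead derive everything from Theorem \ref{th7} via the elementary inequality $\sqrt{x}\geq x$ on $(0,1]$, with the sandwich argument forcing $d_i=\Delta(G)$ and the equality case of Theorem \ref{th7} forcing $\Phi\sim(K_{d_i,\Delta(G)},1)$. Both arguments are sound; yours is shorter and makes the logical relationship between the two bounds transparent, but it inherits its entire strength from Theorem \ref{th7}, whose proof rests on the more delicate $4$-walk counting and Lemma \ref{lm3}, whereas the paper's proof is independent of that combinatorial machinery and instead leans on the spectral-radius characterization of balance and the known undirected-graph result. One small point in your favor: your argument cleanly shows that equality at even a single vertex already forces $\Phi\sim(K_{d,d},1)$, a reading the paper's phrasing leaves slightly ambiguous.
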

\begin{proof}
Let $ \Phi=(G, \varphi) $ be any $ \mathbb{T} $-gain graph on $ G $. Let $ \lambda_{n}\leq \lambda_{n-1}\leq \dots \leq \lambda_{1} $ be the eigenvalues of $ \Phi $. By Theorem \ref{th-2.1}, $ \max\{\lambda_1,-\lambda_n\} =\rho(\Phi)\leq \rho(G)\leq \Delta(G).$ Hence $ \lambda_{i}\in [-\Delta(G), \Delta(G)] $ for all $ i $. Therefore, $ \big| \frac{\lambda_{i}}{\Delta(G)}\big| \leq 1 $. Then $\big| \frac{\lambda_{i}}{\Delta(G)}\big| \geq \big(\frac{\lambda_{i}}{\Delta(G)} \big)^{2}  $ and equality occur if and only if $ \lambda_{i} \in \{ -\Delta(G), 0, \Delta(G)\} $.
Using Lemma \ref{lemma.1}, we have
\begin{equation*}
\mathcal{E}_{\Phi}(v_i) =\sum\limits_{j=1}^{n}|q_{ij}|^{2}|\lambda_{j}| \geq \sum\limits_{j=1}^{n}|q_{ij}|^{2}\frac{\lambda_{j}^{2}}{\Delta(G)}=\frac{d_i}{\Delta(G)},  \text{ for all $ v_i\in V(G) $},
\end{equation*}

where $ Q=(q_{ij}) $ is the unitary matrix whose columns are eigenvectors of $ \Phi $.  Since $ G $ has at least one edge so there is a vertex $ v_j $ such that $ \mathcal{E}_{\Phi}(v_j)>0 $. Therefore, if equality occur then either $ \Delta(G)  $  or $ -\Delta(G) $ must be an eigenvalue of $ \Phi $.

Now $ \Delta(G)=\rho(\Phi)\leq \rho(G)\leq \Delta(G) $, so $ \rho(G)=\rho(\Phi) $. Thus, by Theorem \ref{th-2.1}, either  $ \Phi $ is balanced or $ -\Phi $ is balanced. \\
\noindent \textbf{Case-I:} If $ \Phi $ is balanced, then by Lemma \ref{Lm.4},  $ \mathcal{E}_{\Phi}(v_i)= \mathcal{E}_{G}(v_i)$ for all $ v_i \in V(G) $.
Then $ \mathcal{E}_{G}(v_i)=\frac{d_i}{\Delta(G)} $. Therefore, by Lemma \ref{th3}, $ G $ is isomorphic to $ K_{d,d} $, for some $ d $. Hence $ \Phi \sim (K_{d,d}, 1) $.
	  	
\noindent\textbf{Case-II:} If $ -\Phi $ is balanced,
then, similar to case-I, $ -\Phi \sim (K_{d,d},1) $. Since the underlying graph is bipartite and $ -\Phi $ is balanced, so, by Theorem \ref{th-2.2}, $ \Phi $ is balanced. Thus $ \Phi \sim (K_{d,d}, 1) $.
\end{proof}
Using the above theorem, we prove that the energy of the complete bipartite $\mathbb{T}$-gain graph  $K_{n,n}$ is always greater than or equal to the energy of the underlying graph.
	\begin{theorem} \label{th5}
	  	If $ \Phi=(K_{n,n}, \varphi) $ is any $ \mathbb{T} $-gain graph on the complete bipartite graph $ K_{n,n} $, then  $ \mathcal{E}(\Phi) \geq \mathcal{E}(K_{n,n})=2n$, and equality holds if and only if $ \Phi \sim (K_{n,n},1) $.
	 	\end{theorem}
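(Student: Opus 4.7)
The plan is to deduce this as a direct consequence of Theorem \ref{Th.5}, which gives the sharp vertex-energy lower bound $\mathcal{E}_{\Phi}(v_i) \geq d_i/\Delta(G)$, together with the additive decomposition of energy in \eqref{eq2}.

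First I would record that $K_{n,n}$ is $n$-regular, so $d_i = \Delta(K_{n,n}) = n$ for every vertex $v_i$. Applying Theorem \ref{Th.5} to the $\mathbb{T}$-gain graph $\Phi = (K_{n,n}, \varphi)$, I obtain
\begin{equation*}
\mathcal{E}_{\Phi}(v_i) \;\geq\; \frac{d_i}{\Delta(K_{n,n})} \;=\; 1, \qquad i = 1, 2, \dots, 2n.
\end{equation*}
Next I would note that the spectrum of $K_{n,n}$ consists of $\pm n$ (each simple) and $0$ (of multiplicity $2n-2$), so $\mathcal{E}(K_{n,n}) = 2n$. Summing the vertex-energy bound over all $2n$ vertices and using \eqref{eq2} gives
\begin{equation*}
\mathcal{E}(\Phi) \;=\; \sum_{i=1}^{2n} \mathcal{E}_{\Phi}(v_i) \;\geq\; 2n \;=\; \mathcal{E}(K_{n,n}),
\end{equation*}
which establishes the inequality.

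For the characterization of equality, the key observation is that $\mathcal{E}(\Phi) = 2n$ forces $\mathcal{E}_{\Phi}(v_i) = 1$ for every $i$, since each summand is at least $1$. By the equality clause of Theorem \ref{Th.5}, this forces $\Phi \sim (K_{d,d}, 1)$ for some $d$; comparing underlying graphs yields $d = n$, hence $\Phi \sim (K_{n,n}, 1)$. Conversely, if $\Phi \sim (K_{n,n}, 1)$, then $A(\Phi)$ and $A(K_{n,n})$ are unitarily similar, so they share the same spectrum and $\mathcal{E}(\Phi) = 2n$. There is no real obstacle here: the entire argument is a packaging of Theorem \ref{Th.5} for the regular case, and the only thing to verify carefully is the equality clause, which is already handled by that theorem.
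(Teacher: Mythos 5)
Your proposal is correct and follows essentially the same route as the paper: the paper also sums the vertex-energy bound of Theorem \ref{Th.5} over all $2n$ vertices via \eqref{eq2} to get $\mathcal{E}(\Phi)\geq\sum_j d_j/\Delta(G)=2n$, and then settles equality through the balanced case. Your write-up of the equality clause (forcing $\mathcal{E}_{\Phi}(v_i)=1$ for every $i$ and invoking the characterization in Theorem \ref{Th.5}) is in fact more explicit than the paper's one-line remark, but it is the same argument.
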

	 	\begin{proof}
	  	Let  $V(G)= \{v_1, v_2, \dots, v_{2n}  \} $ be the set of vertices of $ \Phi $. Then,
	  	$\mathcal{E}(\Phi)=\mathcal{E}_{\Phi}(v_1)+\mathcal{E}_{\Phi}(v_2)+\dots+\mathcal{E}_{\Phi}(v_{2n}). $ By  Theorem \ref{Th.5}, we have
	  	\begin{align*}
	  	\mathcal{E}(\Phi)&=\sum\limits_{j=1}^{2n}\mathcal{E}_{\Phi}(v_j)\geq\sum\limits_{j=1}^{2n}\frac{d_j}{\Delta(G)}=2n=\mathcal{E}(K_{n,n}).\\
	  	\end{align*}
	  	It is easy to see that, equality occur if and only if $ \Phi $ is balanced.
	    \end{proof}

If $ G $ is a $ r $-regular graph of $ n $ vertices, then $ \mathcal{E}(G) \geq n $ and equality occur if and only if each component of $ G $ is $ K_{r,r} $ [Theorem \ref{lm.19}]. Next corollary is an extension of the above result for the $ \mathbb{T} $-gain graph.

\begin{corollary}
Let $ \Phi=(G, \varphi) $ be any $ r $-regular $ \mathbb{T} $-gain graph on $ n $ vertices, where $ r>0 $. Then $ \mathcal{E}(\Phi) \geq n $ and equality occur if and only if each component of $ \Phi $ is switching equivalent to $ (K_{r,r}, 1) $.
\end{corollary}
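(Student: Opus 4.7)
The plan is to reduce the statement to the already established per-vertex bound in Corollary \ref{Cor.1}, by decomposing $\Phi$ into its connected components and then using the additive decomposition of energy in \eqref{eq2}.

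First, write $\Phi = \Phi_1 \oplus \Phi_2 \oplus \cdots \oplus \Phi_k$, where each $\Phi_j = (G_j, \varphi)$ is a connected component of $\Phi$. Since $G$ is $r$-regular with $r > 0$, each component $G_j$ is itself a connected $r$-regular graph (hence has at least one edge), and $A(\Phi)$ is block diagonal with blocks $A(\Phi_j)$. In particular, $\spec(\Phi) = \bigsqcup_{j=1}^k \spec(\Phi_j)$, so $\mathcal{E}(\Phi) = \sum_{j=1}^k \mathcal{E}(\Phi_j)$, and if $n_j = |V(G_j)|$, then $\sum_{j=1}^k n_j = n$.

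Next, fix a component $\Phi_j$. By Corollary \ref{Cor.1} applied to $\Phi_j$, we have $\mathcal{E}_{\Phi_j}(v) \geq 1$ for every $v \in V(G_j)$, with equality (for the full vertex set) characterizing $\Phi_j \sim (K_{r,r},1)$. Summing this over the $n_j$ vertices of $G_j$ and using \eqref{eq2}, we get
\begin{equation*}
\mathcal{E}(\Phi_j) \;=\; \sum_{v \in V(G_j)} \mathcal{E}_{\Phi_j}(v) \;\geq\; n_j.
\end{equation*}
Adding these inequalities across $j = 1, \dots, k$ yields $\mathcal{E}(\Phi) \geq \sum_j n_j = n$, which is the desired lower bound.

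For the equality characterization, suppose $\mathcal{E}(\Phi) = n$. Because $\mathcal{E}(\Phi_j) \geq n_j$ for each $j$ and the sum is tight, each component must satisfy $\mathcal{E}(\Phi_j) = n_j$. But this forces $\mathcal{E}_{\Phi_j}(v) = 1$ for every vertex $v$ of $\Phi_j$ (since each summand in \eqref{eq2} is at least $1$). By the equality case in Corollary \ref{Cor.1}, this means $\Phi_j \sim (K_{r,r}, 1)$ for every $j$. Conversely, if each component is switching equivalent to $(K_{r,r},1)$, then by Lemma \ref{Lm.4} the energies match $\mathcal{E}(\Phi_j) = \mathcal{E}(K_{r,r}) = 2r = n_j$, giving $\mathcal{E}(\Phi) = n$. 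The only mild subtlety, which I would mention briefly, is the observation that the components of an $r$-regular graph with $r > 0$ are themselves connected and $r$-regular, so Corollary \ref{Cor.1} applies componentwise; beyond that, the argument is essentially a bookkeeping exercise.
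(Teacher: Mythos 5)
Your proposal is correct and follows essentially the same route as the paper: decompose $\Phi$ into connected components, apply Corollary \ref{Cor.1} vertexwise to each (necessarily connected $r$-regular) component, sum via \eqref{eq2}, and read off the equality case from the equality condition of Corollary \ref{Cor.1}. The only difference is that you spell out the converse direction and the componentwise bookkeeping slightly more explicitly than the paper does.
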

\begin{proof}
Let $ G_1, G_2, \dots , G_k $ be the connected components of $ G $. Then $ \mathcal{E}(\Phi)=\sum\limits_{j=1}^{k}\mathcal{E}((G_j, \varphi)) $. By the definition \ref{def1} and the equation (\ref{eq2}),   $ \mathcal{E}((G_j, \varphi))=\sum\limits_{v\in V(G_j)}\mathcal{E}_{(G_j, \varphi)}(v) $, for each $ j=1, 2, \dots , k $.
 Now, by Corollary \ref{Cor.1}, $\mathcal{E}_{(G_j, \varphi)}(v)\geq 1  $, for any $ v\in V(G_j) $. Then $ \mathcal{E}(G_j, \varphi) \geq |V(G_j)| $, for all $ j=1, 2, \dots, k $. Thus $ \mathcal{E}(\Phi)\geq n $.
 If $ \mathcal{E}(\Phi)=n $, then $ \mathcal{E}_{\Phi}(u)=1$, for all $ u\in V(G) $. Therefore, by Corollary \ref{Cor.1}, each component of $ \Phi $ is switching equivalent to $ (K_{r,r},1) $.
\end{proof}

Let  $ G $ be a semiregular bipartite graph with parameter $ ( n_a,n_b,r_a,r_b) $. A  \emph{semi regular bipartite $ \mathbb{T} $-gain graph} with parameter $ ( n_a,n_b,r_a,r_b) $ is a $ \mathbb{T} $-gain graph whose underlying graph is a semiregular bipartite graph of parameter $ ( n_a,n_b,r_a,r_b) $. The next bound is the generalization of a Theorem \ref{lm.20} for the $ \mathbb{T} $-gain graphs.
	
\begin{corollary}
Let $ \Phi=(G, \varphi) $ be any semiregular bipartite  $ \mathbb{T} $-gain graph with parameter $ (n_a, n_b, r_a, r_b) $. Then $ \mathcal{E}(\Phi) \geq n_a \sqrt{\frac{r_a}{r_b}}+n_b\sqrt{\frac{r_b}{r_a}} $. Equality  occur if and only if each component is switching equivalent to $ (K_{r_a, r_b}, 1) $.
\end{corollary}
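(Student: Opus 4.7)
My plan is to prove the corollary via a vertex-specific lower bound that captures the asymmetry between the two sides of the bipartition. The per-vertex bounds from Theorem~\ref{Th.5} and Theorem~\ref{th7} only involve the global maximum degree $\Delta(G)=\max\{r_a,r_b\}$ and are insufficient here; instead, I will apply Lemma~\ref{Lm6} and estimate $M_4(\Phi,j)$ using the bipartite semiregular structure.

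For each vertex $v_j$ of degree $d_j$, Lemma~\ref{Lm6} yields $\mathcal{E}_{\Phi}(v_j) \geq d_j^{3/2}/\sqrt{M_4(\Phi,j)}$. Since $M_4(\Phi,j) = (A(\Phi)^4)_{jj} = \sum_k |(A(\Phi)^2)_{jk}|^2$ and the triangle inequality gives $|(A(\Phi)^2)_{jk}| \leq (A(G)^2)_{jk}$, I obtain $M_4(\Phi,j) \leq (A(G)^4)_{jj}$. Let $B$ denote the biadjacency matrix of the bipartition, so that $A(G)^2 = \diag(BB^T, B^TB)$. For $v_j$ on the side of degree $r_a$, $(BB^T)_{jj} = r_a$, the $j$-th row sum of $BB^T$ equals $r_a r_b$ (from $B\mathbf{1} = r_a\mathbf{1}$ and $B^T\mathbf{1} = r_b\mathbf{1}$), and each entry of $BB^T$ is at most $r_a$. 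Hence
\[
(A(G)^4)_{jj} = \sum_k (BB^T)_{jk}^{2} \leq r_a \sum_k (BB^T)_{jk} = r_a^{2} r_b,
\]
so $\mathcal{E}_\Phi(v_j) \geq r_a^{3/2}/\sqrt{r_a^{2} r_b} = \sqrt{r_a/r_b}$. The symmetric argument on the opposite side gives $\mathcal{E}_\Phi(v_j) \geq \sqrt{r_b/r_a}$, and summing over all vertices produces the required inequality.

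For equality, both estimates must be sharp at every vertex. The step $(BB^T)_{jk}^{2} \leq r_a (BB^T)_{jk}$ is tight only when each $(BB^T)_{jk} \in \{0, r_a\}$, so any two vertices on the same side of the bipartition either share no common neighbour or share all their neighbours; together with semiregularity this forces every connected component to be a complete bipartite graph with partition sizes $r_b$ and $r_a$, i.e.\ $K_{r_a,r_b}$. The step $|(A(\Phi)^2)_{jk}| = (A(G)^2)_{jk}$ means every pair of length-two walks between $v_j$ and $v_k$ carries the same gain, which is equivalent to every $4$-cycle being balanced; since $4$-cycles generate the cycle space of a complete bipartite graph, $\Phi$ is balanced on each component, and by Lemma~\ref{lm-2.1} each component is switching equivalent to $(K_{r_a,r_b}, 1)$. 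For the converse, a direct calculation using $A(\Phi)^2 = \diag(r_a J_{r_b}, r_b J_{r_a})$ together with the identity $\sqrt{cJ_n} = \sqrt{c/n}\,J_n$ gives $|A(\Phi)| = \diag(\sqrt{r_a/r_b}\,J_{r_b}, \sqrt{r_b/r_a}\,J_{r_a})$, whose diagonal entries sum to exactly $n_a \sqrt{r_a/r_b} + n_b \sqrt{r_b/r_a}$.

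The main obstacle will be the equality analysis: one must combine two logically independent equality conditions---a combinatorial neighbourhood dichotomy on each side and the balancedness of every $4$-cycle---into a clean description of the components, while carefully tracking that the component $K_{r_a,r_b}$ in the statement is meant to have partition sizes $r_b$ and $r_a$ so that the induced degrees reproduce the semiregular parameters $r_a$ and $r_b$.
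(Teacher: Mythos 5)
Your proof is correct, and it is worth comparing carefully with the paper's, because the paper's own argument for this corollary is a one-line reduction --- ``apply Theorem \ref{th7} to each component'' --- that does not literally work: Theorem \ref{th7} only gives $\mathcal{E}_{\Phi}(v_j)\geq\sqrt{d_j/\Delta(G)}$ with the \emph{global} maximum degree, so for the vertices on the side of degree $\max\{r_a,r_b\}$ it yields only $1$ rather than the needed $\sqrt{r_b/r_a}$ (assuming $r_b\geq r_a$), and summing gives a strictly weaker bound unless $r_a=r_b$. You identified exactly this deficiency and repaired it by going back to the engine behind Theorem \ref{th7}, namely Lemma \ref{Lm6}, and sharpening the fourth-moment estimate to $M_4(\Phi,j)\leq r_a^2 r_b$ using the biadjacency matrix and the fact that in a semiregular bipartite graph every neighbour of a degree-$r_a$ vertex has degree $r_b$; this is the refinement the paper implicitly relies on but never writes down (it is the gain-graph analogue of the argument behind Theorem \ref{lm.20}). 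Your equality analysis is also sound: tightness of $(BB^T)_{jk}^2\leq r_a(BB^T)_{jk}$ forces any two same-side vertices in a component sharing a common neighbour to have identical neighbourhoods, which by connectivity makes each component complete bipartite with parts of sizes $r_b$ and $r_a$; tightness of the triangle inequality in $|(A(\Phi)^2)_{jk}|\leq (A(G)^2)_{jk}$ makes every $4$-cycle neutral, whence Lemma \ref{lm3} gives balancedness and each component is switching equivalent to $(K_{r_a,r_b},1)$; and the converse computation of $|A(\Phi)|$ via $\sqrt{cJ_n}=\sqrt{c/n}\,J_n$ is a clean way to verify the equality case. In short, your route is the same vertex-energy strategy the authors intend, but your version actually closes the gap in their stated proof.
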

\begin{proof}
Let $ G_1, G_2, \dots, G_k $ be the connected components of $ G $. Then each $ G_j $ is connected semiregular bipartite graph. Now, by applying Theorem \ref{th7} to each component $ (G_j, \varphi) $, we get the result.
\end{proof}

\begin{remark}
Hermitian adjacency matrices of  mixed graphs are particular case of adjacency matrices of $ \mathbb{T} $-gain graphs. Therefore all the above results for energy of a vertex holds true for  mixed graphs.
\end{remark}

	  	
\section{ Lower bounds of energy of $ \mathbb{T} $-gain graphs}\label{lower-bound}

	In this section, we establish several lower bounds for the energy of $\mathbb{T}$-gain graphs. We begin this section with the following theorem which gives a lower bound for the energy of a  $\mathbb{T}$-gain graph in terms of the gain of the real parts of the fundamental cycles.

\begin{theorem}\label{th-4.1}
Let $ \Phi=(G,\varphi)$ be any connected $ \mathbb{T} $-gain graph on $ n $ vertices. Let $T$ be a normal spanning tree of $G$, and $ \{ C_1, C_2, \dots, C_l\} $ be the collection of all fundamental cycles in $ G $ with respect to  $T$. Then,
\begin{equation}\label{ene-bou-fun-cyc}
\mathcal{E}(\Phi) \geq 2\sum\limits_{j = 1}^{l} \Re \big(\varphi (C_j)) \big) +(5n-n^2-4).
\end{equation}
The inequality is sharp.
\end{theorem}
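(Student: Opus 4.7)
The plan is to bound $\mathcal{E}(\Phi)$ below by $2\lambda_{\max}(\Phi)$ using the zero trace of $A(\Phi)$, then lower-bound $\lambda_{\max}(\Phi)$ by the Rayleigh quotient of the all-ones vector after switching every tree edge to gain $1$, and finally weaken the resulting estimate to \eqref{ene-bou-fun-cyc}.

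First, since $A(\Phi)$ is Hermitian with zero diagonal, $\operatorname{tr}(A(\Phi))=0$, so the positive eigenvalues of $A(\Phi)$ sum to the same quantity $S$ as the absolute values of the negative eigenvalues. Hence $\mathcal{E}(\Phi)=2S\geq 2\lambda_{\max}(\Phi)$. Next I will use the switching invariance of both $\mathcal{E}(\Phi)$ and the fundamental cycle gains $\varphi(C_j)$ (Lemma~\ref{lm-2.1}) to replace $\Phi$ by a switching-equivalent gain graph in which every tree edge $e\in E(T)$ carries gain $1$. A direct computation then shows that, under this normalization, the non-tree edge associated to the fundamental cycle $C_j$ carries gain $\varphi(C_j)$ in one of the two orientations. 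Applying the Rayleigh principle with $x=\mathbf{1}$ gives
\[
\lambda_{\max}(\Phi)\geq \frac{\mathbf{1}^{*}A(\Phi)\mathbf{1}}{n}=\frac{2}{n}\sum_{e\in E(G)}\Re(\varphi(\overrightarrow{e}))=\frac{2}{n}\left[(n-1)+\sum_{j=1}^{l}\Re(\varphi(C_j))\right],
\]
so that combining with the previous step yields the stronger bound $\mathcal{E}(\Phi)\geq \tfrac{4}{n}\bigl[(n-1)+\sum_{j=1}^{l}\Re(\varphi(C_j))\bigr]$.

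To conclude \eqref{ene-bou-fun-cyc} it will suffice to verify the purely algebraic inequality
\[
\frac{4}{n}\Bigl[(n-1)+\sum_{j=1}^{l}\Re(\varphi(C_j))\Bigr]\geq 2\sum_{j=1}^{l}\Re(\varphi(C_j))+(5n-n^{2}-4),
\]
which, after multiplying by $n$ and rearranging, reduces to $\tfrac{n-2}{n}\bigl[(n-1)(n-2)-2\sum_{j=1}^{l}\Re(\varphi(C_j))\bigr]\geq 0$; this holds because $\sum_{j=1}^{l}\Re(\varphi(C_j))\leq l\leq \binom{n-1}{2}=\tfrac{(n-1)(n-2)}{2}$. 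Sharpness is witnessed by $\Phi=(K_n,1)$: there $\mathbf{1}$ is the Perron eigenvector of eigenvalue $n-1$ (the unique positive eigenvalue), $\mathcal{E}(K_n)=2(n-1)$, $l=\binom{n-1}{2}$, and $\sum_{j}\Re(\varphi(C_j))=l$, so every inequality in the chain is simultaneously tight.

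The only delicate step is the switching normalization, which is precisely what makes the naive test vector $\mathbf{1}$ produce a strong enough Rayleigh bound: without replacing $\Phi$ by a representative with tree-edge gains equal to $1$, the tree contribution to $\mathbf{1}^{*}A(\Phi)\mathbf{1}$ could be strictly less than $2(n-1)$ and the chain of estimates would fail. The remaining steps—the trace identity $\mathcal{E}(\Phi)\geq 2\lambda_{\max}(\Phi)$, the Rayleigh inequality, and the trivial bound $\sum \Re(\varphi(C_j))\leq \binom{n-1}{2}$—are all elementary.
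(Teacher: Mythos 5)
Your proof is correct, but it reaches \eqref{ene-bou-fun-cyc} by a genuinely different route than the paper. Both arguments begin with the same key normalization: use Lemma \ref{lm-2.1} to switch to a representative $\Phi'$ whose tree edges all have gain $1$, so that $\sum_{i,j}\varphi'(\overrightarrow{e_{i,j}})=2(n-1)+2\sum_j\Re(\varphi(C_j))$. From there the paper applies von Neumann's trace theorem (Theorem \ref{th1}) to $A(K_n)A(\Phi')$, using the singular values of $K_n$ to get $\sum_{i,j}\varphi'(\overrightarrow{e_{i,j}})\leq (n-2)|\lambda_1|+\mathcal{E}(\Phi)$, and then invokes the crude bound $|\lambda_1|=\rho(\Phi)\leq n-1$. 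You instead use the elementary chain $\mathcal{E}(\Phi)\geq 2\lambda_{\max}(\Phi')\geq \tfrac{2}{n}\mathbf{1}^{*}A(\Phi')\mathbf{1}$, which yields the stronger intermediate estimate $\mathcal{E}(\Phi)\geq 4+\tfrac{4}{n}\bigl(\sum_j\Re(\varphi(C_j))-1\bigr)$ --- this is exactly the paper's Theorem \ref{lm2}, which the paper proves separately (again via von Neumann, with $J$ in place of $A(K_n)$) --- and you then deduce \eqref{ene-bou-fun-cyc} from the purely combinatorial bound $\sum_j\Re(\varphi(C_j))\leq l\leq\binom{n-1}{2}$, since the difference of the two right-hand sides factors as $\tfrac{n-2}{n}\bigl[(n-1)(n-2)-2\sum_j\Re(\varphi(C_j))\bigr]\geq 0$ (nonnegative for $n\geq 2$, and zero for $n\leq 2$). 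Your route is more elementary (Rayleigh quotient rather than von Neumann) and exposes a logical dependency the paper does not state, namely that Theorem \ref{th-4.1} is a formal consequence of Theorem \ref{lm2}; the paper's direct trace-theorem argument is the template it reuses for Theorem \ref{lm2}. Your sharpness verification with $(K_n,1)$ matches the paper's, and correctly checks that every link in your chain is tight there. The only blemish is cosmetic: after multiplying through by $n$ the reduced inequality should read $(n-2)\bigl[(n-1)(n-2)-2\sum_j\Re(\varphi(C_j))\bigr]\geq 0$ without the residual $\tfrac{1}{n}$, but this does not affect the argument.
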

\begin{proof}
Let $ \Phi=(G, \varphi) $ be any connected $ \mathbb{T} $-gain graph. Let $T$ be a normal spanning tree of $G$, and $ \{ C_1, C_2, \dots, C_l\} $ be the collection of all fundamental cycles in $ G $ with respect to  $T$.  Define a new $ \mathbb{T} $-gain graph $ \Phi^{'} $ on $G$ such that  $ \varphi^{'}(\overrightarrow{e})=1 $ for all $ e\in E(T) $ and $\varphi(C_i) = \varphi^{'}(C_i)$ for all $i$.
So, by Lemma \ref{lm-2.1},  the $ \mathbb{T} $-gain graphs $ \Phi$ and $ \Phi^{'}$ are switching equivalent.  Therefore,  \begin{equation}\label{ga-re}
\sum\limits_{i,j} \varphi^{'}(\overrightarrow{e_{i,j}})=\sum\limits_{j=1}^{l}\{\varphi^{'}(\overrightarrow{C}_{j})+\varphi^{'}(\overrightarrow{C}_{j})^{-1}\}+2(n-1)=2\sum\limits_{j=1}^{l}\Re\big(\varphi(C_{j}) \big)+2(n-1) .
\end{equation}
Now,  $ \Re\big( \tr(A(K_n)A(\Phi^{'})) \big) = \sum\limits_{ i,j} \varphi^{'}(\overrightarrow{e_{i,j}})$. Let $ |\lambda_{1}| \geq |\lambda_{2}| \geq \dots \geq| \lambda_{n}| $ be the singular values of $ \Phi^{'} $. Since $ \spec(K_n)=\{-1^{(n-1)}, (n-1)^{(1)} \} $, by  Theorem \ref{th1}, we have  $\Re(\tr(A(K_n)A(\Phi^{'})) \leq
 (n-1)|\lambda_{1}|+\sum\limits_{j=2}^{n-1}|\lambda_j|$, and hence  $\sum\limits_{ i,j} \varphi^{'}(\overrightarrow{e_{i,j}})
 \leq(n-2)|\lambda_{1}|+\mathcal{E}(\Phi^{'})$. So, by equation (\ref{ga-re}), we have
$2\sum\limits_{j=1}^{l}\Re\big(\varphi(C_{j}) \big)+2(n-1)\leq(n-2)|\lambda_{1}|+\mathcal{E}(\Phi^{'}).$
Now,  using Theorem \ref{th-2.1},  we get  $|\lambda_{1}|=\rho(\Phi^{'})=\rho(\Phi)\leq \rho(G)\leq \Delta\leq  (n-1)  $.  Thus
$$\sum\limits_{j=1}^{l}\Re\big(\varphi(C_{j})\big)+2(n-1)\leq(n-2)(n-1)+\mathcal{E}(\Phi),$$
 and hence $$\mathcal{E}(\Phi) \geq 2\sum\limits_{j=1}^{l}\Re\big(\varphi(C_{j}) \big) +(5n-n^2-4).$$
Now, if $ \Phi\sim(K_n,1) $, then $2\sum\limits_{j=1}^{l}\Re\big(\varphi(C_{j}) \big)= n^2-3n+2 $ and hence equality holds in equation (\ref{ene-bou-fun-cyc}).
\end{proof}

If $G$ is a complete bipartite graph, then $A(G)$ has exactly one positive eigenvalue. Also, if $G$ is any non-complete bipartite graph on more than $4$ vertices, then it contains $P_4$ as an induced subgraph, and hence $A(G)$ has at least two positive eigenvalues. So, if  $G$  is a bipartite graph on more than $4$ vertices, then $G$ is complete bipartite if and only if $A(G)$ has exactly one positive eigenvalue.  Our next objective is to study the counter part of this property for the  $\mathbb{T}$-gain graphs. The following lemma is a key in the proof of Theorem \ref{lem5}. This gives a sufficient condition a $\mathbb{T}$-gain graph to be balanced.
\begin{lemma} \label{lm3}
Let $ \Phi=(G, \varphi) $ be any $ \mathbb{T} $-gain graph on a complete bipartite graph $ G $. If  every $ 4 $-cycle which  passes through the vertex $v$, for some vertex $ v $ of $ G $,  has gain $1$, then $ \Phi $ is balanced.
\end{lemma}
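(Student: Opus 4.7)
The plan is to exhibit an explicit diagonal unitary $U$ realizing the switching equivalence $\Phi \sim (G, 1)$, which immediately gives balance. I will write $G = K_{m,n}$ with bipartition $V(G) = X \sqcup Y$ and may assume $v \in X$ by symmetry. The case $\min(m,n) = 1$ is trivial since $G$ is then a tree, so assume $m, n \geq 2$.

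The next step is to fix any $y^* \in Y$ and define $U$ by setting $U_{vv} = 1$, $U_{yy} = \varphi(\overrightarrow{e_{v,y}})$ for each $y \in Y$, and $U_{xx} = \varphi(\overrightarrow{e_{v,y^*}})\,\overline{\varphi(\overrightarrow{e_{x,y^*}})}$ for each $x \in X \setminus \{v\}$. Each diagonal entry lies in $\mathbb{T}$, so $U$ is unitary. I then check that the entries of $A' := U A(\Phi) U^{*}$ corresponding to edges all equal $1$. For an edge $\{v, y\}$, the $(v,y)$-entry of $A'$ equals $1$ directly from the definition of $U_{yy}$. For an edge $\{x, y\}$ with $x \in X \setminus \{v\}$, the $(x,y)$-entry of $A'$ expands to
\[
\varphi(\overrightarrow{e_{v,y^*}})\,\overline{\varphi(\overrightarrow{e_{x,y^*}})}\,\varphi(\overrightarrow{e_{x,y}})\,\overline{\varphi(\overrightarrow{e_{v,y}})},
\]
which is trivially $1$ when $y = y^*$ and otherwise is precisely the gain of the 4-cycle $v\,y^*\,x\,y\,v$ through $v$; by hypothesis, this equals $1$. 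Consequently $A'$ is the adjacency matrix of $(G,1)$, and $\Phi$ is balanced.

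The only non-obvious ingredient is the formula for $U_{xx}$. Conceptually, once the $U_{yy}$'s are used to trivialize the gains along edges incident to $v$, the hypothesis on 4-cycles through $v$ is exactly the compatibility condition ensuring that the quantity $\overline{\varphi(\overrightarrow{e_{x,y^*}})}\,\varphi(\overrightarrow{e_{x,y}})$ is independent of $y$, and this independence is what allows a single choice of $U_{xx}$ to simultaneously trivialize the gains on all edges incident to $x$. Spotting this is the main obstacle; once it is in hand, the rest is a short direct computation.
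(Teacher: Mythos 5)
Your proof is correct, but it takes a genuinely different route from the paper's. The paper argues purely combinatorially in two steps: first it shows that \emph{every} $4$-cycle of $G$ has gain $1$ (an arbitrary $4$-cycle $v_1\,v_2\,v_3\,v_4$ avoiding $v$ is the product of the two $4$-cycles $v_1\,v_2\,v\,v_4$ and $v\,v_2\,v_3\,v_4$, both through $v$), and then it decomposes an arbitrary even cycle $C_{2p}$ into $4$-cycles sharing a vertex (using the chords $v_1v_4, v_1v_6,\dots$, which exist because $G$ is complete bipartite), so that every cycle has gain $1$ and balance follows directly from the definition. You instead build an explicit switching potential: the $U_{yy}$'s trivialize the star at $v$, and the hypothesis on $4$-cycles through $v$ is exactly the consistency condition making $\overline{\varphi(\overrightarrow{e_{x,y^*}})}\,\varphi(\overrightarrow{e_{x,y}})$ independent of $y$, so a single $U_{xx}$ works for all edges at $x$; your entry computation is right, since $\varphi(\overrightarrow{e_{v,y^*}})\,\overline{\varphi(\overrightarrow{e_{x,y^*}})}\,\varphi(\overrightarrow{e_{x,y}})\,\overline{\varphi(\overrightarrow{e_{v,y}})}$ is precisely the gain of the $4$-cycle $v\,y^*\,x\,y\,v$. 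Your argument yields the stronger, more quantitative conclusion $\Phi\sim(G,1)$ with the switching matrix in hand, at the cost of one extra (standard, and used implicitly throughout the paper) fact — that conjugation by a diagonal unitary preserves all cycle gains, so $\Phi\sim(G,1)$ indeed implies balance; the paper's decomposition argument verifies the definition of balance directly without invoking switching at all. Both proofs are complete and of comparable length.
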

\begin{proof}
Let $ \Phi=(G, \varphi) $ be any $ \mathbb{T} $-gain graph on a complete bipartite graph $ G $. Let $ v $ be a vertex in $ G $ such that the gain of any $ 4 $-cycle passing through the vertex $ v $ is $ 1 $. First let us show that gain of any four cycle in $ G $ is $1$. Let $ C_4 \equiv v_1-v_2-v_3-v_4-v_1$ be any $ 4 $-cycle in $ \Phi$ such that $C_4$ does not contain the vertex $v$. Without loss of generality, let us assume that $v_2 \sim v$. Now, consider the two $ 4 $-cycles: $ C_4(v)\equiv v_1-v_2-v-v_4-v_1 $ and $C_4^{'}(v)\equiv v-v_2-v_3-v_4-v.$ Then $ \varphi(\overrightarrow{C_4})=\varphi(\overrightarrow{C_4(v)})\varphi(\overrightarrow{C_4^{'}(v)})=1 $.
	  	  	
Let $C_{2p}$ be any cycle in $G$ on $ 2p $ vertices. Without loss of generality, let us assume that $C_{2p}\equiv v_1-v_2-v_3-v_4-\dots-v_{(2p-1)}-v_{2p}-v_1$. Then
\begin{align*}
\varphi(\overrightarrow{C_{2p}})=& \varphi(\overrightarrow{e_{1,2}})\varphi(\overrightarrow{e_{2,3}})\varphi(\overrightarrow{e_{3,4}})\dots \varphi(\overrightarrow{e_{(2p-1),2p}})\varphi(\overrightarrow{e_{2p,1}})\\
 =& \left\{ \varphi(\overrightarrow{e_{1,2}})\varphi(\overrightarrow{e_{2,3}})\varphi(\overrightarrow{e_{3,4}})\varphi(\overrightarrow{e_{4,1}}) \right\} \\
&\left\{ \varphi(\overrightarrow{e_{1,4}})\varphi(\overrightarrow{e_{4,5}})\varphi(\overrightarrow{e_{5,6}})\varphi(\overrightarrow{e_{6,1}})\right\} \\
& \quad \quad \quad \vdots\\ &\left\{\varphi(\overrightarrow{e_{1,(2p-2)}})\varphi(\overrightarrow{e_{(2p-2),(2p-1)}})\varphi(\overrightarrow{e_{(2p-1),2p}})\varphi(\overrightarrow{e_{2p,1}})\right\}\\
=&1.
\end{align*}
Thus $\Phi$ is balanced.
\end{proof}

\begin{theorem} \label{lem5}
Let $ \Phi=(G,\varphi) $ be any $ \mathbb{T} $-gain graph on a connected bipartite graph $ G $. Then $ \Phi $ has exactly one positive eigenvalue if and only if $ \Phi $ is a balanced complete bipartite graph.
\end{theorem}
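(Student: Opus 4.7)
The plan is to handle the two directions separately. The reverse direction is immediate: if $\Phi$ is balanced and $G=K_{p,q}$, then $\Phi\sim(K_{p,q},1)$, so $\spec(\Phi)=\spec(K_{p,q})=\{\sqrt{pq},\,0,\ldots,0,\,-\sqrt{pq}\}$, which contains exactly one positive eigenvalue.

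For the forward direction, I assume $\Phi$ has exactly one positive eigenvalue and proceed in two stages: first I show the underlying graph must be complete bipartite (via an induced-$P_4$ obstruction), then I show the gain must be balanced (via the gains of induced $4$-cycles combined with Lemma~\ref{lm3}).

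For the first stage, let $G$ have bipartition $(X,Y)$ and suppose, for contradiction, that $G$ is not complete bipartite. Then there exist $x\in X$ and $y\in Y$ with $xy\notin E(G)$, and since $G$ is connected and bipartite the shortest $xy$-path has odd length at least $3$, say $x$--$a$--$b$--$y$. Bipartiteness rules out edges on same-side pairs, and minimality of the path rules out the chord $ay$ or $xb$, so $\{x,a,b,y\}$ induces a $P_4$. Restricting $\varphi$ to this $P_4$ gives a $\mathbb{T}$-gain tree, which is trivially balanced; hence the corresponding principal submatrix of $A(\Phi)$ is unitarily equivalent to $A(P_4)$, whose spectrum $\{\pm\tfrac{1+\sqrt{5}}{2},\pm\tfrac{\sqrt{5}-1}{2}\}$ contains two positive eigenvalues. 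Cauchy interlacing then forces $A(\Phi)$ to have at least two positive eigenvalues, a contradiction. Hence $G=K_{p,q}$.

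For the second stage, assume $\min(p,q)\geq 2$ (otherwise $G$ is a tree and $\Phi$ is balanced by default). For any $4$-cycle $C\equiv v_1v_2v_3v_4v_1$ in $K_{p,q}$, bipartiteness places $v_1,v_3$ on one side and $v_2,v_4$ on the other, so $C$ is induced in $G$. Writing $\varphi(C)=e^{i\theta}$ with $\theta\in[0,2\pi)$, Theorem~\ref{th-2.3} gives the spectrum of the induced gain $C_4$ as $\{\pm 2\cos(\theta/4),\pm 2\sin(\theta/4)\}$. Cauchy interlacing forces this spectrum to have at most one positive eigenvalue, which (since $\cos(\theta/4)>0$ throughout the range) happens only when $\sin(\theta/4)=0$, i.e., $\theta=0$ and $\varphi(C)=1$. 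Every $4$-cycle of $G$, in particular every $4$-cycle through any fixed vertex, is therefore balanced, and Lemma~\ref{lm3} yields that $\Phi$ is balanced. The main conceptual step is identifying these two minimal spectral obstructions ($P_4$ and $C_4$); once identified, the rest is interlacing plus an appeal to Theorem~\ref{th-2.3} and Lemma~\ref{lm3}.
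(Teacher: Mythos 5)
Your proof is correct and follows essentially the same route as the paper's: an induced-$P_4$ interlacing obstruction to show $G$ is complete bipartite, followed by the spectrum of gain $4$-cycles (Theorem \ref{th-2.3}) together with interlacing and Lemma \ref{lm3} to force balance. The only cosmetic difference is that you extract the induced $P_4$ directly from (four consecutive vertices of) a shortest path joining a non-adjacent cross pair, whereas the paper deduces from the absence of induced $P_4$'s that $G$ has diameter at most $2$ and is complete multipartite.
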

\begin{proof}
Let $ \Phi = (G,\varphi) $ have exactly one positive eigenvalue.
If the number of vertices of $ G $ is two or three, then $ G $ must be $ K_2 $ or $ K_{1,2} $, respectively. Therefore, in both the cases, $ \Phi $ is a balanced compete bipartite graph. Now we consider a graph $ G $ with $ |V(G)|\geq 4 $. Suppose that $ P_4 $ is an induced subgraph of $ G $. So $ (P_4, \varphi) $ is an induced $ \mathbb{T} $-gain subgraph of $ \Phi $. As $P_4$ is a tree, so the spectrum of $P_4$ with respect to $\varphi$ is same as that of $\spec(P_4)$. Thus $ P_4 $ has two positive eigenvalue with respect to $\varphi$. Therefore, by the interlacing theorem, $ \Phi $ has at least two  positive eigenvalue,  a contradiction. Thus $ G $ can not have $ P_4 $ as an induced subgraph, and hence the diameter of $ G $ is at most $ 2 $. Now it is easy to see that any two non adjacent vertices have the same neighbors. Thus $ G $ is complete multipartite. But $ G $ is bipartite, so $ G $ is complete bipartite.

 Now we consider the following two cases to show that $\Phi$ is balanced.\\
{\bf Case 1:} If $ G $  does not contain any cycles, then  $ G $ must be a star, and hence $ \Phi $ is  balanced.\\
{\bf Case 2:} If $ G $ contains cycles, then it must contains an induced $ C_4 $. Let $ \varphi(\overrightarrow{C_4})=e^{i\theta}, \theta \in [0, 2\pi)$.
	  	
Let $ C =( C_4, \varphi) $ be an induced subgraph of $ \Phi $ whose underlying graph is $ C_4 $.
Therefore, by  Theorem \ref{th-2.3}, we have
\begin{align*} \spec(C)=\left\{2\cos\left(\frac{\theta}{4}\right),2\cos\left(\frac{\theta}{4}+\frac{\pi}{2}\right),2\cos\left(\frac{\theta}{4}+\pi\right),2\cos\left(\frac{\theta}{4}+\frac{3\pi}{2}\right)\right\}
\end{align*}
Let $x=\frac{\theta}{4}\in[0,\frac{\pi}{2})$. It is easy to see that $\spec(C)$ has two positive and two negative eigenvalues if and only if $x\in(0,\frac{\pi}{2})$. Hence $\spec(C)$ has exactly one positive eigenvalue if and only if $x=0$. Now,  by interlacing theorem, $\Phi$ cannot have any induced $ 4 $-cycle $ C_4 $ such that $\varphi(\overrightarrow{{C_4}})=e^{i\theta}$,  where  $\theta\in(0,2\pi)$.
Therefore, for any induced $ 4 $-cycle $ C $ in $ G $, we have $ \varphi(C)=1 $. Thus, by  Lemma \ref{lm3}, $ \Phi $ is  balanced.
Conversely, if $\Phi$ is a balanced complete bipartite $ \mathbb{T} $-gain graph, then $\Phi$ has exactly one positive eigenvalue.
\end{proof}
	  	
Next result gives a lower bound of  energy of $ \mathbb{T} $-gain graph in terms of spectral radius.	
	  	
\begin{theorem} \label{Th6}
If $ \Phi=(G, \varphi) $ be any $ \mathbb{T} $-gain graph on a  connected graph $ G $. Then $ \mathcal{E}(\Phi) \geq 2 \rho(\Phi) $. If $ G $ is bipartite then equality occurs if and only if $ \Phi\sim(K_{p,q},1)$ for some $ p, q $.
\end{theorem}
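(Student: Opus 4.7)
The plan is to exploit the trace condition. Since $A(\Phi)$ is Hermitian with zero diagonal, $\sum_{i=1}^{n}\lambda_i = \tr(A(\Phi)) = 0$. Splitting the eigenvalues into positives and negatives, this gives
$$\sum_{\lambda_i>0}\lambda_i = \sum_{\lambda_i<0}|\lambda_i| = \tfrac{1}{2}\mathcal{E}(\Phi).$$
Since the single largest eigenvalue $\lambda_1$ is at most the sum of all positive eigenvalues, and $|\lambda_n|$ is at most the sum of all negative absolute values, we obtain $\mathcal{E}(\Phi) \ge 2\lambda_1$ and $\mathcal{E}(\Phi) \ge 2|\lambda_n|$. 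Combining, $\mathcal{E}(\Phi) \ge 2\max\{\lambda_1,|\lambda_n|\} = 2\rho(\Phi)$. This settles the inequality without needing any assumption on $G$.

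For the equality part, suppose $\mathcal{E}(\Phi)=2\rho(\Phi)$. Without loss of generality (replacing $\Phi$ by $-\Phi$ if necessary, which changes neither $\mathcal{E}$ nor $\rho$), assume $\rho(\Phi)=\lambda_1$. Then equality forces $\lambda_1 = \sum_{\lambda_i>0}\lambda_i$, i.e.\ $\Phi$ has \emph{exactly one} positive eigenvalue. Now I would invoke Theorem \ref{lem5}: for a $\mathbb{T}$-gain graph on a connected bipartite graph, having exactly one positive eigenvalue is equivalent to being switching equivalent to a balanced complete bipartite graph. Hence $\Phi \sim (K_{p,q},1)$ for some $p,q$.

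The converse is a direct verification: if $\Phi \sim (K_{p,q},1)$ then $\spec(\Phi)=\spec(K_{p,q})=\{\sqrt{pq},0^{(p+q-2)},-\sqrt{pq}\}$, so $\mathcal{E}(\Phi)=2\sqrt{pq}=2\rho(\Phi)$.

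The only subtle point is the case in which $\rho(\Phi)=-\lambda_n$ rather than $\lambda_1$; this is handled by passing from $\Phi$ to $-\Phi$ (noting that bipartiteness of $G$ together with Theorem \ref{th-2.2} means balanced-ness behaves symmetrically, so working with $-\Phi$ still lands us at $\Phi\sim(K_{p,q},1)$). Aside from this, the argument is essentially a trace-plus-triangle-inequality computation, with Theorem \ref{lem5} doing the structural heavy lifting.
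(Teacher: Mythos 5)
Your proof is correct and follows essentially the same route as the paper: the bound comes from $\tr(A(\Phi))=0$ plus the triangle inequality, and the equality case is reduced to ``exactly one positive eigenvalue'' and then handed off to Theorem \ref{lem5}. Your treatment is in fact slightly more careful than the paper's, which tacitly works with $\lambda_1$ only; your explicit handling of the $\rho(\Phi)=-\lambda_n$ case via $-\Phi$ and Theorem \ref{th-2.2} closes a small gap the paper glosses over (and which is also moot since the spectrum of a bipartite gain graph is symmetric about the origin).
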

\begin{proof}
Let $ \{ \lambda_1, \lambda_2, \dots, \lambda_n\} $ be the spectrum of $ \Phi $ such that $ \lambda_1 \geq \lambda_2 \geq \dots \geq \lambda_n $. Now $\lambda_1 + \lambda_2 + \dots +\lambda_n =0$. Therefore, $ 2|\lambda_1| \leq |\lambda_1|+|\lambda_2|+\dots +|\lambda_n| $. Thus $ \mathcal{E}(\Phi) \geq 2\rho(\Phi) $.
	  	
Let $ G $ be bipartite. Since $ |\lambda_1|=|\lambda_2|+\dots +|\lambda_n| $ holds if and only if all of $ \lambda_j's $, for $ j=2, 3, \dots, n $ are of the same sign. Therefore, equality occur if and only if $ \Phi $ has only one positive eigenvalue. Hence, by the Theorem \ref{lem5}, equality holds if and only if $ \Phi\sim(K_{p,q}, 1) $.
\end{proof}

Let $ J $ denote the all $1$'s matrix of appropriate size. The following two theorems provide a lower bound for energy of $ \mathbb{T} $-gain graph in terms of the number of vertices and the gains of fundamental cycles.
	
\begin{theorem} \label{lm2}
 Let $ \Phi=(G, \varphi) $ be any connected  $ \mathbb{T} $-gain graph with $ n $ vertices and $ \{ C_1, C_2, \dots, C_l\} $ be the collection of all fundamental cycles in $ G $ with respect to a  normal spanning tree $ T $. Then
\begin{equation}\label{eq3}
\mathcal{E}(\Phi) \geq  4+ \frac{4}{n} \left\{ \sum_{j=1}^{l}\Re(\varphi(C_j))-1 \right\}.
\end{equation}
The inequality is sharp.
\end{theorem}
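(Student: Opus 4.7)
The plan is to mimic the strategy used in Theorem \ref{th-4.1}, but to replace the complete graph's adjacency matrix $A(K_n)$ with the all-ones matrix $J$ in the application of von Neumann's trace inequality (Theorem \ref{th1}), and then to combine the resulting estimate with the lower bound $\mathcal{E}(\Phi) \geq 2\rho(\Phi)$ supplied by Theorem \ref{Th6}. The motivation is that $J$ has rank one with singular values $\{n,0,\dots,0\}$, so the von Neumann inequality collapses to a single term proportional to $\rho(\Phi)$, which is exactly what is needed to produce the factor $\frac{4}{n}$ on the right-hand side of \eqref{eq3}.

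First, I would choose a normal spanning tree $T$ of $G$ and, using Lemma \ref{lm-2.1}, pass to the switching-equivalent $\mathbb{T}$-gain graph $\Phi'=(G,\varphi')$ satisfying $\varphi'(\overrightarrow{e})=1$ for every $e\in E(T)$ and $\varphi'(\overrightarrow{C_j})=\varphi(\overrightarrow{C_j})$ for each fundamental cycle $C_j$. Since $\mathcal{E}(\Phi)=\mathcal{E}(\Phi')$ and $\rho(\Phi)=\rho(\Phi')$, I can work with $\Phi'$. A direct computation, exactly as in the derivation of \eqref{ga-re}, then gives
\begin{equation*}
\Re(\tr(J\,A(\Phi'))) \;=\; \sum_{p,q}\Re(\varphi'(\overrightarrow{e_{p,q}})) \;=\; 2(n-1)\;+\;2\sum_{j=1}^{l}\Re(\varphi(C_j)),
\end{equation*}
where the tree edges contribute $2(n-1)$ (each has gain $1$) and each non-tree edge, together with the unique tree path closing it, contributes $2\Re(\varphi(C_j))$.

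Next, applying Theorem \ref{th1} to $J$ and $A(\Phi')$ and using that the only nonzero singular value of $J$ is $n$, I obtain
\begin{equation*}
2(n-1)+2\sum_{j=1}^{l}\Re(\varphi(C_j)) \;=\; \Re(\tr(J\,A(\Phi'))) \;\leq\; n\,\rho(\Phi').
\end{equation*}
Dividing by $n/2$ and invoking Theorem \ref{Th6}, which yields $\mathcal{E}(\Phi)\geq 2\rho(\Phi)=2\rho(\Phi')$, I conclude
\begin{equation*}
\mathcal{E}(\Phi) \;\geq\; 2\rho(\Phi) \;\geq\; \frac{4(n-1)+4\sum_{j=1}^{l}\Re(\varphi(C_j))}{n} \;=\; 4+\frac{4}{n}\!\left(\sum_{j=1}^{l}\Re(\varphi(C_j))-1\right),
\end{equation*}
which is the claimed inequality.

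There is no serious obstacle: the only conceptual step is to notice that $J$ is the right test matrix to extract a bound scaling like $1/n$ together with $\rho(\Phi)$, after which the argument is a short calculation parallel to the proof of Theorem \ref{th-4.1}. To verify sharpness I would check a small example such as $\Phi\sim(K_{p,q},1)$, where Theorem \ref{Th6} says $\mathcal{E}(\Phi)=2\rho(\Phi)$, so that the second inequality above is tight, and the first inequality can then be analyzed by directly computing $\rho(\Phi)$ and $\sum_j\Re(\varphi(C_j))$ in that family.
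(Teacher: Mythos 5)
Your proposal is correct and follows essentially the same route as the paper: switch to $\Phi'$ with trivial gains on the tree, apply von Neumann's inequality with the all-ones matrix $J$ (whose only nonzero singular value is $n$) to get $2(n-1)+2\sum_j\Re(\varphi(C_j))\leq n\rho(\Phi')$, and combine with $\mathcal{E}(\Phi)\geq 2\rho(\Phi)$. The only minor difference is in the sharpness check: a general balanced $(K_{p,q},1)$ attains equality only when $p=q$, so you should verify with $(K_{\frac{n}{2},\frac{n}{2}},1)$ or, as the paper does, with a balanced regular complete multipartite graph $K_{r,\dots,r}$.
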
  	
\begin{proof}
 Let $ \{ \lambda_1, \lambda_2, \dots, \lambda_n \} $ be the spectrum of $ \Phi $ such that $ |\lambda_1|\geq |\lambda_2|\geq \dots \geq |\lambda_n| $.  Define a  new  $ \mathbb{T} $-gain $ \Phi^{'} $ on $G$ such that  $ \varphi^{'}(\overrightarrow{e})=1 $ for all $ e\in E(T) $ and $\varphi(C_i) = \varphi^{'}(C_i)$ for all $i$.
So, by Lemma \ref{lm-2.1},  the $ \mathbb{T} $-gain graphs $ \Phi$ and $ \Phi^{'}$ are switching equivalent.  Then  $ \sum\limits_{ i,j} \varphi^{'}(\overrightarrow{e_{ij}})=2\sum\limits_{j=1}^{l}\Re\big(\varphi(C_{j}) \big)+2(n-1) $.   By Theorem \ref{th1}, we have $$\Re (\tr( A(\Phi) J)) \leq n |\lambda_1|,$$ and hence
$$2(n-1)+2\sum_{j=1}^{l}\Re(\varphi(C_j))  \leq n |\lambda_1|.$$
As $ |\lambda_{1}|\leq |\lambda_{2}|+\dots +|\lambda_n| $, so $ |\lambda_1|\leq \frac{\mathcal{E}(\Phi)}{2} $. Therefore,
\begin{equation*}
 \mathcal{E}(\Phi) \geq 4+ \frac{4}{n} \left\{ \sum_{j=1}^{l}\Re(\varphi(C_j))-1 \right\}.
\end{equation*}

Let us take $ \Phi \sim (G, 1) $, where $ G=K_{r,r,\cdots, r}$ is a connected complete $ p $-partite graph on $ m $ edges and $ n $ vertices. Then the right hand side expression (\ref{eq3}) becomes $ 4+\frac{4}{n}\left( m-n+1-1\right) $, which is $ \frac{4m}{n} $. Since $ G $ is a complete multipartite graph, $ G $ has exactly one positive eigenvalue. Thus, if $\lambda_1,\dots,\lambda_n$ are the eigenvalues of $\Phi$,  then $ |\lambda_1|=|\lambda_2|+\cdots +|\lambda_n| $, and hence $ \mathcal{E}(\Phi)=2|\lambda_1|=2\lambda_1$.  Also the spectral radius of $\Phi$ is $ (r-1) $, the degree of each vertex in $G$, and the  degree of each vertex of $ G $ is $ \frac{2m}{n} $. Therefore, $ \mathcal{E}(\Phi)=\frac{4m}{n} $. Hence the inequality is sharp.
\end{proof}
	  	
	  	If the underlying graph is a bipartite graph, then we can completely characterize the classes for which equality holds in (\ref{eq3}).
	  	
\begin{corollary}
Let $ \Phi=(G, \varphi)$ be any connected $ \mathbb{T} $-gain graph on a bipartite graph $ G $ with $ n $ vertices. Then
\begin{equation*}
\mathcal{E}(\Phi) \geq  4+ \frac{4}{n} \left\{ \sum_{j=1}^{l}\Re(\varphi(C_j))-1 \right\}.
\end{equation*}
Equality occurs if and only if $ \Phi \sim (K_{\frac{n}{2}, \frac{n}{2}}, 1) $.
\end{corollary}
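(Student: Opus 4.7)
The inequality itself is Theorem \ref{lm2} restricted to bipartite underlying graphs, so the real work is characterizing when equality holds. My plan is to trace back through the two inequalities used in the proof of Theorem \ref{lm2}: (i) the von Neumann trace bound $\Re(\tr(A(\Phi)J)) \leq n|\lambda_1|$, which yielded $2(n-1) + 2\sum_j \Re(\varphi(C_j)) \leq n|\lambda_1|$, and (ii) the elementary bound $|\lambda_1| \leq \mathcal{E}(\Phi)/2$, arising from $\sum_i \lambda_i = 0$. Equality in the corollary forces both (i) and (ii) to be tight simultaneously.

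I would first exploit equality in (ii). Writing $\lambda_1 = -\sum_{i \geq 2} \lambda_i$ and applying the triangle inequality, equality in (ii) is equivalent to $\bigl|\sum_{i \geq 2}\lambda_i\bigr| = \sum_{i \geq 2}|\lambda_i|$, which forces all $\lambda_i$ for $i \geq 2$ to share a common sign. Combined with $\lambda_1 > 0$ (the trivial edgeless case aside), this means $\Phi$ has exactly one positive eigenvalue. Since $G$ is bipartite and connected, Theorem \ref{lem5} then yields $\Phi \sim (K_{p,q}, 1)$ for some positive integers $p, q$ with $p + q = n$.

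Once this structure is in place, the remainder is a direct computation. For $\Phi \sim (K_{p,q}, 1)$, the spectrum is $\{\sqrt{pq}, 0, \dots, 0, -\sqrt{pq}\}$, so $\mathcal{E}(\Phi) = 2\sqrt{pq}$; and since the graph has $m = pq$ edges, the number of fundamental cycles is $l = m - n + 1 = (p-1)(q-1)$, each with gain $1$ by balancedness. A short simplification reduces the right-hand side of (\ref{eq3}) to $\frac{4pq}{p+q}$, and the identity $2\sqrt{pq} = \frac{4pq}{p+q}$ is equivalent to $(p+q)^2 = 4pq$, which holds iff $p = q = n/2$. The converse (that $\Phi \sim (K_{n/2,n/2},1)$ makes both sides equal $n$) is a direct substitution.

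The main obstacle, or rather the key step, is the passage from equality in (ii) to $\Phi \sim (K_{p,q}, 1)$, which crucially invokes the classification in Theorem \ref{lem5}. Note that equality in (i) need not be analyzed separately: once (ii) pins down the structure of $\Phi$ up to switching, the remaining equality in (\ref{eq3}) reduces to the single AM-GM equality $(p+q)^2 = 4pq$, and everything collapses.
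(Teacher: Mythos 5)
Your proposal is correct and follows essentially the same route as the paper: the paper likewise traces equality back through $\mathcal{E}(\Phi)\geq 2|\lambda_1|\geq$ RHS, invokes the ``exactly one positive eigenvalue'' characterization (via its Theorem \ref{Th6}, whose proof is exactly your Theorem \ref{lem5} argument) to get $\Phi\sim(K_{p,q},1)$, and then reduces to $\sqrt{pq}=\frac{2pq}{p+q}$, i.e.\ $p=q=\frac{n}{2}$. The only difference is that you inline the proof of Theorem \ref{Th6} rather than citing it.
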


\begin{proof}
Let $ \Phi=(G, \varphi) $ be any connected $ \mathbb{T} $-gain graph on a bipartite graph $ G $ with $ m $ edges and $ n $ vertices. Let $ \{\lambda_1, \lambda_2, \cdots, \lambda_n \}$ be the spectrum of $ \Phi $ such that $ |\lambda_1|\geq |\lambda_2|\cdots \geq |\lambda_n| $. Let $ \{ C_1, C_2, \cdots, C_l\} $ be the fundamental cycles of $ G $ with respect to a normal spanning tree $ T $. Now the
inequality is clear from the Theorem \ref{lm2}. Let us consider the equality,  $$\mathcal{E}(\Phi)=  4+ \frac{4}{n} \left\{ \sum_{j=1}^{l}\Re(\varphi(C_j))-1 \right\}.$$ Then from the proof of the Theorem \ref{lm2}, we have $$ \mathcal{E}(\Phi)\geq 2|\lambda_1|\geq  4+ \frac{4}{n} \left\{ \sum_{j=1}^{l}\Re(\varphi(C_j))-1 \right\} .$$  Since $\Phi $ is a bipartite $ \mathbb{T} $-gain graph , so $ \Phi $ must satisfy the following equality.
\begin{enumerate}
\item[(i)] $ \mathcal{E}(\Phi)=2|\lambda_1|=2\rho(\Phi)$
\item[(ii)] $ 2\lambda_1=2|\lambda_1|=4+ \frac{4}{n} \left\{ \sum_{j=1}^{l}\Re(\varphi(C_j))-1 \right\} $
\end{enumerate}
Now by the Theorem \ref{Th6}, the equation $ (i) $ is satisfied if and only if $ \Phi\sim (K_{p,q},1) $, for some $ p$ and $q $. Then $ \Phi $ is balanced, so from equation $ (ii) $, we have $ \lambda_1 =\frac{2m}{n}$. Since $ \Phi  \sim (K_{p,q}, 1)$, so $ \lambda_1=\sqrt{pq} $, $ n=p+q $ and $ m=pq $. Therefore, we have  $ \sqrt{pq}=\frac{2pq}{p+q} $. That is $ p=q=\frac{n}{2} $. Thus $ \Phi \sim (K_{\frac{n}{2}, \frac{n}{2}}, 1) $.  Converse is easy to verify.
\end{proof}

The following lemma is about the change in the energy of a graph obtained from a graph by removing a cut set. This will be useful in the proof of some of the following results.
\begin{lemma} \label{lm6}
Let $ \Phi=(G, \varphi) $ be a $ \mathbb{T} $-gain graph and $ E $ be a cut set of $ \Phi $. Then $ \mathcal{E}(\Phi-E) \leq \mathcal{E}(\Phi).$
\end{lemma}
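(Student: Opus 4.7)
The plan is to write the adjacency matrix $A(\Phi)$ in block form adapted to the partition determined by the cut set, recognize that $A(\Phi-E)$ is precisely its block-diagonal part, and then invoke Theorem~\ref{lm11}.

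First, I would unpack the hypothesis. Since $E$ is a cut set, there is a partition $V(G)=U\cup W$ with $E=\{e_{p,q}\in E(G):v_p\in U,\,v_q\in W\}$, and $U$ and $W$ induce complementary subgraphs $H$ and $G-H$. After reordering the vertices so that those in $U$ come first, the Hermitian matrix $A(\Phi)$ takes the form
$$
A(\Phi)=\begin{pmatrix} A_{11} & A_{12}\\ A_{12}^{*} & A_{22}\end{pmatrix},
$$
where $A_{11}=A\big((H,\varphi)\big)$ and $A_{22}=A\big((G-H,\varphi)\big)$ are the adjacency matrices of the two induced $\mathbb{T}$-gain subgraphs, and $A_{12}$ records the gains assigned to the cut edges in $E$. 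Deleting those edges leaves the two induced subgraphs intact and zeros out the off-diagonal blocks, so
$$
A(\Phi-E)=\begin{pmatrix} A_{11} & 0\\ 0 & A_{22}\end{pmatrix},
$$
whence $\mathcal{E}(\Phi-E)=\mathcal{E}(A_{11})+\mathcal{E}(A_{22})$.

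Next, I would note the standard fact that for a Hermitian matrix the singular values coincide with the absolute values of the eigenvalues, so for all the matrices appearing above the graph-theoretic energy equals the sum of singular values; in particular, the $\mathcal{E}(\cdot)$ used in Theorem~\ref{lm11} agrees with the $\mathcal{E}(\cdot)$ defined for $\mathbb{T}$-gain graphs. Applying Theorem~\ref{lm11} to the block partition of $A(\Phi)$ then yields
$$
\mathcal{E}(\Phi-E)=\mathcal{E}(A_{11})+\mathcal{E}(A_{22})\le \mathcal{E}\!\left(A(\Phi)\right)=\mathcal{E}(\Phi),
$$
which is the desired inequality.

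There is essentially no obstacle here: the whole content is the observation that a cut set removal corresponds exactly to zeroing the off-diagonal blocks of a Hermitian block decomposition, after which the inequality is an immediate consequence of Theorem~\ref{lm11}. The only thing to be careful about is making the identification between the energy of the $\mathbb{T}$-gain graph and the sum of singular values of its (Hermitian) adjacency matrix explicit, so that Theorem~\ref{lm11} applies in the form stated.
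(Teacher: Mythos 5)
Your proof is correct and follows essentially the same route as the paper: both reorder the vertices according to the cut, write $A(\Phi)$ in $2\times 2$ Hermitian block form with the induced subgraphs' adjacency matrices on the diagonal, observe that $A(\Phi-E)$ is the block-diagonal part, and invoke Theorem~\ref{lm11}. Your extra remark that the energy of a Hermitian matrix equals its sum of singular values is a useful clarification the paper leaves implicit, but it does not change the argument.
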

\begin{proof}
For any cut set $ E $ of $ G $, there exist two induced sub graphs  $ L $ and $ M $ complement to each other in $ G $ such that  $ G-E=L \oplus M $.
Then $ \Phi-E=(L, \varphi)\oplus (M, \varphi) $. Now $ A(\Phi) $ can be expressed as $ \left[ \begin{array}{cc}
A((L, \varphi))& X \\ X^{*} & A((M, \varphi)) \end{array} \right] $. Therefore by the Theorem \ref{lm11}, $ \mathcal{E}(\Phi)\geq \mathcal{E}(A(L, \varphi))+\mathcal{E}(A(M, \varphi))=\mathcal{E}(\Phi-E) $.
\end{proof}

In the next result, we establish a connection between the gain energy and the matching number of a graph. This result is a counter part (for the $\mathbb{T}$-gain graphs) of  Lemma \ref{lm.14} and Theorem \ref{lm.16} for undirected graph, a main result in \cite{Tian-Wong} for skew energy of oriented graph, and Theorem \ref{lm.18} for mixed graph.

\begin{theorem}\label{th4}
Let $ \Phi=(G, \varphi) $ be a $ \mathbb{T} $-gain graph, and let $ \mu(G) $ be the  matching number of $G$. Then $ \mathcal{E}(\Phi) \geq 2\mu(G) $.
\end{theorem}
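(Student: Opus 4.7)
The plan is to apply the von Neumann trace inequality (Theorem~\ref{th1}) to $A(\Phi)$ together with an auxiliary Hermitian matrix built from a maximum matching. Because $\mathbb{T}$-gains have unit modulus, the matrix supported only on the matching edges will have its non-zero singular values all equal to $1$, and its trace--product with $A(\Phi)$ will be forced to equal the integer $2\mu(G)$; the bound will fall out at once.

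Let $M=\{e_1,\dots,e_k\}$ with $e_i=u_iv_i$ be a maximum matching of $G$, so $k=\mu(G)$, and write $\varphi_i:=\varphi(\overrightarrow{e_{u_i,v_i}})\in\mathbb{T}$. I would define the \emph{matching gain matrix} $B$ to be the $n\times n$ Hermitian matrix whose only non-zero entries are $B_{u_i,v_i}=\varphi_i$ and $B_{v_i,u_i}=\overline{\varphi_i}$ for $i=1,\dots,k$. After reordering the vertices so that the matched pairs come first, $B$ is block diagonal with $k$ blocks of the form $\begin{pmatrix}0 & \varphi_i \\ \overline{\varphi_i} & 0\end{pmatrix}$ (each with eigenvalues $\pm 1$) together with an $(n-2k)$-dimensional zero block. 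Consequently, the singular values of $B$ in decreasing order are $\sigma_1(B)=\cdots=\sigma_{2k}(B)=1$ and $\sigma_{2k+1}(B)=\cdots=\sigma_n(B)=0$.

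Next I would compute $\tr(B\,A(\Phi))$ directly. For a matched vertex $u_i$, the only non-zero summand in $(BA(\Phi))_{u_i,u_i}$ comes from the index $v_i$, giving $B_{u_i,v_i}A(\Phi)_{v_i,u_i}=\varphi_i\,\overline{\varphi_i}=1$; the same holds for $v_i$, while the diagonal entry of $BA(\Phi)$ vanishes at every unmatched vertex. Hence $\tr(B\,A(\Phi))=2k$, which is real. Applying Theorem~\ref{th1} with the singular values of $B$ identified above then yields
\begin{equation*}
2\mu(G)=\Re\bigl(\tr(B\,A(\Phi))\bigr)\le \sum_{j=1}^{n}\sigma_j(A(\Phi))\,\sigma_j(B)=\sum_{j=1}^{2k}\sigma_j(A(\Phi))\le \mathcal{E}(\Phi),
\end{equation*}
which is the desired inequality.

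I do not anticipate a serious obstacle. The step that requires mild care is verifying that Theorem~\ref{th1} is being used with \emph{singular values} (rather than eigenvalues); this is precisely how it is stated in the preliminaries and is the right tool because $A(\Phi)$ generally has negative eigenvalues. The critical design choices, namely that the block structure of $B$ forces $2k$ of its singular values to equal $1$ irrespective of the individual gains, and that the unit-modulus relation $\varphi_i\overline{\varphi_i}=1$ pins the trace $\tr(BA(\Phi))$ to the integer $2\mu(G)$, are both dictated purely by the $\mathbb{T}$-gain structure. An alternative route would be to first invoke Theorem~\ref{lm12} to replace $\Phi$ by its induced subgraph on the matched vertices (which has a perfect matching) and then run the trace argument there, but this seems to offer no real simplification.
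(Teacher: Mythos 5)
Your proof is correct, and it takes a genuinely different route from the paper. The paper proves Theorem~\ref{th4} by induction on $\mu(G)$: it removes a matched edge $e$ together with its two endpoints, observes that the edges incident with $e$ form a cut set, and invokes Lemma~\ref{lm6} (which rests on the Day--So block-matrix inequality, Theorem~\ref{lm11}) to get $\mathcal{E}(\Phi)\geq \mathcal{E}((G-[e],\varphi))+\mathcal{E}((K_2,\varphi))\geq 2(\mu(G)-1)+2$. Your argument instead applies von Neumann's trace inequality (Theorem~\ref{th1}) once, to $A(\Phi)$ and a Hermitian ``matching gain matrix'' $B$ supported on the matching edges; the computations you flag as critical are indeed the whole point and both check out: the blocks $\left(\begin{smallmatrix}0 & \varphi_i\\ \overline{\varphi_i} & 0\end{smallmatrix}\right)$ give $B$ exactly $2k$ unit singular values, and $\varphi(\overrightarrow{e_{u_i,v_i}})\varphi(\overrightarrow{e_{v_i,u_i}})=\varphi_i\overline{\varphi_i}=1$ pins $\tr(BA(\Phi))$ to $2\mu(G)$. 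Your approach is shorter, non-inductive, and makes transparent exactly where the unit-modulus hypothesis enters; it is also in the spirit of the paper, which uses Theorem~\ref{th1} in the same way for Theorems~\ref{th-4.1} and~\ref{lm2} (there with $A(K_n)$ and $J$ in place of your $B$). What the paper's longer route buys is reusable machinery: the cut-set lemmas (Lemma~\ref{lm6}, Lemma~\ref{lm13}) and the inductive decomposition are exactly what drive the subsequent strict inequalities (Lemmas~\ref{lm5}, \ref{lm7}, \ref{lm8}) and the characterization of equality in Theorems~\ref{th6} and~\ref{th4(ii)}, whereas extracting the equality case from the von Neumann argument would require analyzing when the trace inequality is tight, which is considerably less convenient.
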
	

\begin{proof}
Let $ \Phi=(G, \varphi) $ be any $ \mathbb{T} $-gain graph with matching number $ \mu(G) $. We prove the result by induction on $ \mu(G) $. If $ \mu(G)=0 $, then $ \mathcal{E}(\Phi)=2\mu(G)=0 $. If $ \mu(G)=1 $, then $ G $ must be  $ K_{1, p} $, for some $p$, together with some isolated vertices. Therefore, $ \Phi \sim (G, 1) $. Thus $ \mathcal{E}(\Phi)=2\sqrt{p}\geq 2=2\mu(G) $. Let us assume that for any $ \mathbb{T} $-gain graph $ \Psi=(H, \psi) $ with matching number $ \mu(H)<\mu(G) $, $ \mathcal{E}(\Psi)\geq 2\mu(H) $. Let $ M $ be a maximum matching of $ G $ and $ e\in M $. Now consider an induced subgraph $ G-[e] $. Then $ \mu(G-[e])= \mu(G)-1$. By induction, we have $ \mathcal{E}((G-[e], \varphi))\geq 2\mu(G-[e]) $. Let  $ E $ be the set of edges in $ G $  which are incident with the edge $e$. Then $E$ is a cut set, and $ (G-E)=(G-[e])\oplus K_2 $.  By the Lemma \ref{lm6}, $\mathcal{E}(\Phi)\geq  \mathcal{E}(\Phi-E)$. Now $ \mathcal{E}(\Phi)\geq \mathcal{E}(\Phi-E)=\mathcal{E}((G-[e], \varphi))+\mathcal{E}((K_2, \varphi)) \geq 2\mu(G)-2+2=2\mu(G)$. Hence the result.
\end{proof}

We shall discuss the sharpness of the inequality in the above bound  in Theorem \ref{th4(ii)}.

The following lemma is the counter part of Lemma \ref{lm.13} for the $\mathbb{T}$-gain  graphs.
\begin{lemma} \label{lm13}
Let $ \Phi=(G, \varphi) $ be a $ \mathbb{T} $-gain graph. If $ E $ is a cut set in $ G $ such that $V(G) = V_1\cup V_2$ and all the edges of $E$ are from the vertices of $V_1$ to a fixed vertex of $V_2$, then $ \mathcal{E}(\Phi-E)< \mathcal{E}(\Phi) $.
\end{lemma}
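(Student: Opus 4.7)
The plan is to apply Theorem~\ref{lm11} with the block partition of $A(\Phi)$ corresponding to $V(G)=V_1\cup V_2$ and then rule out equality. Permuting rows and columns so that the vertices of $V_1$ come first, one writes
\[A(\Phi)=\begin{pmatrix} A(V_1,\varphi) & X \\ X^{*} & A(V_2,\varphi) \end{pmatrix},\qquad A(\Phi-E)=\begin{pmatrix} A(V_1,\varphi) & 0 \\ 0 & A(V_2,\varphi) \end{pmatrix}.\]
Because every edge of $E$ is incident to the single vertex $v\in V_2$, the off-diagonal block is the rank-one matrix $X=y\,e_v^{T}$, where $y\in\mathbb{C}^{|V_1|}$ collects the cut-edge gains and $e_v$ is the standard basis vector at $v$. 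Theorem~\ref{lm11} then immediately yields the weak inequality $\mathcal{E}(\Phi)\ge \mathcal{E}(V_1,\varphi)+\mathcal{E}(V_2,\varphi)=\mathcal{E}(\Phi-E)$.

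Suppose for contradiction that equality holds. By the equality clause of Theorem~\ref{lm11} there are unitaries $U,V$ making the block matrix $B=\begin{pmatrix} UA(V_1,\varphi) & UX \\ VX^{*} & VA(V_2,\varphi) \end{pmatrix}$ positive semidefinite. Hermiticity of $B$ reads $UX=XV^{*}$, i.e.\ $(Uy)\,e_v^{T}=y\,(\overline{Ve_v})^{T}$. Since $E\ne\emptyset$ we have $y\ne 0$, so both sides are nonzero rank-one matrices; matching factorisations yields a unimodular scalar $c$ with $Uy=cy$ and $Ve_v=\bar c\,e_v$. Taking adjoints gives $e_v^{*}V=\bar c\,e_v^{*}$, and hence
\[(VA(V_2,\varphi))_{vv}=\bar c\,(A(V_2,\varphi))_{vv}=0.\]
As $VA(V_2,\varphi)$ is Hermitian and positive semidefinite, a zero diagonal entry annihilates the whole $v$-th row; combined with $|c|=1$ this forces $(A(V_2,\varphi))_{v,j}=0$ for every $j$. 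In other words, $v$ has no neighbour in $V_2\setminus\{v\}$.

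Now in $\Phi-E$ the vertex $v$ has no neighbour in $V_1$ by construction, and by the step above no neighbour in $V_2$ either, so $v$ is isolated in $\Phi-E$ and $\mathcal{E}(\Phi-E)=\mathcal{E}(\Phi-v)$. On the other hand, $E\ne\emptyset$ forces $v$ to have a neighbour in $V_1$ inside $\Phi$, so the $v$-th row and column of $A(\Phi)$ are nonzero; applying Lemma~\ref{lm12} to the principal submatrix $A(\Phi-v)$ of $A(\Phi)$ then gives $\mathcal{E}(\Phi)>\mathcal{E}(\Phi-v)=\mathcal{E}(\Phi-E)$, contradicting the standing equality. The main obstacle will be the equality analysis of Theorem~\ref{lm11}: one has to exploit the very special rank-one, single-column structure of $X$ to force $e_v$ to be an eigenvector of $V$, and then combine this with the standard PSD fact that a zero diagonal entry kills the whole row and column, thereby collapsing the situation to one where Lemma~\ref{lm12} cleanly supplies the contradiction.
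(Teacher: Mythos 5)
Your argument is correct and follows essentially the same route as the paper's: both write $A(\Phi)$ in block form with the rank‑one off‑diagonal block $X=y\,e_v^{T}$, invoke the equality case of Theorem~\ref{lm11}, use Hermiticity of the resulting PSD matrix to force $e_v$ to be an eigenvector of the unitary acting on the $V_2$‑block, use the zero diagonal of a PSD matrix to conclude $v$ has no neighbour inside $V_2$, and finish with Lemma~\ref{lm12}. The only (cosmetic) difference is in the last step, where the paper applies Lemma~\ref{lm12} to deduce $y=0$ and hence $E=\emptyset$, while you apply it to $A(\Phi)$ versus $A(\Phi-v)$ to contradict the assumed equality directly.
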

\begin{proof}
Let $ E $ be a cut set, and $ L $ and $ M $ be two complementary induced subgraphs in $ G $ corresponding to $E$. Let us assume that the edges of $ E $ are incidence with a single vertex  $ v $ of $ M $. After a suitable relabeling of vertices, we can express $ A(\Phi)=\left[ \begin{array}{cc}
A((L,\varphi)) & X\\ X^{*} & A((M, \varphi))\end{array} \right] $ such that the first column of the matrix $ X $, say $ y $, corresponds to the vertex $ v $. Hence all the entries of the matrix $ X $ are zero, except the first column. Now, by Lemma \ref{lm6}, $ \mathcal{E}(\Phi-E)\leq \mathcal{E}(\Phi) $. Suppose that $ \mathcal{E}(\Phi-E)=\mathcal{E}(\Phi) $. Then, by Theorem \ref{lm11}, there exists two unitary matrices $ P $ and $ Q $, such that $ \left[ \begin{array}{cc}
PA((L, \varphi)) & PX\\ QX^{*} & QA((M,\varphi))\end{array} \right]  $  is positive semi definite. As $(PX)^{*}=QX^{*}  $, we have $ Q=\left[ \begin{array}{cc}
\beta & 0\\0& Q_{1}\end{array} \right] $ with $ |\beta|=1 $, and $ Q_1 $ is unitary matrix. Let $ A((M, \varphi))=\left[ \begin{array}{cc}
0 & z^{*}\\ z & N\end{array} \right] $. Then $ QA((M,\varphi))=\left[ \begin{array}{cc}
0 & \beta z^{*}\\ Q_1z & Q_1N\end{array} \right] $ is positive semi definite. So  $ Q_1z=0 $ and  $ \beta z^{*}=0$. That is $ z=0 $. Hence $ A((M, \varphi))=\left[ \begin{array}{cc}
0 &0\\ 0 & N\end{array} \right] $. Therefore $ A(\Phi)=\left[ \begin{array}{ccc}
A((L,\varphi)) & y & 0\\y^{*}& 0 & 0\\ 0 & 0& N\end{array} \right] $. Now $ \mathcal{E}(A((L, \varphi)))+\mathcal{E}(N)=\mathcal{E}(A((L, \varphi)))+\mathcal{E}(A(M, \varphi)))=\mathcal{E}(\Phi-E) =\mathcal{E}(\Phi)=\mathcal{E}\left( \left[ \begin{array}{cc}
A((L,\varphi)) & y\\ y^{*} & 0\end{array} \right]\right)+\mathcal{E}(N)$. That is $ \mathcal{E}\left( \left[ \begin{array}{cc}
A((L,\varphi)) & y\\ y^{*}& 0\end{array} \right]\right) =\mathcal{E}(A((L, \varphi))) $. Hence, by Lemma \ref{lm12}, $ y=0 $. Thus $ E $ is empty. Which is a contradiction.

\end{proof}

The following lemma provides a (spectral) sufficient condition for a graph to have perfect matching.   This  is  a counter part of Lemma \ref{lm14} for the $
\mathbb{T}$-gain graphs.
\begin{lemma} \label{lm4}
 If $ \Phi=(G, \varphi) $ is a connected $ \mathbb{T} $-gain graph and  $ \mathcal{E}(\Phi)=2\mu(G) $, then $ G $ has a perfect matching
\end{lemma}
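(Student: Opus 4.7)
The plan is to argue by contradiction using Lemma \ref{lm13} (which gives strict energy decrease under a specific kind of cut) together with the matching-number lower bound of Theorem \ref{th4}. Assume $G$ has at least two vertices (otherwise $\mu(G)=0$ and the statement is vacuous or handled separately), and suppose to the contrary that $G$ admits no perfect matching. Let $M$ be a maximum matching of $G$; then there is a vertex $v$ not saturated by $M$.

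Next, I would set up the cut. Since $G$ is connected on at least two vertices, $v$ has at least one neighbor, so the set $E$ of all edges of $G$ incident with $v$ is nonempty. Taking the bipartition $V_1=V(G)\setminus\{v\}$ and $V_2=\{v\}$, every edge of $E$ joins some vertex of $V_1$ to the fixed vertex $v\in V_2$, so Lemma \ref{lm13} applies and yields the strict inequality
\[
\mathcal{E}(\Phi-E) < \mathcal{E}(\Phi) = 2\mu(G).
\]
On the other hand, $\Phi-E$ is the disjoint union of the isolated vertex $v$ and the induced $\mathbb{T}$-gain subgraph $\Phi':=(G-v,\varphi)$, so $\mathcal{E}(\Phi-E)=\mathcal{E}(\Phi')$.

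Finally I would bound $\mathcal{E}(\Phi')$ from below using Theorem \ref{th4}. Because $v$ is unsaturated by $M$, the matching $M$ lies entirely in $G-v$, hence $\mu(G-v)\geq |M|=\mu(G)$; the reverse inequality is automatic, so $\mu(G-v)=\mu(G)$. Theorem \ref{th4} then gives
\[
\mathcal{E}(\Phi') \geq 2\mu(G-v) = 2\mu(G).
\]
Combining this with the previous display yields $2\mu(G)\leq \mathcal{E}(\Phi')=\mathcal{E}(\Phi-E) < 2\mu(G)$, a contradiction. Hence $G$ must have a perfect matching.

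The main delicate point is ensuring Lemma \ref{lm13} is applicable, which only requires the cut $E$ to be nonempty; this is where the connectivity hypothesis is used. The rest is a clean combination of the two ingredients already proved in this section.
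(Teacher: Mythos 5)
Your proof is correct and is essentially identical to the paper's: both arguments take an unsaturated vertex $v$ of a maximum matching, cut off all edges incident with $v$ via Lemma \ref{lm13} to get the strict inequality $\mathcal{E}(\Phi)>\mathcal{E}(\Phi-v)$, and then apply Theorem \ref{th4} with $\mu(G-v)=\mu(G)$ to reach the contradiction $\mathcal{E}(\Phi)>2\mu(G)$. Your explicit remark that connectivity is only needed to guarantee the cut set is nonempty is a nice clarification of a point the paper leaves implicit.
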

\begin{proof}
Suppose that  $ G $ has no perfect matching. Let $ M $ be any maximum matching of $ G $. Since $ G $ is a connected graph, so there exist a vertex $ u $ which is not adjacent with any edges in $ M $. Then $ \mu(G)=\mu (G-u) $. Let $ K_1 $ be the graph which is an isolated vertex $ u $. Let $ E $ be the set of all edges incident with the vertex $u$ in $G$. Then, $E$ is a cut set, and $ \Phi-E=(\Phi-u)\oplus K_1 $. Therefore, by  Lemma \ref{lm13} and  Theorem \ref{th4}, $ \mathcal{E}(\Phi) > \mathcal{E}(\Phi-E) =\mathcal{E}(\Phi-u)+0\geq 2\mu(G-u)=2\mu(G)$. That is, $ \mathcal{E}(\Phi) > 2\mu(G) $, a contradiction. Thus $ G $ has a perfect matching.
\end{proof}
	
Now, let us establish a couple of lemmas about the energy of a $\mathbb{T}$-gain graph in terms of the matching number of the underlying graph.
	
\begin{lemma} \label{lm5}
Let $ \Phi=(G, \varphi) $ be a connected $ \mathbb{T} $-gain graph with a pendant vertex. If $ G $ is not $ K_2 $, then $ \mathcal{E}(\Phi) > 2\mu(G)$.
\end{lemma}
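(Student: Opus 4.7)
The plan is to exploit the pendant vertex together with the cut-set estimate from Lemma \ref{lm13} to sharpen the weak inequality of Theorem \ref{th4}. Let $v$ be the pendant vertex of $G$, and let $u$ denote its unique neighbor. Since $G$ is connected and $G\neq K_2$, the degree of $u$ is at least $2$, so there exists at least one edge incident to $u$ other than $uv$.

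First I would take $E$ to be the set of all edges of $G$ incident with $u$ except the edge $uv$, and set $V_1=V(G)\setminus\{u,v\}$, $V_2=\{u,v\}$. Because $v$ is a pendant, every edge between $V_1$ and $V_2$ is incident with the single vertex $u\in V_2$, so $E$ is a nonempty cut set whose edges all go to the fixed vertex $u$. Lemma \ref{lm13} then yields
\begin{equation*}
\mathcal{E}(\Phi-E)<\mathcal{E}(\Phi).
\end{equation*}
Next I would observe that $\Phi-E=(K_2,\varphi)\oplus (G-\{u,v\},\varphi)$, where the $K_2$ piece is the edge $uv$. Since the energy of a disconnected gain graph is the sum of the energies of its components, and $\mathcal{E}((K_2,\varphi))=2$, this gives
\begin{equation*}
\mathcal{E}(\Phi-E)=2+\mathcal{E}((G-\{u,v\},\varphi)).
\end{equation*}

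Applying Theorem \ref{th4} to the gain graph $(G-\{u,v\},\varphi)$ produces $\mathcal{E}((G-\{u,v\},\varphi))\geq 2\mu(G-\{u,v\})$. The remaining combinatorial input I would use is the standard fact that a graph with a pendant vertex admits a maximum matching containing the pendant edge: if a maximum matching $M$ of $G$ does not use $uv$, then $v$ is unsaturated (as it has only one neighbor) and we may swap the edge at $u$ (if any) for $uv$ to obtain a maximum matching containing $uv$. Consequently $\mu(G-\{u,v\})=\mu(G)-1$.

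Putting the pieces together,
\begin{equation*}
\mathcal{E}(\Phi)>\mathcal{E}(\Phi-E)=2+\mathcal{E}((G-\{u,v\},\varphi))\geq 2+2(\mu(G)-1)=2\mu(G),
\end{equation*}
which is the desired strict inequality. The only non-trivial step is recognizing that the cut set around $u$ meets the one-sided incidence hypothesis of Lemma \ref{lm13}; this is precisely where the pendant vertex is essential, and where the condition $G\neq K_2$ is needed to ensure $E$ is non-empty so that Lemma \ref{lm13} yields a strict drop in energy.
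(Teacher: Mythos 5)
Your proof is correct and takes essentially the same route as the paper's: both delete the cut set of edges joining $u$ to $V(G)\setminus\{u,v\}$ so that the pendant edge splits off as a $K_2$ component, invoke Lemma \ref{lm13} for the strict energy drop, and finish with Theorem \ref{th4} applied to $(G-\{u,v\},\varphi)$ together with $\mu(G-\{u,v\})=\mu(G)-1$. The only difference is that you justify the matching identity $\mu(G-\{u,v\})=\mu(G)-1$ and the non-emptiness of the cut set explicitly, both of which the paper leaves implicit.
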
	
\begin{proof}
Let $ v $ be a pendent vertex of $ G $, $ u $ be its unique neighbor vertex, and $e$ be the edge between them.   Then the induced subgraphs $ (G-[e]) $ and $K_2  $ are complement to each other in $ G$.  Let $E$ be the collection of all edges between the vertex $u$ and the vertices of $G-\{u,v\}$ . Then  $ G-E=(G-[e] ) \oplus K_2 $.   By Lemma \ref{lm13}, $  \mathcal{E}(\Phi) > \mathcal{E}(\Phi-E)=\mathcal{E}(( G-[e], \varphi))+\mathcal{E}((K_2, \varphi))$. Also $ \mu(G-[e])=\mu(G)-1 $. Therefore, by Theorem \ref{th4}, $ \mathcal{E}(\Phi)> 2\mu(G).$
\end{proof}

\begin{lemma} \label{lm7}
Let $ \Phi=(G, \varphi) $ be a connected $ \mathbb{T} $-gain graph and $ L $ be an induced subgraph of $ G $. If $ \mathcal{E}((L, \varphi)) >2\mu(L) $ and $ \mu(G)=\mu(L)+\mu(G-L) $. Then $ \mathcal{E}(\Phi)> 2\mu(G) $.
\end{lemma}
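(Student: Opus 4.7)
The plan is to exploit the decomposition of $\Phi$ across the boundary between $L$ and its complementary induced subgraph, combined with the strict inequality supplied for $(L,\varphi)$ and the general lower bound from Theorem \ref{th4}. First I would let $E$ be the cut set of $G$ consisting of all edges with one endpoint in $V(L)$ and the other in $V(G-L)$, so that by construction $\Phi - E = (L,\varphi) \oplus (G-L,\varphi)$; the adjacency matrix is then block diagonal and its energy splits as $\mathcal{E}(\Phi-E) = \mathcal{E}((L,\varphi)) + \mathcal{E}((G-L,\varphi))$. Applying Lemma \ref{lm6} (with this $E$) then gives the basic estimate
\[
\mathcal{E}(\Phi) \;\geq\; \mathcal{E}(\Phi-E) \;=\; \mathcal{E}((L,\varphi)) + \mathcal{E}((G-L,\varphi)).
\]

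To finish, I would invoke the two assumptions on the right-hand side: the strict hypothesis $\mathcal{E}((L,\varphi)) > 2\mu(L)$, and the general bound $\mathcal{E}((G-L,\varphi)) \geq 2\mu(G-L)$ obtained by applying Theorem \ref{th4} to the $\mathbb{T}$-gain graph $(G-L,\varphi)$. Adding these, and using the identity $\mu(G) = \mu(L) + \mu(G-L)$ that is part of the hypothesis, yields
\[
\mathcal{E}(\Phi) \;>\; 2\mu(L) + 2\mu(G-L) \;=\; 2\mu(G),
\]
which is the desired conclusion.

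There is no genuine obstacle here: the argument is essentially additivity of energy under removal of a cut (Lemma \ref{lm6}) combined with one strict and one weak inequality, and strictness of the final bound is inherited directly from the strict hypothesis on $(L,\varphi)$. The only minor checks are the degenerate situations: if $L = G$ then $E = \emptyset$ and the conclusion reduces to the hypothesis itself, and if there are no edges at all between $V(L)$ and $V(G-L)$ then the cut set is empty and Lemma \ref{lm6} applies trivially (in fact one has equality $\mathcal{E}(\Phi) = \mathcal{E}((L,\varphi)) + \mathcal{E}((G-L,\varphi))$ in that case), so the chain of inequalities above still goes through.
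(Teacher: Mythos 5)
Your proposal is correct and follows exactly the paper's own argument: remove the cut set $E$ between $L$ and $G-L$, apply Lemma \ref{lm6} to get $\mathcal{E}(\Phi)\geq \mathcal{E}((L,\varphi))+\mathcal{E}((G-L,\varphi))$, then combine the strict hypothesis on $(L,\varphi)$ with Theorem \ref{th4} applied to $(G-L,\varphi)$ and the matching identity. The extra remarks on degenerate cases are harmless and not needed.
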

\begin{proof}
Since $ L $ is an induced subgraph of $ G $, so $ (G-L) $ is the complementary induced subgraph of $ G $. Let $ E $ be a cut set of $ G $ such that $ (G-E)=(G-L)\oplus L $. Then $ (\Phi-E)=(G-L, \varphi)\oplus (L, \varphi) $. Since $ E $ is a cut set of $ G $, so, by  Lemma \ref{lm6}, $ \mathcal{E}(\Phi)\geq \mathcal{E}(\Phi-E)=\mathcal{E}((L, \varphi))+\mathcal{E}((G-L, \varphi)).$ Now, by  Theorem \ref{th4} and the hypothesis, we have $\mathcal{E}(\Phi) > 2\mu(L)+2\mu(G-L)=2\mu(G)$. Hence, $ \mathcal{E}(\Phi)> 2\mu(G).$
\end{proof}

\begin{lemma} \label{lm8}
Let $ \Phi=(G, \varphi) $ be any $ \mathbb{T} $-gain graph on a connected graph $ G $ which is given in the figure  \ref{fig2}. Then $ \mathcal{E}(\Phi)> 2\mu(G) $.
\end{lemma}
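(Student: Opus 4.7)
The plan is to build the bound by locating a well-chosen induced subgraph $L$ of $G$ for which strict inequality already holds, and then splitting $G$ along a cut so that the matching number adds up exactly. Since Lemma \ref{lm5} already settles the case of a pendant vertex and Lemma \ref{lm7} reduces the problem to finding such an $L$ with $\mathcal{E}((L,\varphi))>2\mu(L)$ together with $\mu(G)=\mu(L)+\mu(G-L)$, the graph displayed in Figure \ref{fig2} (presumably a small non-bipartite configuration not covered by the previous lemmas, containing an odd cycle or two cycles sharing a vertex) should decompose in exactly this way.

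First, I would examine the structure of $G$ and single out either an odd cycle or a non-bipartite induced subgraph $L$. For an odd cycle $C_{2k+1}$, the eigenvalues of $(C_{2k+1},\varphi)$ can be computed via Theorem \ref{th-2.3}: writing $\varphi(\overrightarrow{C_{2k+1}})=e^{i\theta}$, the spectrum has no zero eigenvalue (since the parity forces $\cos((\theta+2\pi j)/(2k+1))\neq 0$ in all cases where it would need to vanish), so a direct computation gives $\mathcal{E}((C_{2k+1},\varphi))>2k=2\mu(C_{2k+1})$. Alternatively, if $L$ is a more general non-bipartite induced subgraph, Lemma \ref{lm.11} (or its gain-graph analogue derived from the already established Theorem \ref{th4}) provides the strict bound.

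Second, I would verify matching additivity. Choose the cut set $E$ of all edges between $V(L)$ and $V(G-L)$. The decomposition $G-E=L\oplus(G-L)$ yields $\mu(G-E)=\mu(L)+\mu(G-L)$, and for the specific graph of Figure \ref{fig2} the structure guarantees that no maximum matching of $G$ can use a cut edge more profitably than staying inside the two pieces, so $\mu(G)=\mu(L)+\mu(G-L)$ holds. When the cut set happens to be incident to a single vertex of $G-L$, I would use Lemma \ref{lm13} to get the strict inequality $\mathcal{E}(\Phi)>\mathcal{E}(\Phi-E)$ for free; otherwise, the strict inequality is transferred from the piece via Lemma \ref{lm7}.

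Third, combining the pieces: Lemma \ref{lm6} (or Lemma \ref{lm13}) gives $\mathcal{E}(\Phi)\geq\mathcal{E}((L,\varphi))+\mathcal{E}((G-L,\varphi))$, Theorem \ref{th4} gives $\mathcal{E}((G-L,\varphi))\geq 2\mu(G-L)$, and the strict bound on $L$ produces $\mathcal{E}(\Phi)>2\mu(L)+2\mu(G-L)=2\mu(G)$. The main obstacle will be the second step, namely certifying matching additivity for the specific configuration in Figure \ref{fig2}: if $G$ contains cycles sharing vertices or short cycles meeting a matching, a naive cut may lose a matching edge, so the cut must be chosen to respect a fixed maximum matching. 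Once the cut is chosen correctly, the rest is a direct invocation of the machinery already assembled.
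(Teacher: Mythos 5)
Your high-level skeleton --- split $G$ along a cut set, obtain a strict inequality on one piece, and add up matching numbers --- is exactly the shape of the paper's argument (Lemma \ref{lm6}/Lemma \ref{lm7} plus Theorem \ref{th4}), but your concrete instantiation fails. The graph of Figure \ref{fig2} is the six-vertex configuration that arises inside the proof of Theorem \ref{th6} as an induced subgraph of a connected \emph{bipartite} graph, so it contains no odd cycle and no non-bipartite induced subgraph; your first step, ``single out either an odd cycle or a non-bipartite induced subgraph $L$,'' cannot be carried out, and Theorem \ref{th-2.3} for odd gain cycles has nothing to act on. Moreover, the fallback you suggest --- Lemma \ref{lm.11} or ``its gain-graph analogue'' --- is dangerous here: the gain analogue is Lemma \ref{lm15}, which the paper proves \emph{later} using Theorem \ref{th6}, whose proof is precisely where Lemma \ref{lm8} is invoked, so that route is circular. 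Finally, you flag matching additivity as the main obstacle, but for this graph it is the trivial part; the genuinely load-bearing step is identifying which subgraph supplies the strictness, and that is the step your proposal leaves open.

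The missing idea is that the strictness comes from a $P_4$, not from an odd cycle or from a pendant vertex of $G$ itself. The paper lets $E$ be the set of edges incident with the distinguished edge $e$ of the figure, so that $G-E=K_2\oplus P_4$ with $\mu(K_2)+\mu(P_4)=1+2=3=\mu(G)$. Lemma \ref{lm6} gives $\mathcal{E}(\Phi)\geq \mathcal{E}((K_2,\varphi))+\mathcal{E}((P_4,\varphi))$; since any gain on a tree is balanced, $(P_4,\varphi)\sim(P_4,1)$, and the pendant-vertex Lemma \ref{lm5} applied to this $P_4$ piece (which is not $K_2$) yields $\mathcal{E}((P_4,\varphi))>2\mu(P_4)=4$. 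Hence $\mathcal{E}(\Phi)>2+4=2\mu(G)$. So your framework would work if you chose $L=P_4$ and drew the strict inequality from Lemma \ref{lm5} applied to $L$ rather than to $G$; as written, with $L$ required to be non-bipartite, the proof does not go through.
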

\begin{figure} [!htb]
\begin{center}
\includegraphics[scale= 0.40]{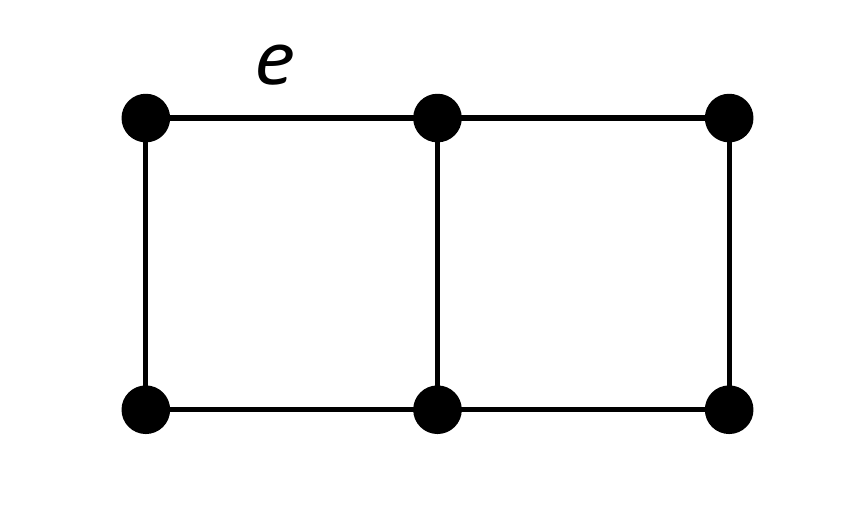}
\caption{  Graph $ G $} \label{fig2}
\end{center}
\end{figure}
\begin{proof}
Let  $ E $ be the cut set consist of the set of edges which are incidence with the edge $ e $ in Figure \ref{fig2}. Then $ G-E= K_2 \oplus P_4 $. Now, by  Lemma \ref{lm6}, $ \mathcal{E}(\Phi) \geq \mathcal{E}(\Phi-E)= \mathcal{E}(( K_2, \varphi))+\mathcal{E}((P_4, \varphi)) $. Since $ ( K_2, \varphi) \sim ( K_2, 1)$ and $(P_4, \varphi) \sim (P_4,1)  $, so by the Lemma \ref{lm5}, we have $ \mathcal{E}(\Phi) > 2+2\mu(P_4)=2\mu(G) $.
\end{proof}

	In the next theorem, we characterize the class of bipartite $ \mathbb{T} $-graphs for which equality holds in Theorem \ref{th4}.
Define  $ N(u)=\{x\in V(G): u\sim x\} $.	
\begin{theorem}\label{th6}
Let $ \Phi=(G, \varphi) $ be any connected  $ \mathbb{T} $-gain bipartite graph with $ n $ vertices. Then $ \mathcal{E}(\Phi)= 2\mu(G) $ if and only if  $ \Phi \sim (K_{\frac{n}{2}, \frac{n}{2}}, 1).$
\end{theorem}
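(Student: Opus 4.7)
The ``if'' direction is immediate: for $\Phi \sim (K_{n/2,n/2},1)$, the spectrum $\{n/2,-n/2,0^{(n-2)}\}$ gives $\mathcal{E}(\Phi)=n$, which equals $2\mu(G)$ since $\mu(K_{n/2,n/2})=n/2$ and energy is a switching invariant.

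For the ``only if'' direction, I assume $\mathcal{E}(\Phi)=2\mu(G)$. The plan is to reduce the problem, via a sequence of structural exclusions, to the case where the underlying graph is $K_{n/2,n/2}$, after which Theorem~\ref{th5} immediately forces $\Phi\sim(K_{n/2,n/2},1)$. First I invoke Lemma~\ref{lm4} on the connected $\mathbb{T}$-gain graph $\Phi$ to obtain a perfect matching of $G$; consequently $n$ is even, the bipartition classes $X,Y$ satisfy $|X|=|Y|=n/2$, and $\mu(G)=n/2$, so $\mathcal{E}(\Phi)=n$. Moreover, Lemma~\ref{lm5} rules out any pendant vertex (assuming $G\neq K_2$, since the case $G=K_2=K_{1,1}$ is trivial), so the minimum degree of $G$ is at least~$2$.

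The main step is to show $G=K_{n/2,n/2}$, which I approach by contradiction. Suppose there exist $u\in X$ and $v\in Y$ with $u\not\sim v$; connectedness and bipartite parity force a shortest $u$--$v$ path of the form $u-x-y-v$, and minimality of the path rules out chords, so $\{u,x,y,v\}$ induces a $P_4$ in $G$. The strategy is to select an induced subgraph $L$ containing this $P_4$ such that (a)~$\mathcal{E}((L,\varphi))>2\mu(L)$ and (b)~$\mu(G)=\mu(L)+\mu(G-L)$; Lemma~\ref{lm7} would then yield the contradiction $\mathcal{E}(\Phi)>2\mu(G)$. The easy case is when some perfect matching $M$ of $G$ contains both end-edges $ux$ and $yv$: taking $L=P_4$, one has $(P_4,\varphi)\sim P_4$ (since $P_4$ is a tree) with $\mathcal{E}(P_4)=2\sqrt{5}>4=2\mu(P_4)$, while $M\setminus\{ux,yv\}$ is a perfect matching of $G-L$, giving $\mu(G)=2+\mu(G-L)$.

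The principal obstacle is the remaining case in which no perfect matching of $G$ contains both $ux$ and $yv$. Here I would enlarge $L$ by adjoining, for each $P_4$-vertex not matched inside the $P_4$, its partner under a fixed perfect matching $M$; this produces a connected six-vertex induced subgraph of $G$ containing the configuration of Figure~\ref{fig2}. Lemma~\ref{lm8} then supplies $\mathcal{E}((L,\varphi))>2\mu(L)$, while $M$ still witnesses $\mu(G)=\mu(L)+\mu(G-L)$, and Lemma~\ref{lm7} again produces the desired contradiction. Once $G=K_{n/2,n/2}$ is established, Theorem~\ref{th5} closes the argument by forcing $\Phi\sim(K_{n/2,n/2},1)$.
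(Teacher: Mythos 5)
Your ``if'' direction and the general set-up (Lemma \ref{lm4} for the perfect matching, Lemma \ref{lm5} against pendant vertices, reduction to showing $G=K_{\frac n2,\frac n2}$ followed by Theorem \ref{th5}) are fine, and your Case~1 --- an induced $P_4$ whose two end-edges lie in a common perfect matching, handled by $\mathcal{E}(P_4)=2\sqrt5>4$ together with Lemma \ref{lm7} --- is correct and is essentially what the paper does when its four chosen vertices happen to induce a $P_4$. (A small repair: a shortest $u$--$v$ path may have length greater than $3$, but its first four vertices still induce a $P_4$, so this is cosmetic.)

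The genuine gap is in your Case~2. Adjoining to the $P_4$ the $M$-partners of its internally unmatched vertices does not ``produce a connected six-vertex induced subgraph of the form in Figure \ref{fig2}.'' First, the count is wrong: if none of the three edges of the $P_4$ lies in $M$ you must adjoin four partners and obtain an eight-vertex subgraph. Second, and more seriously, even when exactly two partners are adjoined you have no control over which edges are present among the six vertices: Lemma \ref{lm8} is stated for one \emph{specific} six-vertex graph (the one whose removal of the edges incident to a distinguished edge $e$ leaves $K_2\oplus P_4$), and nothing in your construction forces the induced subgraph to be isomorphic to it, nor does the lemma apply to graphs merely \emph{containing} that configuration. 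So condition (a) of Lemma \ref{lm7} is not established and the contradiction does not follow. This is exactly the difficulty the paper's proof is built to avoid: it inducts on $n$, chooses the four vertices $u,v,u',v'$ so that $uv,u'v'\in M$ and $u\not\sim v'$, and in the hard case (where these four induce $2K_2$ rather than $P_4$) uses the equality $\mathcal{E}((Q,\varphi))=2\mu(Q)$ for $Q=G-P$ together with the induction hypothesis to conclude that $Q$ is complete bipartite; only with that strong structural information about $Q$ can it pin down enough adjacencies to exhibit the exact Figure \ref{fig2} configuration and invoke Lemma \ref{lm8}. Without some analogue of that inductive control, your Case~2 cannot be closed as written.
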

\begin{proof}

 First let us  show that $ G $ is  complete bipartite using induction on the number of vertices.
Let $ |V(G)|=2 $.  If $ \mathcal{E}(\Phi)=2\mu(G) $, then it is clear that $ G=K_{1,1} $. Let us assume that for any connected bipartite $ \mathbb{T} $-gain graph $ (H, \psi) $ with $|V(H)| <n$, if $ \mathcal{E}((H, \psi))=2\mu(H) $, then $ H $ is a complete bipartite graph with same partition size.    Let  $ \Phi=(G, \varphi) $ be any connected bipartite $ \mathbb{T} $-gain graph with $ n $ vertices such that $ \mathcal{E}(\Phi)= 2\mu(G) $.  By the Lemma \ref{lm4}, $ G $ has perfect matching, $ M $ (say). Let $ X $ and $ Y $ be the vertex partition of $ G $ such that $ |X|=|Y|= \frac{n}{2} $.
\vspace*{3pt}
	
\noindent {\bf Claim 1:} For any vertex  $ u\in X $, $ N(u)=Y $.
	
Suppose that  $ N(u)$ is a proper subset of $ Y $. Let $ v^{'} \in Y \setminus N(u) $. Then there exists vertices  $ u^{'}\in X $ and $ v\in Y $ such that the edges $ (u,v)$ and $ (u^{'},v^{'})$ are in  $M$.
	
Let $ P $ be an induced  subgraph formed by the vertices $ \{ u, v, u^{'}, v^{'} \} $. The vertices  $ u^{'}$ and $ v $ are not adjacent in $G$. Suppose they are adjacent. Then $ P $ is isomorphic to $ P_4 $. If $ |V(G)|=4 $, then, by  Lemma \ref{lm5},  
$\mathcal{E}(\Phi)>2\mu(G)  $, a contradiction. Thus $ G=K_{2,2} $.

If $ |V(G)|>4 $, then it is clear that $ \mu(G)=\mu(P)+\mu(G-P) $. By  Lemma \ref{lm5}, $ \mathcal{E}((P, \varphi)) > 2\mu(P) $. Then, by Lemma \ref{lm7}, $ \mathcal{E}(\Phi) > 2\mu(G) $, which is again a contradiction. Thus  $ u^{'} \nsim v $.
	
Let $ Q=(G-P) $. Then $ Q $ is the complementary induced subgraph of $ P $ in $ G $. Therefore, $ \mu(G)=\mu(P)+\mu(Q) =2+\mu(Q)$. Now, we have $ 2\mu(G)=\mathcal{E}(\Phi)\geq \mathcal{E}((P, \varphi))+\mathcal{E}((Q, \varphi))\geq 2(2+\mu(Q))=2\mu(G) $. Thus, $ \mathcal{E}((Q, \varphi))=2\mu(Q) $. Then, by induction hypothesis, $ Q $ is complete bipartite graph with  partition  $ X^{'} $ and $ Y^{'} $ such that $\vert X^{'} \vert = \vert Y^{'} \vert$.  Then  $ X=X^{'}\cup \{u, u^{'} \} $ and $ Y=Y^{'}\cup \{v, v^{'}\} $.
	
\noindent{\bf sub claim: } For every $ x\in X^{'} $ the vertices  $ x$ and $v $ are adjacent,  and for every $ y \in Y^{'} $ the vertices  $ y$ and $u $ are adjacent.
\begin{figure} [!htb]
\begin{center}
\includegraphics[scale= 0.70]{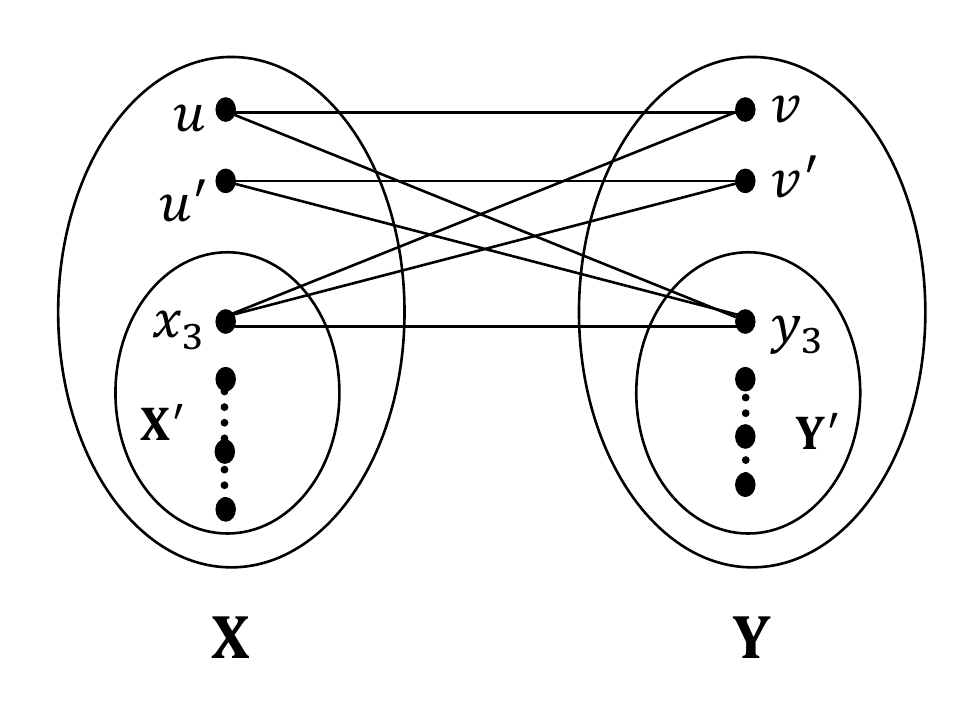}
\caption{  Graph $ G $} \label{fig4}
\end{center}
\end{figure}
	
Since $ G $ is connected, so at least one of the vertices of $u $ or $ v$ is adjacent with the vertices in $ Y^{'} $ or $ X^{'} $, respectively. Without loss of generalities, let us assume that $ u \sim y$ for some $y \in Y^{'} $. Now, every vertex of $ X^{'} $ is adjacent with $ v $. Otherwise, there is a vertex $ x\in X^{'} $ such that $ x \nsim v $. Then the induced underlying subgraph $ H_1 $(say) formed by the vertices $ \{u, v, y,x \} $ is isomorphic to $ P_4 $ and $ \mu(G)=\mu(H_1)+\mu(G-H_1) $. Therefore, by  Lemma \ref{lm7}, $ \mathcal{E}(\Phi) > 2\mu(G) $, a contradiction. Thus every vertex in $ X^{'} $ is adjacent with $ v $.
	
Suppose $ u $ is not adjacent to some of the vertices of $ Y^{'} $. Let $ y_{1}\in Y^{'} $ such that $ y_{1} \nsim u $. Let $ x_{1} \in X^{'} $. Then the induced  subgraph formed by the vertices $ \{y_1, x_1, u, v \} $ is isomorphic to $ P_4 $. Then by an argument  similar to above, we can show that $ \mathcal{E}(\Phi)> 2\mu(G) $. Again we get a contradiction. Therefore, $ X^{'} \subset N(v) $ and $ Y^{'} \subset N(u) $.  Consider $ x_3\in X^{'} $ and $ y_3 \in Y^{'} $. Take the induced  subgraph $ H_2 $ formed by the vertices $ \{ u, v, u^{'}, v^{'}, x_3, y_3\} $ which is given in the Figure \ref{fig4} and  of the form shown in Figure \ref{fig2}. Then $ \mu(G)=\mu(H_2)+\mu(G-H_2) $. Also by the Lemma \ref{lm8}, $ \mathcal{E}((H_2, \varphi))> 2\mu(H_2) $. Therefore, by the Lemma \ref{lm7}, $ \mathcal{E}(\Phi) > 2\mu(G) $. Which is a contradiction. Thus $ N(u)=Y $. Therefore, $ G=K_{\frac{n}{2}, \frac{n}{2}} $.
	
\noindent{\bf Claim 2: }  $ \varphi=1 .$\\
Since $ G=K_{\frac{n}{2}, \frac{n}{2}} $, so $ \mu(G)=\frac{n}{2} $. Then $ \Phi=(K_{\frac{n}{2}, \frac{n}{2}}, \varphi) $ with $ \mathcal{E}(\Phi)=n=\mathcal{E}(K_{\frac{n}{2}, \frac{n}{2}}) $. Therefore, by the Theorem \ref{th5}, $ \Phi\sim(K_{\frac{n}{2}, \frac{n}{2}},1) $.
\end{proof}
A \emph{k-walk} (or simply walk) in an undirected graph $ G $ with vertex set $V(G)=\{v_1, v_2, \dots, v_s\}$ is an alternative sequence of vertices and edges. We simply denote $ v_{i_1}-v_{i_2}-\cdots -v_{i_r} $ as a $ r $-walk from the vertex $ v_{i_1} $ to $ v_{i_r} $, where the vertices and edges in this walk may or may not be distinct. We call a walk  $ v_{i_1}-v_{i_2}-\cdots -v_{i_r}$, a \emph{path} if all the edges in this walk are distinct. If  there is a path in between the  vertices $ v_x $ and $ v_y $, then we call $ v_x $ and $ v_y $ is connected and denoted by $ v_x \leftrightarrow v_y $.

Let $ G_1 $ and $ G_2 $ be two undirected graph with  $V(G_1)= \{v_1, v_2, \dots, v_s\} $ and  $V(G_2)= \{u_1, u_2, \dots, u_t \} $. To avoid the confusion, in definition of  Kronecker product, we use the following notation. If $ v_i \sim v_j $, then the undirected edge in between them is denoted by $ v_iv_j $ and the oriented edge from the vertex $ v_i $ to $ v_j $ is denoted by $ (\overrightarrow{v_iv_j}) $. Let $ \Phi=(G_1, \varphi) $ be any $ \mathbb{T} $-gain graph. A \emph{$ \mathbb{T} $-gain Kronecker product} of $ \Phi $ and a simple graph $ G_2 $ is defined as a $ \mathbb{T} $-gain graph, $ \Phi \otimes G_2=(  G_1 \otimes G_2, \psi) $ on an underlying graph $ G_1 \otimes G_2 $ with vertex set $ V(G_1 \otimes G_2 )=\{(v_p,u_q ): p=1, 2, \dots, s \text{, and } q=1, 2, \dots, t \} $ and edge set $ E( G_1 \otimes G_2)=\{ (v_p,u_q)(v_a,u_b): v_p \sim v_a \text{ and } u_q \sim u_b \}$  such that $ \psi( \overrightarrow{(v_p,u_q)(v_a,u_b)} )=\varphi (\overrightarrow{v_pv_a}) $.  The $\mathbb{T}$-gain graph $ \Phi \otimes K_2 $ is called \emph{$ \mathbb{T} $-gain bipartite double}, where $ K_2 $ is a complete graph of $ 2 $ vertices. We illustrate the following example of a $ \mathbb{T} $-gain bipartite double.
\begin{example}
Let $ G $ be a triangle with vertex set $ V(G)=\{ v_1, v_2, v_3\} $ and $ V(K_2)=\{ x,y\} $. Let $ \Phi=(G, \varphi) $ be a $ \mathbb{T} $-gain graph. Then $ \Phi \otimes K_2 $ is a $ \mathbb{T} $-gain bipartite double. See Figure \ref{fig5}.
\end{example}

\begin{figure} [!htb]
\begin{center}
\includegraphics[scale= 0.95]{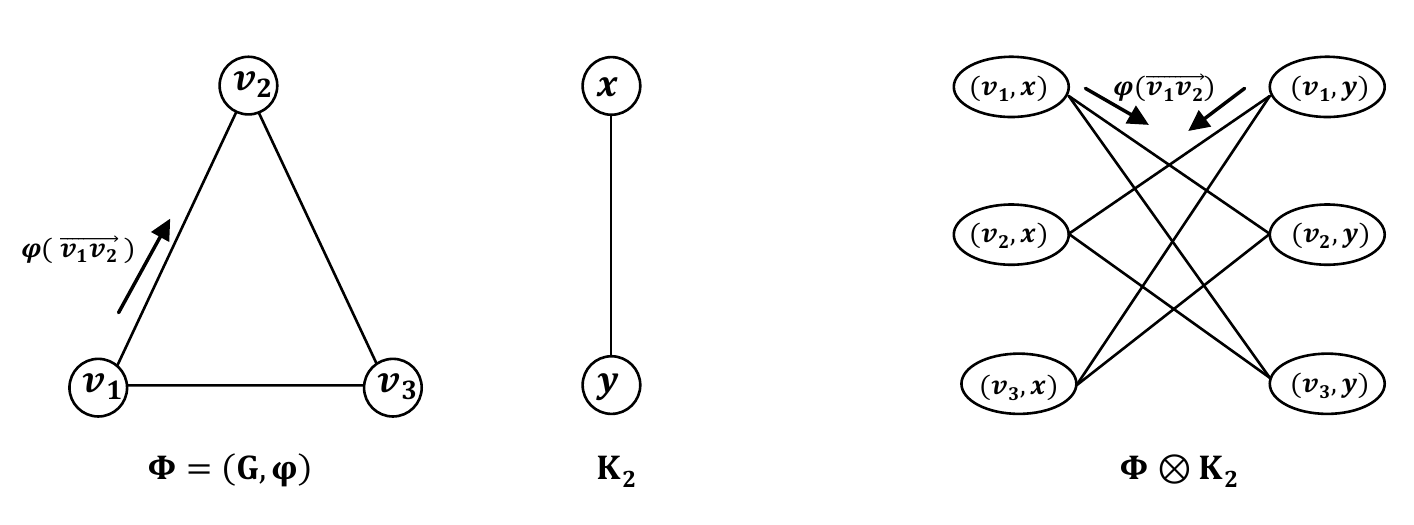}
\caption{ $ \mathbb{T}$-gain bipartite double of $ \Phi $ and $ K_2 $} \label{fig5}
\end{center}
\end{figure}

For any two matrices $ P=(p_{ij})_{r_1 \times r_2} $ and $ Q=(q_{st})_{s_1 \times s_2} $, the Kronecker product of the matrices $P$ and $Q$ are defined as $ P\otimes Q=(p_{ij}Q)_{r_1s_1 \times r_2s_2}$. Now, it is easy to see that  $ A(\Phi \otimes G_2)=A(\Phi) \otimes A(G_2) $.
	
The following lemma is an extension of Lemma \ref{lm.12} for the $\mathbb{T}$-gain graphs.
\begin{lemma} \label{lm10}
Let $ \Phi \otimes G $ be a $ \mathbb{T} $-gain Kronecker product of a $ \mathbb{T} $-gain graph $ \Phi=(G_1, \varphi) $ and an undirected graph $ G $. If $ \spec(\Phi)=\{ \lambda_1, \lambda_2, \dots , \lambda_s\} $ and $ \spec(G)=\{ \gamma_1, \gamma_2, \dots, \gamma_t\} $. Then $ \spec( \Phi \otimes G)=\{\lambda_i\gamma_j: i=1,2, \dots, s, j=1, 2, \dots, t \} $.
\end{lemma}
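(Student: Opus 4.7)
The plan is to reduce the claim to the standard spectral theorem for the Kronecker product of matrices, applied to the identity $A(\Phi \otimes G) = A(\Phi) \otimes A(G)$ stated just before the lemma. The only subtlety compared with the undirected case (Lemma \ref{lm.12}) is that $A(\Phi)$ is Hermitian rather than real symmetric, so the diagonalizing matrix is unitary rather than orthogonal; this changes nothing essential in the computation.

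First I would invoke the spectral theorem for Hermitian matrices to write $A(\Phi) = U D_1 U^{*}$, where $U$ is unitary and $D_1 = \diag(\lambda_1, \dots, \lambda_s)$. Similarly, since $A(G)$ is a real symmetric matrix, I would write $A(G) = V D_2 V^{*}$, where $V$ is (real) orthogonal and $D_2 = \diag(\gamma_1, \dots, \gamma_t)$. Next I would use the mixed-product property of the Kronecker product, namely $(X \otimes Y)(Z \otimes W) = (XZ) \otimes (YW)$ whenever the sizes match, to compute
\begin{equation*}
A(\Phi) \otimes A(G) \;=\; (U D_1 U^{*}) \otimes (V D_2 V^{*}) \;=\; (U \otimes V)(D_1 \otimes D_2)(U \otimes V)^{*}.
\end{equation*}

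Then I would check that $U \otimes V$ is unitary: using the mixed-product property again together with $(X \otimes Y)^{*} = X^{*} \otimes Y^{*}$, one gets $(U \otimes V)(U \otimes V)^{*} = (UU^{*}) \otimes (VV^{*}) = I_s \otimes I_t = I_{st}$. Hence the displayed factorization above is a genuine unitary diagonalization of $A(\Phi \otimes G) = A(\Phi) \otimes A(G)$, so its spectrum is precisely the multiset of diagonal entries of $D_1 \otimes D_2$. A direct expansion of the Kronecker product of the two diagonal matrices shows that these entries are exactly the products $\lambda_i \gamma_j$ for $1 \leq i \leq s$ and $1 \leq j \leq t$, which proves the claim. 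There is no real obstacle here; the whole argument is a one-line extension of the classical Kronecker-product spectral identity, once the Hermitian spectral theorem is used in place of the real symmetric one.
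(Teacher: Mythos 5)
Your proof is correct, and the paper in fact gives no proof of this lemma at all --- it simply asserts it as ``an extension of Lemma \ref{lm.12}'' --- so your argument supplies exactly the routine verification the authors omitted: unitary diagonalization of the Hermitian factor, orthogonal diagonalization of the symmetric factor, and the mixed-product property applied to $A(\Phi\otimes G)=A(\Phi)\otimes A(G)$. Nothing is missing.
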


	  The following lemma is an extension of Lemma \ref{lm.11} for the $\mathbb{T}$-gain graphs.
\begin{lemma} \label{lm15}
If $ \Phi=(G, \varphi) $ be any connected $ \mathbb{T} $-gain graph on a non bipartite graph $ G $, then $ \mathcal{E}(\Phi)> 2\mu(G) $.
\end{lemma}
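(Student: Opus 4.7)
The plan is to reduce the non-bipartite case to the bipartite case (already handled by Theorem \ref{th6}) by passing to the $\mathbb{T}$-gain bipartite double $\Psi := \Phi \otimes K_2$ on the underlying graph $H := G \otimes K_2$. The two standing facts I need are: (i) since $G$ is connected and non-bipartite, $H$ is connected and bipartite (a standard property of bipartite double covers: odd closed walks in $G$ swap the two sheets, so connectedness plus an odd cycle is exactly what is needed to connect the lift); and (ii) Lemma \ref{lm10} gives $\spec(\Psi) = \{\pm \lambda_i : \lambda_i \in \spec(\Phi)\}$, so $\mathcal{E}(\Psi) = 2\,\mathcal{E}(\Phi)$. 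I also note that any matching $M$ of $G$ lifts to the matching $\{((u,1),(v,2)),\,((u,2),(v,1)) : uv \in M\}$ in $H$, which gives $\mu(H) \geq 2\mu(G)$.

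Now I would argue by contradiction. Suppose $\mathcal{E}(\Phi) = 2\mu(G)$. Then $\mathcal{E}(\Psi) = 4\mu(G)$, and applying Theorem \ref{th4} to $\Psi$ together with the matching inequality gives the squeeze
\begin{equation*}
4\mu(G) \;=\; \mathcal{E}(\Psi) \;\geq\; 2\mu(H) \;\geq\; 4\mu(G),
\end{equation*}
so $\mu(H) = 2\mu(G)$ and $\mathcal{E}(\Psi) = 2\mu(H)$. Since $\Psi$ is a connected bipartite $\mathbb{T}$-gain graph attaining equality in Theorem \ref{th4}, the characterization in Theorem \ref{th6} forces $\Psi \sim (K_{n,n},1)$ where $n = |V(G)|$; in particular the underlying graph $H = G \otimes K_2$ must be isomorphic to $K_{n,n}$.

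The main obstacle, and the only remaining step, is ruling out $G \otimes K_2 \cong K_{n,n}$. I would use a direct degree count: in $G \otimes K_2$ the vertex $(u,k)$ has degree equal to $\deg_G(u)$, while every vertex of $K_{n,n}$ has degree $n$. Hence $\deg_G(u) = n$ for every $u \in V(G)$, which is impossible in a simple graph on $n$ vertices (where $\deg_G(u) \leq n-1$). This contradiction closes the argument and proves $\mathcal{E}(\Phi) > 2\mu(G)$. The delicate point is simply making sure the bipartite double cover is indeed connected and bipartite in the non-bipartite case, so that Theorem \ref{th6} is legitimately applicable; everything else is a short chain of known inequalities.
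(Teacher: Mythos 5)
Your proof is correct and follows essentially the same route as the paper: pass to the bipartite double $\Phi\otimes K_2$, verify it is connected and bipartite, apply the bipartite equality characterization (Theorem \ref{th6}) to force $\Phi\otimes K_2\sim(K_{n,n},1)$, and derive a contradiction from the structure of $K_{n,n}$. The only cosmetic differences are that the paper first invokes Lemma \ref{lm4} to obtain a perfect matching of $G$ (so that $\mu(G\otimes K_2)=n$ exactly) where you use the squeeze $\mathcal{E}(\Phi\otimes K_2)\geq 2\mu(G\otimes K_2)\geq 4\mu(G)$, and the paper's final contradiction is an edge count ($m=n^2/2>n(n-1)/2$) rather than your degree count.
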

\begin{proof}
Let $ \Phi=(G, \varphi) $ be a connected $ \mathbb{T} $-gain graph on a non bipartite graph $ G $ with vertex set $ V(G)=\{v_1, v_2, \dots, v_n \} $ and $ m $ edges. If possible let $ \mathcal{E}(\Phi)=2\mu(G) $. Then, by   Lemma \ref{lm4}, $ G $ has perfect matching, say $ M $. Let $v_iv_j  $  denote the edge between the vertices $ v_i $ and $ v_j $, if it exists. Let $ M=\{ v_1v_2, v_3v_4, \dots, v_{n-1}v_n \} $ be a perfect matching in $G$.  Then $ \mu(G)=|M|=\frac{n}{2} $. Therefore, $ \mathcal{E}(\Phi)=2\mu(G)=n $. Let $ V(K_2)=\{ x,y\}$. Now we consider a $ \mathbb{T} $-gain Kronecker product $ \Phi \otimes K_2 $. Then $ |V(\Phi \otimes K_2)|=2n $ and $ |E(\Phi \otimes K_2)|=2m$. It is  easy to see that $ \Phi \otimes K_2 $ is a bipartite graph with a perfect matching $\{(v_1,x)(v_2,y), (v_1,y)(v_2,x), \dots,
 (v_{n-1},x)(v_n,y), (v_{n-1}, y)(v_n,x) \} $. Then $ \mu(\Phi \otimes K_2)=n $. Now, by  Lemma \ref{lm10}, $ \mathcal{E}(\Phi \otimes K_2)=2\mathcal{E}(\Phi)=2n=2\mu(\Phi \otimes K_2)$. That is, $ \mathcal{E}(\Phi \otimes K_2)=2\mu(\Phi \otimes K_2)$. \\
 \textbf{Claim:} $ \Phi \otimes K_2 $ is connected.\\
 The vertex set of $ \Phi\otimes K_2 $ is $ V(\Phi \otimes K_2)=\{ (v_1, x), (v_2, x), \dots , (v_n, x), (v_1, y), (v_2, y), \dots, (v_n,y)\} $. Since $ G $ is connected, so for any pair of vertices $ v_i $ and $ v_j $, there is a path in between them, $ v_{i}=v_{i_0}-v_{i_1}-\cdots-v_{i_t}=v_{j} $, (say). Now $ v_i $ and $ v_j $ is corresponds with the four vertices, $ S=\{(v_i,x), (v_i, y), (v_j,x), (v_j, y) \} $ in $ V(\Phi \otimes K_2) $. We show that any pair of two vertices in that four vertices set is connected. If $ t $ is even, then we have two paths in $ (\Phi\otimes K_2) $, $ (v_{i}, x)=(v_{i_0}, x)-(v_{i_1},y)-\cdots-(v_{i_t},x)=(v_{j},x) $ and $ (v_{i}, y)=(v_{i_0}, y)-(v_{i_1},x)-\cdots-(v_{i_t},y)=(v_{j},y) $. Thus $(v_{i}, x) \leftrightarrow(v_{j},x)$ and $(v_{i}, y) \leftrightarrow(v_{j},y)$. If $ t $ is odd then similarly, $(v_{i}, x) \leftrightarrow(v_{j},y)$ and $(v_{i}, y) \leftrightarrow(v_{j},x)$. Therefore, it is enough to show that $(v_{i}, x) \leftrightarrow(v_{i},y)$. Since $ G $ is connected non bipartite graph, so we can always find a walk from $ v_i $ to $ v_i $ of odd length (walk travels an odd cycle). Then similar to above, $ (v_i,x)\leftrightarrow (v_i,y) $. Therefore, $ (v_i, x) $ is connected with other three vertices of $ S $. Since $ v_i $ and $ v_j $ are arbitrary pair of vertices of $ G $, so any two vertices of $ \Phi\otimes K_2 $ are connected. Thus $ \Phi \otimes K_2 $ is connected.

 Since $ \Phi \otimes K_2  $ is a connected bipartite $ \mathbb{T} $-gain graph of $ 2n $ vertices with $ \mathcal{E}(\Phi \otimes K_2)=2\mu(\Phi \otimes K_2)$, so by Lemma \ref{lm10}, $ \Phi \otimes K_2 \sim (K_{n,n}, 1)  $. Thus $ 2m = |K_{n,n}|=n^{2}$. That is, $ |E(G)|=m=\frac{n^{2}}{2}>\frac{n(n-1)}{2}=|E(K_n)| $. Which is a contradiction. Hence the result.
\end{proof}

\begin{lemma} \label{lm17}
Let $ \Phi=(G, \varphi) $ be any connected $ \mathbb{T} $-gain graph on $ n $ vertices with the matching number $ \mu(G) $. If $ \mathcal{E}(\Phi)=2\mu(G) $, then $ \Phi\sim(K_{\frac{n}{2}, \frac{n}{2}}, 1) $.
\end{lemma}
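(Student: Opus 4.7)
The plan is to combine the two preceding results, Lemma \ref{lm15} and Theorem \ref{th6}, which together almost immediately yield the conclusion. The statement hypothesizes equality $\mathcal{E}(\Phi) = 2\mu(G)$ on a connected $\mathbb{T}$-gain graph, and the two prior results handle the bipartite and non-bipartite cases separately.

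First, I would rule out the non-bipartite case. By Lemma \ref{lm15}, if the underlying connected graph $G$ were non-bipartite, then $\mathcal{E}(\Phi) > 2\mu(G)$, contradicting the hypothesis $\mathcal{E}(\Phi) = 2\mu(G)$. Hence $G$ must be bipartite.

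Now that $G$ is connected bipartite with $\mathcal{E}(\Phi) = 2\mu(G)$, Lemma \ref{lm4} applies and gives $G$ a perfect matching, so $n$ is even and $\mu(G) = n/2$. In particular $\mathcal{E}(\Phi) = n$. Applying Theorem \ref{th6} directly to the connected bipartite $\mathbb{T}$-gain graph $\Phi$ with $\mathcal{E}(\Phi) = 2\mu(G)$ yields $\Phi \sim (K_{n/2,\,n/2}, 1)$, which is the desired conclusion.

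There is essentially no technical obstacle here: the result is a clean corollary that simply bundles the bipartite characterization (Theorem \ref{th6}) with the strict inequality in the non-bipartite case (Lemma \ref{lm15}) under a common roof. The only thing to be careful about is to make sure the hypotheses of both invoked results are met (connectedness of $G$ is given, and Lemma \ref{lm4} is used implicitly to guarantee that $n$ is even so that the statement $\Phi \sim (K_{n/2, n/2}, 1)$ makes sense). No new combinatorial or spectral argument is required beyond citing these two results in sequence.
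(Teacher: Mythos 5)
Your proposal is correct and follows exactly the same route as the paper: invoke Lemma \ref{lm15} to rule out the non-bipartite case, then apply Theorem \ref{th6} to the resulting connected bipartite graph. The extra remark about Lemma \ref{lm4} guaranteeing that $n$ is even is a harmless (and reasonable) addition that the paper leaves implicit.
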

\begin{proof}
Since $ \mathcal{E}(\Phi)=2\mu(G) $, so by the Lemma \ref{lm15}, $ G $ must be bipartite. Therefore, $ \Phi=(G, \varphi) $ is a connected bipartite $ \mathbb{T} $-gain graph. Now, applying the Theorem \ref{th6} we have $ \Phi \sim (K_{\frac{n}{2}, \frac{n}{2}}, 1) $.
\end{proof}
Let $ A_1, A_2, \dots, A_t $ be $ t $ square complex matrices. Then we denote $ A_1\oplus A_2 \oplus \cdots \oplus A_t $ as a block diagonal matrix with diagonal blocks are $ A_1, A_2, \dots, A_t $. That is, $\bigoplus\limits_{j=1}^{t}A_j=diag(A_1, A_2, \dots, A_t) $.

	In the next theorem, we characterize the class of $ \mathbb{T} $- gain graphs for which equality holds in Theorem \ref{th4}.
\begin{theorem}\label{th4(ii)}
Let $ \Phi=(G, \varphi) $ be any $ \mathbb{T} $-gain graph with matching number $ \mu(G) $. Then $ \mathcal{E}(\Phi)=2\mu(G)$ if and only if  each component of $ \Phi $ is a balanced  complete bipartite $ \mathbb{T} $-gain graph with a perfect matching together with some isolated vertices.
\end{theorem}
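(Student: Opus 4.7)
The plan is to reduce the statement to the connected case, which has already been established in Lemma \ref{lm17}, and then handle the disconnected case by a clean component-wise argument.

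First, I would dispose of the easy sufficiency direction. Suppose every component of $\Phi$ is either an isolated vertex or switching equivalent to $(K_{p,p},1)$ for some $p$. Each isolated-vertex component contributes $0$ to both $\mathcal{E}(\Phi)$ and $2\mu(G)$, while each $(K_{p,p},1)$ component has energy $2p$ (since $\mathcal{E}(K_{p,p})=2p$ and gain energy is switching invariant) and matching number $p$. Summing over components gives $\mathcal{E}(\Phi)=2\mu(G)$.

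For the necessity direction, the plan is a component-wise collapse. Let the connected components of $G$ be $G_1,G_2,\ldots,G_k$, and write $\Phi_i=(G_i,\varphi)$. Two simple additivity facts apply: $\mathcal{E}(\Phi)=\sum_{i=1}^{k}\mathcal{E}(\Phi_i)$ (since $A(\Phi)$ is block diagonal after relabeling) and $\mu(G)=\sum_{i=1}^{k}\mu(G_i)$. By Theorem \ref{th4} applied to each component, $\mathcal{E}(\Phi_i)\geq 2\mu(G_i)$ for every $i$. If the total inequality is actually an equality, then each component-wise inequality must be an equality, i.e., $\mathcal{E}(\Phi_i)=2\mu(G_i)$ for all $i$.

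Now I would invoke Lemma \ref{lm17} on each component. For any component $\Phi_i$ that contains at least one edge (so it is a connected $\mathbb{T}$-gain graph with $\mathcal{E}(\Phi_i)=2\mu(G_i)$), Lemma \ref{lm17} forces $\Phi_i\sim\bigl(K_{n_i/2,\,n_i/2},1\bigr)$, where $n_i=|V(G_i)|$; in particular $G_i$ is a balanced complete bipartite graph with a perfect matching. The remaining components must satisfy $\mu(G_i)=0$, which for a connected graph means $G_i$ is a single isolated vertex. This yields exactly the required structural description.

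Since all the heavy lifting—namely the connected characterization, the Kronecker-doubling trick used for non-bipartite components, and the bipartite structural analysis—is already packaged inside Lemma \ref{lm15}, Theorem \ref{th6}, and Lemma \ref{lm17}, I do not anticipate any real obstacle in this proof; the only thing to be careful about is to state cleanly the additivity of both $\mathcal{E}$ and $\mu$ over components and to observe that equality in a sum of nonnegative inequalities forces term-wise equality.
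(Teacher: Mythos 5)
Your proposal is correct and follows essentially the same route as the paper: decompose $A(\Phi)$ into blocks over the connected components, use Theorem \ref{th4} on each component so that equality in the sum forces $\mathcal{E}((G_j,\varphi))=2\mu(G_j)$ termwise, and then invoke Lemma \ref{lm17} on each nontrivial component. The only difference is that you spell out the easy sufficiency direction, which the paper leaves implicit.
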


\begin{proof}
Let $ G_1,G_2, \dots,G_p, G_{p+1}, \dots, G_{p+r} $ be the connected components of $ G $. Without loss of generality, let us assume that the last $ r $  components are the only isolated vertices. Then $ \mu(G)= \mu(G_1)+\dots+\mu(G_p) $. It is clear that $ A(\Phi)=\bigoplus\limits_{j=1}^{p+r}A((G_j, \varphi)) $
, So $ \mathcal{E}(\Phi)=\sum\limits_{j=1}^{p}\mathcal{E}((G_j, \varphi)) $. Therefore, by the Theorem \ref{th4}, we have,
\begin{equation}
2\mu(G)=\mathcal{E}(\Phi)=\sum\limits_{j=1}^{p}\mathcal{E}((G_j, \varphi)) \geq 2\sum\limits_{j=1}^{p}\mu(G_j)=2\mu(G)
\end{equation}
Thus $ \mathcal{E}((G_j, \varphi))=2\mu(G_j)$, for each $ j=1,2, \dots , s $. Now, using the Lemma \ref{lm17}, we can derive the result.
\end{proof}

As an application of the above theorem, we can establish a relationship among the energy of $ \mathbb{T} $-gain graph, the  vertex cover number and the number of odd cycles.  This result generalizes one of the main results of  \cite{Wei-Li}. Let $ \Phi=(G, \varphi) $ be a $ \mathbb{T} $-gain graph with vertex set $ V(G)$. Let $ u\in  V(G) $. Then $ (\Phi-u) $ denotes an induced subgraph of $ \Phi $ with vertex set $ V(G)\setminus\{u\} $.

\begin{theorem} \label{th8}
Let $ \Phi=(G, \varphi) $ be any $ \mathbb{T} $-gain graph on $ G $ with $ c(G) $ number of odd cycles and  vertex cover number $ \tau(G) $. Then
\begin{equation*}
\mathcal{E}(\Phi) \geq 2\tau(G)-2c(G).
\end{equation*}
Equality occurs if and only if each component  of $ \Phi$ is a balanced complete bipartite $ \mathbb{T} $-gain graph with a perfect matching together with some isolated vertices.
\end{theorem}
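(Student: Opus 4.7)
The plan is to proceed by induction on the number of odd cycles $c(G)$, combining the vertex-deletion monotonicity of energy furnished by Theorem \ref{lm12} with the matching bound of Theorem \ref{th4} and its equality characterisation Theorem \ref{th4(ii)}.

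For the base case $c(G)=0$, the graph $G$ is bipartite, hence by K\"onig's theorem $\tau(G)=\mu(G)$, and Theorem \ref{th4} immediately yields $\mathcal{E}(\Phi)\geq 2\mu(G)=2\tau(G)-2c(G)$. For the inductive step, assume $c(G)\geq 1$ and that the inequality has been proved for every $\mathbb{T}$-gain graph whose underlying graph has fewer than $c(G)$ odd cycles. Pick an odd cycle $C$ of $G$ and a vertex $v\in V(C)$. Partitioning $V(G)$ as $(V(G)\setminus\{v\})\cup\{v\}$ writes $A(\Phi)$ as a $2\times 2$ Hermitian block matrix whose top-left block is $A(\Phi-v)$, so Theorem \ref{lm12} gives
\[
\mathcal{E}(\Phi)\;\geq\;\mathcal{E}(\Phi-v).
\]
By the inductive hypothesis, $\mathcal{E}(\Phi-v)\geq 2\tau(G-v)-2c(G-v)$. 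Since adjoining $v$ to any vertex cover of $G-v$ produces a vertex cover of $G$, one has $\tau(G-v)\geq \tau(G)-1$; since the odd cycle $C$ through $v$ is destroyed, one has $c(G-v)\leq c(G)-1$. Chaining these estimates gives $\mathcal{E}(\Phi)\geq 2\tau(G)-2c(G)$.

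For the equality case, the same induction also establishes the purely combinatorial bound $\tau(G)-c(G)\leq\mu(G)$: the base case is immediate, and the inductive step uses
\[
\tau(G)-c(G)\;\leq\;\tau(G-v)-c(G-v)\;\leq\;\mu(G-v)\;\leq\;\mu(G).
\]
Therefore, if $\mathcal{E}(\Phi)=2\tau(G)-2c(G)$, Theorem \ref{th4} forces
\[
2\mu(G)\;\leq\;\mathcal{E}(\Phi)\;=\;2\tau(G)-2c(G)\;\leq\;2\mu(G),
\]
so $\mathcal{E}(\Phi)=2\mu(G)$. Applying Theorem \ref{th4(ii)} then yields that every component of $\Phi$ is a balanced complete bipartite $\mathbb{T}$-gain graph with a perfect matching, together with some isolated vertices. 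The converse is immediate, since for such $\Phi$ the graph $G$ is bipartite (so $c(G)=0$) and $\tau(G)=\mu(G)$, giving $\mathcal{E}(\Phi)=2\mu(G)=2\tau(G)-2c(G)$.

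The main obstacle is not the core inequality chain but the bookkeeping linking $\tau$, $\mu$, and $c$ under vertex deletion; once one observes that a single vertex removal on an odd cycle shrinks $\tau$ by at most $1$ while shrinking $c$ by at least $1$, the energy inequality and the combinatorial inequality $\tau-c\leq\mu$ emerge from the same induction, and the equality case reduces cleanly to the already settled $\mathcal{E}(\Phi)=2\mu(G)$ scenario of Theorem \ref{th4(ii)}.
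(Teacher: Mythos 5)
Your proposal is correct, and the core inequality is proved by exactly the paper's induction: induct on $c(G)$, handle the bipartite base case via K\"onig and Theorem \ref{th4}, and in the inductive step delete a vertex $v$ on an odd cycle, using $\tau(G-v)\geq\tau(G)-1$ and $c(G-v)\leq c(G)-1$. Where you diverge is the equality analysis, and the divergence is worth noting. The paper uses the \emph{strict} deletion inequality $\mathcal{E}(\Phi)>\mathcal{E}(\Phi-u)$ (its Lemma \ref{lm13}, the cut-set lemma, applied to the star of edges at $u$) to conclude that any graph with $c(G)\geq 1$ satisfies the bound strictly, forcing $c(G)=0$ at equality and then reducing to the $\mathcal{E}(\Phi)=2\mu(G)$ characterization. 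You instead use only the non-strict principal-submatrix monotonicity of Theorem \ref{lm12} and compensate by running the same induction a second time to extract the purely combinatorial inequality $\tau(G)-c(G)\leq\mu(G)$, which sandwiches $\mathcal{E}(\Phi)$ between $2\mu(G)$ and itself and again reduces to Theorem \ref{th4(ii)}. Both routes are valid; yours avoids Lemma \ref{lm13} entirely (and incidentally sidesteps the paper's slightly garbled justification of strictness there, where $u$ is described as an isolated vertex), at the mild cost of not recording the strictness statement for $c(G)\geq 1$, which the paper gets for free. Note also that the paper cites its connected-bipartite result (Theorem \ref{th6}) at the final step where the component-wise Theorem \ref{th4(ii)} is really what is needed; your citation of Theorem \ref{th4(ii)} is the cleaner one.
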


\begin{proof}
Let $ \Phi=(G, \varphi) $ be any $ \mathbb{T} $-gain graph with $ c(G) $ number of odd cycles. Let us prove the bound using  induction on the number of odd cycles $ c(G) $. If $ c(G)=0 $, then $ G $ is bipartite. Therefore $ \mu(G)=\tau(G) $. Now, by  Theorem \ref{th4}, we have $ \mathcal{E}(\Phi)\geq 2\mu(G)=2\tau(G)-2c(G) $.  Assume that the statement is true for any  $ \mathbb{T} $-gain graph  with the number of odd cycles is at most $ (c(G)-1) $. Consider $ \Phi $ with $ c(G) \geq 1 $ number of odd cycles. Let $u $ be a vertex in an odd cycle of $ G $. Then the number of odd cycles, say $ c^{'} $, of $ \Phi-u $ is at most $ (c(G)-1) $. Thus, by induction hypothesis, $ \mathcal{E}(\Phi-u)\geq 2\tau(G-u)-2c^{'} $. Since $ u $ is an isolated vertex, so, by  Lemma \ref{lm13}, $ \mathcal{E}(\Phi)>\mathcal{E}(\Phi-u)$.

It is easy to see that $\tau(G-u) \geq \tau(G)-1 $. Therefore, $ \mathcal{E}(\Phi)>\mathcal{E}(\Phi-u)\geq 2\tau(G)-2c(G) $.

Now, let  $ \mathcal{E}(\Phi)=2\tau(G)-2c(G) $. If $ c(G) \geq 1 $, then, by the above observation, $ \mathcal{E}(\Phi)> 2\tau(G)-2c(G) $. which is a contradiction. That is $ c(G)=0 $. Therefore, $ G $ is bipartite and $ \mu(G)=\tau(G) $. Thus $ \mathcal{E}(\Phi)=2\mu(G) $. Now, by  Theorem \ref{th6}, $ \Phi $ is the disjoint union of some balanced complete bipartite $ \mathbb{T} $-gain graphs with a perfect matching together with some isolated vertices.
\end{proof}
	\section {Upper bound of energy of $ \mathbb{T} $-gain graph in terms of vertex cover number and largest vertex degree}\label{upper-bound}
	
		In this section, our main objective is to obtain an upper bound for the energy of a  $ \mathbb{T} $-gain graph in terms of the vertex cover number and the largest vertex degree. This result is the counter part of the corresponding known result about undirected graph [Theorem \ref{th-1.3}] and mixed graph [Theorem \ref{th-1.4}]. Furthermore, we characterize all $\mathbb{T}$-gain graphs for which the upper bound is attained. This characterization completely solve one of the open problem  \cite{Wei-Li}.

	\begin{theorem} \label{th-5.3}
	   		Let $ \Phi=(G, \varphi) $ be any $ \mathbb{T} $-gain graph
	   		with the vertex cover number $ \tau(G) $, and maximum vertex degree $
	   		\Delta (G)$. Then,
	   		\begin{equation}\label{2-eq.2}
	   		\mathcal{E}(\Phi)\leq 2 \tau(G) \sqrt{\Delta(G)}.
	   		\end{equation}
	\end{theorem}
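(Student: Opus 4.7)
The plan is to write $A(\Phi)$ as a sum of $\tau(G)$ rank-$2$ Hermitian matrices, each corresponding to a ``gain star'' centered at a vertex of a minimum vertex cover, and then invoke the singular-value subadditivity of Theorem \ref{lm2.15}.

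First I would fix a minimum vertex cover $U=\{v_1,\dots,v_{\tau}\}$ of $G$. Since every edge of $G$ has at least one endpoint in $U$, I assign each edge $e$ to a chosen endpoint of $e$ lying in $U$ (breaking ties arbitrarily when both endpoints are in $U$); this yields a partition $E(G)=E_1\sqcup E_2\sqcup\cdots\sqcup E_{\tau}$, where $E_i$ is the set of edges assigned to $v_i$. Writing $d_i:=|E_i|$, every edge in $E_i$ is incident to $v_i$, so $d_i\le d(v_i)\le\Delta(G)$.

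Next, let $A_i$ be the $n\times n$ Hermitian matrix whose $(p,q)$-entry equals $\varphi(\overrightarrow{e_{p,q}})$ if $\{v_p,v_q\}\in E_i$, and is zero otherwise. The edge partition immediately gives $A(\Phi)=\sum_{i=1}^{\tau}A_i$. Each $A_i$ has nonzero entries only in row and column $v_i$, so its restriction to the principal submatrix indexed by $v_i$ together with the $d_i$ vertices joined to it in $E_i$ takes the form $\left(\begin{smallmatrix}0 & w_i^{*}\\ w_i & 0\end{smallmatrix}\right)$ for some $w_i\in\mathbb{T}^{d_i}$. The gain hypothesis is used precisely here: all entries of $w_i$ have modulus one, so $\|w_i\|_{2}=\sqrt{d_i}$, and hence the only nonzero eigenvalues of $A_i$ are $\pm\sqrt{d_i}$, giving $\mathcal{E}(A_i)=2\sqrt{d_i}$.

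Finally, applying Theorem \ref{lm2.15} inductively to $A(\Phi)=\sum_{i=1}^{\tau}A_i$, I would obtain
\[
\mathcal{E}(\Phi)=\sum_{p}S_{p}(A(\Phi))\le\sum_{i=1}^{\tau}\sum_{p}S_{p}(A_i)=\sum_{i=1}^{\tau}2\sqrt{d_i}\le 2\tau(G)\sqrt{\Delta(G)},
\]
which is the desired bound. I do not anticipate a serious obstacle: the only nontrivial ingredient is packaging $A(\Phi)$ into stars whose energies are transparent, and the modulus-one hypothesis is precisely what makes each star's energy collapse to $2\sqrt{d_i}$ (without it, one would only have $2\|w_i\|_2$ with no uniform control by $\sqrt{\Delta(G)}$).
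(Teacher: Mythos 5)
Your proof is correct and is essentially the paper's argument in unrolled form: the paper proceeds by induction on $\tau(G)$, peeling off one star $(K_{1,d(x)},\varphi)$ at a cover vertex $x$ per step and applying the same singular-value subadditivity (Theorem \ref{lm2.15}), which amounts to exactly your partition of $E(G)$ into $\tau(G)$ gain stars each of energy $2\sqrt{d_i}\le 2\sqrt{\Delta(G)}$. The only difference is presentational — you make the star decomposition explicit and global rather than recursive.
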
	   		
	\begin{proof}
	Let $ \Phi=(G, \varphi) $ be any $ \mathbb{T} $-gain graph with vertex cover number $ \tau(G) $.   We prove the result by  induction on $ \tau(G) $. If $ \tau(G)=1$, then $ G$ must be $ K_{1,r} $, for some $ r $ together with some isolated vertices. Therefore, $ \Phi $  is balanced. Now $ \mathcal{E}(\Phi)=\mathcal{E}(K_{1, r})=2\sqrt{r}=2\tau(G)\sqrt{\Delta(K_{1,r})} $.
	
	Let us assume that for any $ \mathbb{T} $-gain graph $\Psi= (G_1, \psi ) $ with $ \tau(G_1)< \tau(G) $, we have $ \mathcal{E}(\Psi)\leq 2\tau(G_1)\sqrt{\Delta(G_1)} $. Let $ U $ be a minimum vertex cover of $ G $. Then $ |U|=\tau(G) \geq 2$. Let $ x \in U $. Let $ S $ be an induced subgraph of $ G $ which is formed by removing the vertex $ x $, and the edges incident with $x $ from $ G $. That is $ S=G-x $. Then $ \tau(S)=\tau(G)-1 $. Therefore, by the induction hypothesis, $ \mathcal{E}(\Phi-x)=\mathcal{E}((S, \varphi))\leq 2 \tau(S)\sqrt{\Delta(S)}$. After a suitable relabeling of vertices, we can express $ A(\Phi) $ as
	\begin{equation*}
	A(\Phi)=\left[ \begin{array}{ccc}
	0& {\textbf v^{*}} & {\textbf 0} \\ {\textbf v} & A_1 & Y^{*}\\ {\textbf 0}& Y & A_2 \end{array} \right]=\left[ \begin{array}{ccc}
	0& {\textbf v^{*}} & {\textbf 0} \\ {\textbf v} & {\textbf 0} & {\textbf 0}\\ {\textbf 0}& {\textbf 0} & {\textbf 0} \end{array} \right]+\left[ \begin{array}{ccc}
	0& {\textbf 0} & {\textbf 0} \\ {\textbf 0} & A_1 & Y^{*}\\ {\textbf 0}& Y & A_2 \end{array} \right]
	\end{equation*}
	Here the first column and the first row are associated with the vertex $ x $. Let the degree of $ x $ be $ d $. Then $\left[ \begin{array}{cc}
	0& {\textbf v^{*}}  \\ {\textbf v} & {\textbf 0} \end{array} \right]  $ and $ \left[ \begin{array}{cc}
	 A_1 & Y^{*}\\ Y & A_2 \end{array} \right] $  are the adjacency matrices of the $ \mathbb{T} $-gain subgraphs $ (K_{1, d}, \varphi) $ and $ (S, \varphi) $, respectively. By Theorem \ref{lm2.15}, 
	  we have
	 \begin{equation}\label{2-eq.4}
	 \mathcal{E}(\Phi)\leq \mathcal{E}(K_{1, d})+\mathcal{E}((S, \varphi)) \leq 2\sqrt{d}+2 \tau(S)\sqrt{\Delta(S)}\leq 2\tau(G)\sqrt{\Delta(G)}.
	 \end{equation}
	 \end{proof}

    \begin{theorem} \label{th-5.4}
		Let $ \Phi=(G, \varphi) $ be any $ \mathbb{T} $-gain graph on $ G $
		with  vertex cover number $ \tau(G) $
	 and maximum vertex degree $
		\Delta(G) $. Then
		\begin{eqnarray}
		\mathcal{E}(\Phi)= 2 \tau(G) \sqrt{\Delta(G)}
		\end{eqnarray}
		if and only if $ \Phi $ is the disjoint union of $ \tau(G) $ copies of
		balanced  $ \mathbb{T} $-gain graph $ (K_{1, \Delta(G)}, 1) $ together
		with some isolated vertices.
   \end{theorem}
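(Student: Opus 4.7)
The plan is to establish the equality characterization by induction on the vertex cover number $\tau(G)$, running in parallel with the proof of Theorem~\ref{th-5.3}. The ``if'' direction is a direct computation: a balanced star $(K_{1,\Delta(G)},1)$ has adjacency spectrum $\{\pm\sqrt{\Delta(G)},0,\ldots,0\}$ and hence energy $2\sqrt{\Delta(G)}$, while its vertex cover number is $1$; so a disjoint union of $\tau(G)$ such stars together with isolated vertices realizes energy $2\tau(G)\sqrt{\Delta(G)}$ and vertex cover number $\tau(G)$.

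For the ``only if'' direction I would first reduce to the connected case. Writing $\Phi=\bigsqcup_j\Phi_j$ and applying Theorem~\ref{th-5.3} componentwise, the assumed identity forces $\mathcal{E}(\Phi_j)=2\tau(G_j)\sqrt{\Delta(G_j)}$ with $\Delta(G_j)=\Delta(G)$ on every nontrivial component, so it suffices to show that a connected $\Phi$ with $\mathcal{E}(\Phi)=2\tau(G)\sqrt{\Delta(G)}$ must have $\tau(G)=1$ (in which case $G$ is a star and, being a tree, $\Phi\sim(K_{1,\Delta(G)},1)$). Induct on $\tau(G)$. The base $\tau(G)=1$ is immediate. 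For the inductive step, pick $x$ in a minimum vertex cover $U$, set $S=G-x$ and $d=d_G(x)$, and note that $\tau(S)=\tau(G)-1$. Retracing the chain
\[
\mathcal{E}(\Phi)\le 2\sqrt{d}+\mathcal{E}((S,\varphi))\le 2\sqrt{d}+2\tau(S)\sqrt{\Delta(S)}\le 2\tau(G)\sqrt{\Delta(G)}
\]
from the proof of Theorem~\ref{th-5.3}, equality throughout forces, by arithmetic on the last step, $d=\Delta(G)=\Delta(S)$, and from the second step $\mathcal{E}((S,\varphi))=2\tau(S)\sqrt{\Delta(S)}$. The inductive hypothesis then says that $(S,\varphi)$ is a disjoint union of $\tau(S)$ copies of $(K_{1,\Delta(G)},1)$ together with isolated vertices.

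The main obstacle is extracting structural information from equality in the first inequality, i.e.\ equality in Theorem~\ref{lm2.15} applied to $A(\Phi)=K+T$, where $K$ is the adjacency matrix of the gain star $(K_{1,d},\varphi)$ at $x$ and $T$ is the matrix of $(S,\varphi)$ padded with a zero row and column at $x$. For Hermitian summands the singular-value triangle inequality is tight exactly when the positive (resp.\ negative) eigenspaces of $K$ and of $T$ both lie inside the positive (resp.\ negative) eigenspace of $K+T$. Writing the two nonzero eigenvectors of $K$ explicitly in terms of the gain vector $v$ of $x$, the requirement that they be eigenvectors of $A(\Phi)$ reduces to $A((S,\varphi))\,v_{\mathrm{ext}}=0$, where $v_{\mathrm{ext}}$ extends $v$ by zeros outside $N(x)$; reciprocally, every eigenvector $w$ of $A((S,\varphi))$ with nonzero eigenvalue must satisfy $v^{*}w_{N(x)}=0$.

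Combining these two constraints with the very explicit block-diagonal eigenstructure of $A((S,\varphi))$ delivered by the inductive hypothesis (each nonzero block is $A(K_{1,\Delta(G)})$ with the standard center/leaf eigenvectors), I would argue that $v_{\mathrm{ext}}$ must vanish on every center of a star component of $S$, and a separate scrutiny of the center-row equation $\sum_{y\text{ leaf of that star},\;y\in N(x)}\varphi(\overrightarrow{cy})\,\varphi(\overrightarrow{xy})=0$ combined with orthogonality against the $\pm\sqrt{\Delta(G)}$-eigenvectors should eliminate the possibility that $x$ meets any leaf of a star in $S$; hence $N(x)$ consists of $\Delta(G)$ isolated vertices of $S$. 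Then the component of $\Phi$ containing $x$ is a single star $K_{1,\Delta(G)}$, which as a tree is balanced, contradicting $\tau(G)\ge 2$ in the connected case and closing the induction. I expect the subtle point to be precisely this final elimination step for leaves, because the orthogonality conditions by themselves permit cancellations among the gains of multiple edges from $x$ into one star, so additional input (e.g.\ examining simultaneously the eigenvectors of $T$ with eigenvalue $\pm\sqrt{\Delta(G)}$ supported in different star components, together with Lemma~\ref{lm3}'s balance criterion) will likely be needed.
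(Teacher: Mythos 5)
Your ``if'' direction and the reduction to components are fine, and the extraction of $d=\Delta(S)=\Delta(G)$ and $\mathcal{E}((S,\varphi))=2\tau(S)\sqrt{\Delta(S)}$ from equality in the chain of Theorem \ref{th-5.3} matches what the paper does. But the heart of your argument --- deducing structure from equality in Theorem \ref{lm2.15} applied to $A(\Phi)=K+T$ --- is not carried through, and you say so yourself. Two concrete problems: first, your stated equality criterion for the trace-norm triangle inequality with Hermitian summands (``the positive/negative eigenspaces of $K$ and of $T$ lie inside the positive/negative eigenspace of $K+T$'') is asserted, not proved; the correct condition (existence of a single maximizing unitary, equivalently that the span of the positive ranges of $K$ and $T$ is orthogonal to the span of their negative ranges) needs to be established and then unwound. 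Second, even granting it, the ``final elimination step for leaves'' is exactly where the argument must do real work: the orthogonality relations $v^{*}w_{N(x)}=0$ against the $\pm\sqrt{\Delta(G)}$-eigenvectors of the star components of $S$ only constrain certain gain sums to vanish, and as you note, cancellations among unimodular gains can satisfy these without forcing $N(x)$ to avoid the stars. No mechanism is supplied to rule this out, so the induction does not close. As it stands the proposal proves the ``if'' direction and reduces the ``only if'' direction to an unproved claim.

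For comparison, the paper takes a route that sidesteps all eigenvector analysis. From $d=\Delta(G)=\Delta(S)$ for every $x$ in a minimum cover $U$ it deduces that no two vertices of $U$ are adjacent (removing $x$ cannot lower the degree of another cover vertex of full degree), hence $G$ is bipartite with parts $U$ and $V(G)\setminus U$. Then, on each nontrivial component $G_j$ with $\mathcal{E}((G_j,\varphi))=2\tau(G_j)\sqrt{\Delta(G_j)}$, it shows $\rank(G_j,\varphi)=2$ by Cauchy--Schwarz combined with $|E(G_j)|\le\tau(G_j)\Delta(G_j)$ and the rank bound $\rank\le 2\mu=2\tau$ of Lemma \ref{2-lm5}; a rank-$2$ bipartite gain graph has exactly one positive eigenvalue, so Theorem \ref{lem5} gives $(G_j,\varphi)\sim(K_{a,b},1)$, and the arithmetic $2a\sqrt{b}=2\sqrt{ab}$ forces $a=1$, $b=\Delta(G)$. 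If you want to salvage your approach, you would either need to prove and exploit the trace-norm equality condition in full, or, more economically, graft in the paper's rank-$2$ argument at the point where you currently rely on the unproved leaf-elimination step.
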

		  		  	
	 \begin{proof}

	 First let us show that all the vertices of $ U $ have the same vertex degree, $ \Delta(G).$ 
	Let $ x \in U $ be any vertex in $ U $ (as in Theorem \ref{th-5.3}). Since $ \mathcal{E}(\Phi)=2\tau(G)\sqrt{\Delta(G)} $, so all the inequalities of (\ref{2-eq.4})  become equations. So $ \mathcal{E}((S, \varphi)) =2\tau(S)\sqrt{\Delta(S)}$, and $ d=\Delta(S)=\Delta(G) $. As $ x $ is arbitrary, so all the vertices of $ U $ are of degree $ \Delta(G) $. \\
	 Now we claim that the underlying graph $ G $ is bipartite. 
	 Let $ W=V(G)\setminus U $. It is clear that $ U\setminus \{x\} $ is a minimum vertex cover of the induced subgraph $ S $. Also, we have $ \mathcal{E}((S, \varphi)) =2\tau(S)\sqrt{\Delta(S)}$. Now, applying the argument
	
	 to $ S $. Therefore, all the vertices of $ U \setminus \{x\} $ in $ S $  
	 is of degree $ \Delta(S) $. Also we know that $ \Delta(S)=\Delta (G) $. Since $ d=\Delta(G) $, so there is no edge  between the vertex $ x $ and the vertices of $ U \setminus \{x\} $. As $x$ is arbitrary, so we get no two vertices of $ U $ are adjacent.  Now $ U $ is a minimum vertex cover of $ G $, so no two vertices of $ W $ are adjacent. Hence $ G $ is a bipartite graph with vertex partition sets $ U $ and $ W $.

	 Let $ G_1, G_2, \dots, G_p $ be the only nontrivial components of $ G $ (That is components contain at least one edge). Then,
	 $$ 2\tau(G)\sqrt{\Delta(G)}=\mathcal{E}(\Phi)=\sum\limits_{j=1}^{p}\mathcal{E}((G_j, \varphi))\leq\sum\limits_{j=1}^{p}2\tau(G_j)\sqrt{\Delta(G_j)}\leq 2\tau(G)\sqrt{\Delta(G)}.$$
	 From the above expression, we get $ \mathcal{E}((G_j, \varphi)) = 2\tau(G_j)\sqrt{\Delta(G_j)}$ and $ \Delta(G_j)=\Delta(G) $, for $ j=1,2, \dots, p $.
	
	 Now let us show that the rank of each component $ (G_j, \varphi) $ is $ 2 $. 
	  Let $ r_j $ be the rank of $(G_j, \varphi)  $. Since $ (G_j, \varphi) $ is bipartite, so its  spectrum is symmetric with respect to origin. Thus $ r_j$ is an even number and $r_j \geq 2 $. Let $ \lambda_1 \geq \lambda_2\geq \cdots \geq \lambda_{r_j} $ be the nonzero eigenvalues of $ (G_j, \varphi).$ Suppose that $ r_j >2 $. Then $ \lambda_1 > \lambda_2>0 $. Therefore, by the Cauchy-Schwartz inequality
	  \[ \mathcal{E}((G_j, \varphi))=\sum\limits_{t=1}^{j}|\lambda_t|<\sqrt{r_j}\sqrt{\sum\limits_{t=1}^{j}\lambda_t^{2}} =\sqrt{2|E(G_j)|r_j}.\]
	  For any $ \mathbb{T} $-gain graph $ \Psi=(B, \psi) $ on a bipartite graph $ B $, we know that $ |E(B)|\leq\tau(B)\Delta(B) $. By Lemma \ref{2-lm5}, we have $ \rank(\Psi)\leq 2\mu(B)=2\tau(B)$. Hence $ \mathcal{E}((G_j, \varphi))< 2\tau(G_j)\sqrt{\Delta(G_j)} $, a contradiction (as for each component, $ \mathcal{E}((G_j, \varphi))= 2\tau(G_j)\sqrt{\Delta(G_j)} $). Hence the rank of $ (G_j, \varphi)$ is $ 2 $ for $ j=1,2, \cdots, p $.
	
	  Since each nontrivial component $ (G_j, \varphi) $ is bipartite and of rank $ 2 $. Now $ (G_j, \varphi) $ is of rank $ 2 $ if and only if it has exactly one positive eigenvalue. Therefore, by Lemma \ref{lem5}, $ (G_j, \varphi)\sim (K_{a,b}, 1) $.  Without loss of generality, consider $ a\leq b $. Then $ \tau(G_j)=a $ and $ \Delta(G_j)=b $. Now $ \mathcal{E}((G_j, \varphi))=2\tau(G_j)\sqrt{\Delta(G_j)}=2\tau(G_j)\sqrt{\Delta(G_j)}=2a\sqrt{b} $. On the other hand $ \mathcal{E}((G_j, \varphi))= \mathcal{E}((K_{a,b},1))=2\sqrt{ab} $. Thus $2a\sqrt{b}=2\sqrt{ab}  $. Thus $ a=1 $, and hence $ b=\Delta(G_j)=\Delta(G)  $. Therefore, for each $ j=1,2, \dots, p $, $ (G_j, \varphi) \sim (K_{1, \Delta(G)},1) $.
	  \end{proof}


\section*{Acknowledgments}
	Aniruddha Samanta thanks University Grants Commission(UGC)  for the financial support in the form of the Senior Research Fellowship (Ref.No:  19/06/2016(i)EU-V; Roll No. 423206). M. Rajesh Kannan would like to thank the SERB, Department of Science and Technology, India, for financial support through the projects MATRICS (MTR/2018/000986) and Early Career Research Award (ECR/2017/000643).
	
	\bibliographystyle{amsplain}
	\bibliography{raj-ani-ref1}

\end{document}